\documentclass[11pt,a4paper]{amsart}
\usepackage{amsmath,amssymb,color,multicol,setspace}
\usepackage[utf8,utf8x]{inputenc}

\usepackage[pdftitle = {Frieze\ patterns\ over\ integers\ and\ other\ subsets\ of\ the\ complex\ numbers}]{hyperref}
\usepackage{longtable}
\usepackage{xy,amscd}
\usepackage{epsfig}
\usepackage{rotating}
\usepackage{xspace}
\usepackage{enumitem}
\xyoption{all}
\usepackage{tikz}
\usetikzlibrary{arrows,decorations.pathmorphing,decorations.pathreplacing,positioning,shapes.geometric,shapes.misc,decorations.markings,decorations.fractals,calc,patterns}
\usepackage{xcolor}
\usepackage{comment}

\newcommand{\blue}{\color{blue}}
\newcommand{\red}{\color{red}}

\usepackage{fullpage}

\newtheorem{Lemma}{Lemma}[section]
\newtheorem{Theorem}[Lemma]{Theorem}
\newtheorem{Proposition}[Lemma]{Proposition}
\newtheorem{Corollary}[Lemma]{Corollary}

\theoremstyle{definition}
\newtheorem{Definition}[Lemma]{Definition}

\newtheorem{Remark}[Lemma]{Remark}

\newtheorem{Example}[Lemma]{Example}

\numberwithin{equation}{section}

\newcommand{\Q}{\mathbb{Q}}
\newcommand{\Z}{\mathbb{Z}}

\newcommand{\GE}{\operatorname{GE}}

\newcommand{\sps}{\makebox[20pt]{\ }}

\title{Non-commutative frieze patterns over quaternion algebras and other normed division rings}

\author{Michael~Cuntz}
\address{Michael Cuntz, Leibniz Universit\"at Hannover,
Institut f\"ur Algebra, Zahlentheorie und Dis\-krete Mathematik,
Fakult\"at f\"ur Mathematik und Physik,
Welfengarten 1,
D-30167 Hannover, Germany}
\email{cuntz@math.uni-hannover.de}
\urladdr{https://www.iazd.uni-hannover.de/de/cuntz}

\author{Thorsten~Holm}
\address{Thorsten Holm, Leibniz Universit\"at Hannover,
Institut f\"ur Algebra, Zahlentheorie und Dis\-krete Mathematik,
Fakult\"at f\"ur Mathematik und Physik,
Welfengarten 1,
D-30167 Hannover, Germany}
\email{holm@math.uni-hannover.de}
\urladdr{https://www.iazd.uni-hannover.de/de/holm}

\author{Peter J{\o}rgensen}
\address{Peter J{\o}rgensen, 
Department of Mathematics,
Aarhus University,
Ny Munkegade 118,
8000 Aarhus C,
Denmark}
\email{peter.jorgensen@math.au.dk}
\urladdr{https://sites.google.com/view/peterjorgensen}

\keywords{division ring, frieze pattern, Hamilton quaternions, Hurwitz quaternions,
Lipschitz quaternions, quiddity cycle, 
non-commutative polygon, normed division ring, quaternion algebra}

\subjclass[2020]{05E99, 11R52, 12E15, 13F60, 16K99, 51M20}

\begin{document}

\begin{abstract}
Non-commutative friezes have been introduced by Berenstein and Retakh and studied further
by the authors. In this paper we consider non-commutative friezes over normed division rings, 
like for instance Hamilton's quaternions or more general quaternion algebras. We address the fundamental
question in the theory of friezes of whether over a certain subset there are finitely or infinitely many 
non-commutative friezes (with 1's on the boundary) for any height. As an application of a theorem bounding the norm
of quiddity entries we deduce that for every norm-finite subset of a normed division ring there are only
finitely many such non-commutative friezes for every height. In particular this result applies 
to the Lipschitz quaternions and the Hurwitz quaternions of Hamilton's quaternions. We then study more
generally non-commutative friezes over Lipschitz subrings of non-split quaternion algebras 
$(a,b)_{\mathbb{Q}}$. We determine the
frieze subrings for all $a,b<0$, and as a consequence we
see that all such non-commutative friezes are known if $a\le -4$ and $b\le -4$.
\end{abstract}

\maketitle

\section{Introduction}
Frieze patterns of numbers have been introduced by Coxeter \cite{Cox71} in 1971. Soon afterwards, Conway and
Coxeter \cite{CC73} have developed the theory and, among other things, have shown that frieze patterns of positive
integers are in bijection with triangulations of polygons. This connects frieze patterns with a plethora 
of other objects in mathematics which are counted by Catalan numbers. The interest in frieze patterns
grew further around 2000 when Fomin and Zelevinsky introduced cluster algebras. It turned out that the 
exchange relations in cluster algebras of Dynkin type A correspond to the defining relations in Coxeter's frieze patterns so that
the generators of cluster algebras, the cluster variables, appear as entries in certain frieze patterns. Since then 
an extensive literature on frieze patterns has evolved and frieze patterns nowadays form a nexus between different 
mathematical areas like algebra, combinatorics, geometry and number theory.

Classically, the entries 
in frieze patterns come from some commutative ring, this also reflects the fact that cluster algebras 
are commutative rings. Only recently, the concept of frieze patterns has been 
taken to a non-commutative setting. This has been initiated by Berenstein and Retakh in their 
remarkable paper on non-commutative marked surfaces \cite{BR18}. Building on their notion of non-commutative
polygons, we developed a theory of non-commutative frieze patterns; we show in \cite{CHJ24}, \cite{CHJ25}
how several classic results for (commutative) frieze patterns can be transferred to non-commutative 
versions, for instance the very definition of frieze patterns by a small set of local relations, a formula for
frieze determinants, results on gluing friezes and, most importantly, a $T$-path formula expressing 
a non-commutative version of the Laurent phenomenon. Such results on non-commutative frieze patterns 
could be seen as a hint that there might even be some non-commutative versions of cluster algebras. So far,
it seems that this has not been achieved in general but specific non-commutative cluster algebras have recently been 
introduced and studied, see \cite{BR18}, \cite{GK21}, \cite{GKW24}, \cite{GKNW24}.

A non-commutative frieze on a polygon $\mathcal{P}$ is a map from the diagonals, that is ordered
pairs of vertices, to the invertible elements of a ring $D$ such that two types of relations are satisfied,
the triangle relations and the exchange relations (see Definition \ref{def:ncpolygon} for details).  
A crucial difference to a classic frieze is that for a non-commutative frieze the diagonals 
exist in both directions, so in the corresponding non-commutative frieze values $c_{i,j}$ and $c_{j,i}$
can be different. 
Every classic frieze over a commutative ring is also a non-commutative frieze; the triangle relations 
are trivial in the commutative case and the exchange relations become the well-known Ptolemy relations. 
However, for a non-commutative frieze the triangle relations and the exchange relations involve
inverses of elements from $D$ and they do not cancel out (as in the commutative case). Therefore,
it is most natural to consider non-commutative friezes over division rings and this is the topic of this paper. 

Starting from the ring of Hamilton quaternions we consider more generally non-commutative friezes 
over quaternion algebras. Quaternion algebras are 4-dimensional central simple algebras over a field,
they are applied in number theory in the context of quadratic forms and Brauer groups.
Quaternion algebras $(a,b)_F$ depend on two parameters $a,b$ from the field $F$ and they
can be defined explicitly by relations similar to Hamilton quaternions.
They provide a
rich source of division rings, actually each quaternion algebra is either a division rings or isomorphic
to the ring of $2\times 2$-matrices over the underlying field (in characteristic not 2). In Section \ref{sec:normed} we recall 
some basic definitions and properties of quaternion algebras which are needed later. 
For more details we refer to the extensive textbook by Voight \cite{Voight}. 

A crucial feature of quaternion algebras is that they admit a norm form $N:(a,b)_F\to F$ which 
can be used to define a norm on certain quaternion algebras. In this paper we more generally
consider non-commutative friezes over normed division rings (that is, division rings $D$ with a norm
function $\lVert .\rVert:D\to \mathbb{R}_{\ge 0}$). This includes non-commutative friezes over Hamilton
quaternions as a special case, but opens a much broader perspective. 

An important and fundamental question in the theory of friezes is whether for a given subset of a ring
there are finitely many or infinitely many friezes for each
height. We address this question for friezes over
normed division rings
where all boundary entries are equal to 1 (which is by far the most intensively studied type of friezes
in the literature). 
However, our first main result deals more
generally with non-commutative friezes with 
arbitrary boundary entries.
\medskip

\noindent
{\bf Theorem \ref{lem:finite}}. 
{\em Let $D$ be a normed division ring. 
Let $R\subseteq D \setminus\{0\}$ be a subset such that 
$$M:=\inf\{\lVert x\rVert\,:\,x\in R\}>0.
$$
Let $\mathcal{C}=(c_{i,j})$ be a non-commutative frieze 
on an $(n+3)$-gon with 
$n\ge 1$
over $R$ and set
$$P= \max \{ \lVert c_{i,i+1}\rVert, \lVert c_{i+1,i}\rVert \,:\, 0\le i\le n+2\},
$$
the maximal value of the norm of a boundary entry. 
Then the norm of every quiddity entry $c_{j,j+2}$ and $c_{j+2,j}$ of $\mathcal{C}$ is 
at most 
$$\frac{P^2(M+nP)}{M^2}.
$$
}

\bigskip

As a consequence we obtain that for many subsets of normed division rings there are only finitely many 
non-commutative friezes for any height. For the definition of {\em norm-finite subset} see
Definition \ref{def:normedsubset}. In particular the following result applies to prominent subsets 
of Hamilton's quaternions like the Lipschitz quaternions and the Hurwitz quaternions. 
\smallskip

\noindent
{\bf Corollary \ref{cor:finitelymany}.}
{\em Let $D$ be a normed division ring and $R\subseteq D\setminus \{0\}$ a norm-finite subset. 
Then for each $n\in \mathbb{N}$ there are only finitely many non-commutative friezes over $R$ of height $n$
with all boundary entries 1. 
}
\medskip

In the final section we consider specifically non-split quaternion algebras over the rational numbers, that is,
quaternion algebras of the form $(a,b)_{\mathbb{Q}}$ with negative rational numbers $a,b$. We define 
(see Definition \ref{def:lipschitz}) the {\em Lipschitz subring} as
$$(a,b)_{\mathbb{Z}} = \{\alpha + \beta\mathrm{i}+\gamma\mathrm{j}+\delta\mathrm{k}\,|\,
\alpha,\beta,\gamma,\delta\in \mathbb{Z}\}.
$$
We observe that the Lipschitz subring is a norm-finite subset of the normed division ring $(a,b)_{\mathbb{Q}}$. 
So we can apply Corollary \ref{cor:finitelymany} and deduce that there are only finitely many non-commutative
friezes with 1s on the boundary over the Lipschitz subring. 

A very natural question then is whether one can find or classify all these non-commutative friezes. Roughly speaking,
this is possible in very many cases, namely unless one of $a$ and $b$ takes values in $\{-3,-2,-1\}$. 
\smallskip

\noindent
{\bf Definition \ref{def:friezesubring}.}
Let $R$ be a ring. The {\em non-commutative frieze subring} of $R$ is defined as the ring $R^{\circ}$ 
generated by all entries of all non-commutative frieze patterns over $R$ with ones on the boundary. 
\medskip

With this notion we can then state another main result. 
\smallskip

\noindent
{\bf Theorem \ref{thm:abcirc}.}
{\em Let $a<0$ and $b<0$ be integers. For the non-commutative frieze subring of the Lipschitz
quaternion ring we get 
$$(a,b)_{\mathbb{Z}}^{\circ} = \left\{
\begin{array}{ll} \mathbb{Z} & \mbox{ if $a\le -4$ and $b\le -4$} \\
(a,b)_{\mathbb{Z}} & \mbox{ if $a\ge -3$ and $b\ge -3$}
\end{array} \right.
$$
}

Note that this result says that if $a\le -4$ and $b\le -4$ the only non-commutative friezes over
the Lipschitz subring $(a,b)_{\mathbb{Z}}$ are the classic friezes over the integers $\mathbb{Z}$,
that is, the Conway-Coxeter friezes and the related twisted versions obtained by multiplying 
every second diagonal by $-1$. It seems to be a hard question to find or list all non-commutative 
friezes if $a$ or $b$ is in $\{-3,-2,-1\}$.
Based on non-AI computer 
experiments we conjectured that 
$(a,b)_{\mathbb{Z}}^{\circ}$ is equal to 
$\mathbb{Z}[\mathrm{i}]$ (if $a\in \{-3,-2,-1\}$ and 
$b\le -4$)
or $\mathbb{Z}[\mathrm{j}]$ (if $a\le -4$ and $b\in \{-3,-2,-1\}$).
The Appendix contains a proof of this
conjecture found by ChatGPT 5.6 Pro. Combining this with 
Theorem \ref{thm:abcirc} we get the following complete
answer for the non-commutative frieze subrings
of the Lipschitz quaternions.
\smallskip

\noindent
{\bf Theorem \ref{cor:classification}.}
{\em Let $a<0$ and $b<0$ be integers. For the non-commutative frieze subring of the Lipschitz
quaternion ring we have 
$$(a,b)_{\mathbb{Z}}^{\circ} = \left\{
\begin{array}{ll} \mathbb{Z} & \mbox{ if $a\le -4$ and $b\le -4$}, \\
(a,b)_{\mathbb{Z}} & \mbox{ if $a\ge -3$ and $b\ge -3$}, \\
\mathbb{Z}[\mathrm{i}] & \mbox{ if $a\in \{-1,-2,-3\}$
and $b\le -4$}, \\
\mathbb{Z}[\mathrm{j}] & \mbox{ if $a\le -4$ and $b\in \{-1,-2,-3\}$}.
\end{array} \right.
$$
}

\section{Normed division rings and quaternion algebras} \label{sec:normed}

\begin{Definition} \label{def:normeddivring}
A {\em normed division ring} $D$ is an associative ring with unity such that 
each non-zero element is invertible in $D$ and there exists a norm function
$\lVert .\rVert:D\to \mathbb{R}_{\ge 0}$, satisfying the following axioms.
\begin{enumerate}
\item[{(N1)}] $\lVert a\rVert =0$ $\Longleftrightarrow$ $a=0$.
\item[{(N2)}] $\lVert a\cdot b\rVert = \lVert a\rVert
\cdot \lVert b\rVert$ for all $a,b\in D$.
\item[{(N3)}] $\lVert a+b\rVert \le \rVert a\rVert +\lVert b\rVert$ for all $a,b\in D$.
\end{enumerate}
\end{Definition}

\medskip

\begin{Example} \label{ex:hamilton}
A famous example of a normed division ring is the Hamilton quaternions 
$$\mathbb{H} = \{\alpha+\beta\mathrm{i}+\gamma\mathrm{j}+\delta\mathrm{k}\,|\,
\alpha,\beta,\gamma,\delta\in \mathbb{R}\}.
$$
This is a 4-dimensional real vector space and the product between basis
elements is given in the following table (and extended to arbitrary elements of $\mathbb{H}$ 
by associativity and distributivity): 
$$\begin{array}{c|c|c|c}
  & \mathrm{i} & \mathrm{j} & \mathrm{k} \\
 \hline
 \mathrm{i} & -1 & \mathrm{k} & -\mathrm{j} \\
 \hline
 \mathrm{j} & -\mathrm{k} & -1 & \mathrm{i} \\
 \hline
 \mathrm{k} & \mathrm{j} & -\mathrm{i} & -1
 \end{array}
 $$
For an element $q=\alpha+\beta\mathrm{i}+\gamma\mathrm{j}+\delta\mathrm{k}\in \mathbb{H}$
its {\em conjugate} is the element
$$\overline{q} =  \alpha-\beta\mathrm{i}-\gamma\mathrm{j}-\delta\mathrm{k}\in \mathbb{H}.
$$

A norm function on $\mathbb{H}$ is given by
$$\lVert \alpha+\beta\mathrm{i}+\gamma\mathrm{j}+\delta\mathrm{k}\rVert = 
\sqrt{\alpha^2+\beta^2+\gamma^2+\delta^2} = \sqrt{q\,\overline{q}}.
$$
(This is indeed a norm function in the sense of Definition \ref{def:normeddivring}.
(N1) is immediate from the definition. For (N2), let $q_1,q_2\in \mathbb{H}$. In 
Proposition \ref{prop:conjnormprop} below we will show in a more general setting that
$\overline{q_1q_2}=\overline{q_2}\,\overline{q_1}$. Using this we get 
$$\lVert q_1\,q_2\rVert^2 = q_1q_2\overline{q_1q_2} = q_1q_2\overline{q_2}\,\overline{q_1}
= q_1 \lVert q_2\rVert^2\,\overline{q_1} = q_1 \,\overline{q_1}\,\lVert q_2\rVert^2 = \lVert q_1\rVert^2\cdot \lVert q_2\rVert^2
$$
and (N2) follows by taking square roots. For (N3), note that the norm on $\mathbb{H}$ 
is the usual Euclidean norm when $\mathbb{H}$ is viewed as a 4-dimensional 
real vector space, in particular, the triangle inequality (N3) holds.)

The inverse of a non-zero quaternion is then given by 
$$(\alpha+\beta\mathrm{i}+\gamma\mathrm{j}+\delta\mathrm{k})^{-1} = 
\frac{1}{\lVert \alpha+\beta\mathrm{i}+\gamma\mathrm{j}+\delta\mathrm{k}\rVert^2}
(\alpha-\beta\mathrm{i}-\gamma\mathrm{j}-\delta\mathrm{k}) \in \mathbb{H}.
$$
Hence, $\mathbb{H}$ is a normed division ring.
\medskip

There are many subrings of $\mathbb{H}$ which are also normed division rings (with the norm induced
from $\mathbb{H}$). For any subfield $K\subseteq \mathbb{R}$, the subset
$$\mathbb{H}_K:=\{\alpha+\beta\mathrm{i}+\gamma\mathrm{j}+\delta\mathrm{k}\,|\,
\alpha,\beta,\gamma,\delta\in K\} \subseteq \mathbb{H}
$$ 
is a normed division ring. In fact, $\mathbb{H}_K$ is clearly a subring of $\mathbb{H}$, and for the 
inverses we have
$$(\alpha+\beta\mathrm{i}+\gamma\mathrm{j}+\delta\mathrm{k})^{-1}
= \frac{1}{\alpha^2+\beta^2+\gamma^2+\delta^2} (\alpha-\beta\mathrm{i}-\gamma\mathrm{j}-
\delta\mathrm{k})\in \mathbb{H}_K
$$
since $K$ is a subfield of $\mathbb{R}$.
\end{Example}

\medskip

A natural and important generalization of Hamilton's quaternions is {\em quaternion algebras}
which we now define and consider. Among these we will also find further examples 
of normed division rings. Quaternion algebras can be defined over arbitrary base fields. They
have applications in number theory, in the context of quadratic forms and Brauer groups. 
We present here only some fundamental definitions and properties which we shall later 
need for considering non-commutative friezes over quaternion algebras. 
For more details on quaternion algebras we refer to the textbook by Voight
\cite{Voight} or the notes by Conrad \cite{Conrad}. 
\bigskip

Let $F$ be a field of characteristic not 2. 

\begin{Definition} \label{def:quatalg}
A {\em quaternion algebra} over $F$ is a 4-dimensional algebra over $F$ with a vector space basis
$\{1,\mathrm{i},\mathrm{j},\mathrm{k}\}$ such that the following conditions are satisfied:
\begin{enumerate}
\item[{(i)}] $\mathrm{i}^2=a$ and $\mathrm{j}^2=b$ for 
certain fixed $a,b\in F\setminus\{0\}$,
\item[{(ii)}] $\mathrm{i}\mathrm{j}=\mathrm{k}=-\mathrm{j}\mathrm{i}$,
\item[{(iii)}] all elements of $F$ commute with $\mathrm{i}$ and $\mathrm{j}$.  
\end{enumerate}
Notation for such a quaternion algebra: $(a,b)_F$. 
\end{Definition}

\begin{Remark} \label{rem:quatprod}
From the relations in Definition \ref{def:quatalg} one can deduce all products between the elements 
of the vector space basis, as given in the following table.
$$\begin{array}{c|c|c|c}
  & \mathrm{i} & \mathrm{j} & \mathrm{k} \\
 \hline
 \mathrm{i} & a & \mathrm{k} & a\mathrm{j} \\
 \hline
 \mathrm{j} & -\mathrm{k} & b & -b\mathrm{i} \\
 \hline
 \mathrm{k} & -a\mathrm{j} & b\mathrm{i} & -ab
 \end{array}
 $$
 As a consequence one has the following formula for the product of two arbitrary elements
 $q_1=\alpha_1+\beta_1 \mathrm{i} + \gamma_1 \mathrm{j} + \delta_1 \mathrm{k}$ and
  $q_2=\alpha_2+\beta_2 \mathrm{i} + \gamma_2 \mathrm{j} + \delta_2 \mathrm{k}$ in $(a,b)_F$:
\begin{eqnarray*}
q_1q_2 & = & (\alpha_1\alpha_2 + a\beta_1\beta_2 + b \gamma_1\gamma_2 -ab\delta_1\delta_2) \\
& & + (\alpha_1\beta_2 + \beta_1\alpha_2 - b \gamma_1\delta_2 + b\delta_1\gamma_2) \mathrm{i} \\
& & + ( \alpha_1\gamma_2 + a\beta_1\delta_2 + \gamma_1\alpha_2 - a\delta_1\beta_2) \mathrm{j} \\
& & + (\alpha_1\delta_2 +\beta_1\gamma_2 - \gamma_1\beta_2 +\delta_1\alpha_2) \mathrm{k}.
\end{eqnarray*}
\end{Remark}

\begin{Example}
Hamilton's quaternions $\mathbb{H}$ (as introduced in Example \ref{ex:hamilton})
appear for the field $F=\mathbb{R}$ and $a=b=-1$,
i.e. $\mathbb{H}=(-1,-1)_{\mathbb{R}}$. 
\end{Example}

\begin{Definition} \label{def:norm}
Let $(a,b)_F$ be a quaternion algebra. 
For every element $q=\alpha + \beta \mathrm{i}+\gamma \mathrm{j} +\delta \mathrm{k}\in (a,b)_F$ its {\em conjugate} is
defined as 
$$\overline{q} = \alpha - \beta \mathrm{i} - \gamma \mathrm{j} - \delta \mathrm{k}\in (a,b)_F.
$$
The {\em norm form} on $(a,b)_{F}$ is the function 
$$N:(a,b)_F\to F,~~N(q) = \alpha^2 -a \beta^2 - b\gamma^2 + ab \delta^2.
$$ 
\end{Definition}
\smallskip

We collect some fundamental properties of the conjugates and of the norm form. 

\begin{Proposition} \label{prop:conjnormprop}
Let $(a,b)_F$ be a quaternion algebra. 
\begin{enumerate}
\item[{(i)}] For every $q,q_1,q_2\in (a,b)_F$ we have
$$\overline{q_1+q_2} = \overline{q_1} + \overline{q_2},~~\overline{q_1q_2} = \overline{q_2}\,\overline{q_1},
\mbox{~~and~~}\overline{\overline{q}}=q.
$$
Note the different ordering in the formula for the product.
\item[{(ii)}] For each $q\in (a,b)_F$ we have $N(q) = q\,\overline{q}=\overline{q}\,q$. 
\item[{(iii)}] For every $q_1,q_2\in (a,b)_F$ we have
$N(q_1q_2)=N(q_1)N(q_2)$, i.e., the norm form is multiplicative. 
\end{enumerate}
\end{Proposition}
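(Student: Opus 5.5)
The plan is to verify all three parts by direct computation in the basis $\{1,\mathrm{i},\mathrm{j},\mathrm{k}\}$, using the multiplication table in Remark \ref{rem:quatprod}. The conjugation map is $F$-linear by definition, so additivity $\overline{q_1+q_2}=\overline{q_1}+\overline{q_2}$ and the involution property $\overline{\overline{q}}=q$ are immediate.

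For the anti-multiplicativity $\overline{q_1q_2}=\overline{q_2}\,\overline{q_1}$, both sides are $F$-bilinear in $(q_1,q_2)$. It therefore suffices to check the identity on pairs of basis elements.
\begin{itemize}
\item If one factor is $1$, the identity is trivial.
\item For $x\in\{\mathrm{i},\mathrm{j},\mathrm{k}\}$ we have $\overline{x}=-x$, so the claim for pure basis elements $x,y$ reads $\overline{xy}=yx$.
\item For $x=y$, $xy=x^2\in F$ is fixed by conjugation; this covers $a$, $b$ and $-ab$.
\item For $x\neq y$, the table shows $xy$ is a scalar multiple of the third pure basis element and $yx=-xy$. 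Hence $\overline{xy}=-xy=yx$.
\end{itemize}
These nine cases are the main bookkeeping step. The only real work is confirming the anticommutation $yx=-xy$ for distinct pure basis elements from the table, e.g.\ $\mathrm{i}\mathrm{k}=a\mathrm{j}$ versus $\mathrm{k}\mathrm{i}=-a\mathrm{j}$.

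For (ii), write $q=\alpha+v$ with $v=\beta\mathrm{i}+\gamma\mathrm{j}+\delta\mathrm{k}$, so that $\overline{q}=\alpha-v$. Since $\alpha$ is central, $q\overline{q}=\alpha^2-v^2=\overline{q}q$. Expanding $v^2$, the cross terms cancel in pairs by the anticommutation just established, and the squares of basis elements give $v^2=a\beta^2+b\gamma^2-ab\delta^2$. Therefore $q\overline{q}=\overline{q}q=\alpha^2-a\beta^2-b\gamma^2+ab\delta^2=N(q)$.

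For (iii), combine (i) and (ii) with the fact that $N(q_2)\in F$ is central:
$$N(q_1q_2)=q_1q_2\overline{q_2}\,\overline{q_1}=q_1N(q_2)\overline{q_1}=N(q_1)N(q_2).$$
This is the same computation as in Example \ref{ex:hamilton}. No step is a genuine obstacle; the only place errors can creep in is the case check for (i).
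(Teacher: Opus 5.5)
Your proof is correct, and your part (iii) is the same argument as the paper's. Parts (i) and (ii) are organized differently.

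\textbf{How the paper does (i) and (ii).} The paper works by brute force. It expands $\overline{q_2}\,\overline{q_1}$ and $q\overline{q}$ with the general sixteen-term coordinate product formula of Remark~\ref{rem:quatprod} and compares coefficients. It leaves $\overline{q}q$ to ``a very similar computation''.

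\textbf{How you do (i) and (ii).}
\begin{enumerate}
\item[(i)] You observe that $(q_1,q_2)\mapsto\overline{q_1q_2}$ and $(q_1,q_2)\mapsto\overline{q_2}\,\overline{q_1}$ are both $F$-bilinear. This reduces anti-multiplicativity to pairs of basis elements, which the multiplication table settles.
\item[(ii)] You split $q=\alpha+v$ into its scalar part and its pure part. Since $\alpha$ is central, $q\overline{q}=\overline{q}q=\alpha^2-v^2$ follows at once. The value of $v^2$ then comes from the pure basis elements squaring into $F$ and pairwise anticommuting.
\end{enumerate}

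\textbf{What each route buys.} Your route shows the structural reason behind the identities and treats both orders in (ii) together. It also avoids the long expansion, where sign errors are easy to make. The paper's route is fully mechanical and needs no bilinearity argument. However, it depends on the coordinate product formula having been derived correctly, whereas yours needs only the nine products in the basis table.
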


\begin{proof} We write $q=\alpha + \beta \mathrm{i}+\gamma \mathrm{j} +\delta \mathrm{k}$ and
$q_{\ell}=\alpha_{\ell} + \beta_{\ell} \mathrm{i}+\gamma_{\ell} \mathrm{j} +\delta_{\ell} \mathrm{k}$ for $\ell=1,2$. 
\begin{enumerate}
\item[{(i)}] Direct computations yield
\begin{eqnarray*}
\overline{q_1+q_2} & = & (\alpha_{1}+\alpha_2) - (\beta_{1}+\beta_2) \mathrm{i} -
(\gamma_{1}+\gamma_2) \mathrm{j} - (\delta_{1}+\delta_2) \mathrm{k} \\
& = & (\alpha_{1} - \beta_{1} \mathrm{i}-\gamma_{1}\mathrm{j} - \delta_{1} \mathrm{k}) +
 (\alpha_{2} - \beta_{2} \mathrm{i} - \gamma_{2}\mathrm{j} - \delta_{2} \mathrm{k}) \,=\, 
 \overline{q_1}+\overline{q_2}
\end{eqnarray*}
and using the product formula in Remark \ref{rem:quatprod} we also get 
\begin{eqnarray*}
\overline{q_2}\,\overline{q_1} & = & (\alpha_2\alpha_1 + a\beta_2\beta_1 + b \gamma_2\gamma_1 -ab\delta_2\delta_1) \\
& & + (-\alpha_2\beta_1 - \beta_2\alpha_1 - b \gamma_2\delta_1 + b\delta_2\gamma_1) \mathrm{i} \\
& & + ( -\alpha_2\gamma_1 + a\beta_2\delta_1 - \gamma_2\alpha_1 - a\delta_2\beta_1) \mathrm{j} \\
& & + (-\alpha_2\delta_1 +\beta_2\gamma_1 - \gamma_2\beta_1 +\delta_2\alpha_1) \mathrm{k} \\
& = & \overline{q_1q_2}.
\end{eqnarray*}
The formula $\overline{\overline{q}}=q$ is immediate from the definition of the conjugation.
\item[{(ii)}] Again using the product formula from Remark \ref{rem:quatprod} and
Definition \ref{def:norm} we obtain
\begin{eqnarray*}
q\,\overline{q} & = & (\alpha^2-a\beta^2-b\gamma^2+ab\delta^2) \\
& & + (-\alpha\beta + \beta\alpha +b\gamma\delta-b\delta\gamma) \mathrm{i} \\
& & + ( -\alpha\gamma-a\beta\delta+\gamma\alpha+a\delta\beta) \mathrm{j} \\
& & + (-\alpha\delta-\beta\gamma+\gamma\beta+\delta\alpha) \mathrm{k} \\
& = & \alpha^2-a\beta^2-b\gamma^2+ab\delta^2 \,=\, N(q). 
\end{eqnarray*}
A very similar computation shows that also $\overline{q}\,q = N(q)$.
\item[{(iii)}] By (i) and (ii) and since values of the norm form are in the field $F$ (whose elements are in the 
center of $(a,b)_F$ by Definition \ref{def:quatalg}) we get
$$N(q_1q_2) = q_1q_2\,\overline{q_1q_2} = q_1q_2 \overline{q_2}\,\overline{q_1}
= q_1 N(q_2)\,\overline{q_1} = q_1\,\overline{q_1} N(q_2) =N(q_1)N(q_2). 
$$
\end{enumerate}
\end{proof}

Whether an element in a quaternion algebra $(a,b)_F$ is invertible (with respect to multiplication)
is determined by the norm form. 

\begin{Proposition} \label{prop:invertible}
Let $(a,b)_F$ be a quaternion algebra and $N:(a,b)_F\to F$ its norm form (as in Definition \ref{def:norm}). 
An element $q\in (a,b)_F$ is invertible if and only if
$N(q)\neq 0$.
\end{Proposition}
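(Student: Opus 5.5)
We prove both directions directly from Proposition \ref{prop:conjnormprop}, using that $N(q)$ lies in $F$ and hence in the center of $(a,b)_F$ (Definition \ref{def:quatalg}(iii)).

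Suppose first that $N(q)\neq 0$. We propose the explicit inverse
$$q^{-1} = N(q)^{-1}\,\overline{q},$$
where $N(q)^{-1}$ is the inverse in the field $F$. By Proposition \ref{prop:conjnormprop}(ii) we have $q\,\overline{q}=N(q)=\overline{q}\,q$. Since $N(q)^{-1}$ is central, it follows that
$$q\cdot N(q)^{-1}\overline{q} = N(q)^{-1}\,q\overline{q} = 1,$$
and likewise $N(q)^{-1}\overline{q}\cdot q = 1$. So $q$ has a two-sided inverse.

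Conversely, suppose $q$ is invertible with inverse $q'$, so that $qq'=1$. By Proposition \ref{prop:conjnormprop}(iii), multiplicativity of the norm form gives
$$N(q)\,N(q') = N(qq') = N(1) = 1,$$
where $N(1)=1$ is immediate from Definition \ref{def:norm}. Since this is an identity in the field $F$, it forces $N(q)\neq 0$.

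No step is a genuine obstacle. The only points needing care are that both one-sided products must be checked in the first direction, which is why we use both identities in part (ii), and that centrality of $F$ is what allows the scalar $N(q)^{-1}$ to be moved past $q$.
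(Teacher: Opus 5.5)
Your proof is correct and follows the paper's argument essentially verbatim. You use multiplicativity of $N$ together with $N(1)=1$ for necessity, and the explicit two-sided inverse $N(q)^{-1}\overline{q}$ (via Proposition \ref{prop:conjnormprop}(ii) and centrality of $F$) for sufficiency.
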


\begin{proof}
Let $q$ be invertible, i.e., there exists $q'\in (a,b)_F$ such that $qq'=1$ and $q'q=1$. By Proposition 
\ref{prop:conjnormprop}\,(iii) it follows that
$N(q)N(q') = N(qq')=N(1)=1$. In particular, $N(q)\neq 0$. 

Conversely, suppose that $N(q)\neq 0$. Then Proposition \ref{prop:conjnormprop}\,(ii) yields
$$q\left( N(q)^{-1} \overline{q}\right) = N(q)^{-1} q\,\overline{q} = 1 = \left( N(q)^{-1} \overline{q}\right)q.
$$
Thus $q$ is invertible, with inverse  $N(q)^{-1} \overline{q}\in (a,b)_F$. 
\end{proof}

\begin{Remark} \label{rem:a1}
It is well-known that the Hamilton quaternions $\mathbb{H}=(-1,-1)_{\mathbb{R}}$ form a division 
ring. This easily follows from Proposition \ref{prop:invertible} as in this case the norm  
$$N(q)=N(\alpha + \beta \mathrm{i}+\gamma \mathrm{j} + \delta \mathrm{k}) = 
\alpha^2+\beta^2+\gamma^2+\delta^2$$
is a positive real number for all $q\neq 0$. 
\medskip

For other values of $a,b$ and other fields $F$ it is a priori unclear whether non-zero elements having norm 0
exist.  

As an example, the quaternion algebra $(a,1)_F$ is not a division ring for every field $F$ and all $a\in F$. 
In fact, in $(a,1)_F$ we have by Definition \ref{def:norm} that 
$$N(\alpha+\beta \mathrm{i} + \gamma \mathrm{j}+\delta \mathrm{k}) = \alpha^2 -a\beta ^2-\gamma^2+a\delta^2.
$$
For instance, every element with $\alpha=\gamma$ and $\beta=\delta$ has norm 0, hence is not invertible
in $(a,1)_F$. 
\end{Remark}

\medskip

For the structure of quaternion algebras there are only two fundamental possibilities, 
they are either division rings or matrix rings. A proof of the following result can be found
in the notes by Conrad \cite[Theorem 4.20]{Conrad}

\begin{Theorem} \label{thm:notdivision}
Let $F$ be a field of characteristic not 2. 
Every quaternion algebra $(a,b)_F$ which is not a division ring is isomorphic to
the ring $M(2,F)$ of $2\times 2$-matrices over $F$. 
\end{Theorem}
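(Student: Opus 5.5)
The plan is to reduce to the standard presentation $(1,c)_F$ by an elementary change of basis, and then write down an explicit matrix model.

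\textbf{Step 1 (a pure quaternion squaring to $1$).} Call $p=\beta\mathrm{i}+\gamma\mathrm{j}+\delta\mathrm{k}$ \emph{pure}. From the product formula in Remark~\ref{rem:quatprod} one has $p^2=-N(p)\in F$ for pure $p$. Pure $p,p'$ anticommute exactly when they are orthogonal for the bilinear form attached to $N$; on the pure part this form is diagonal with entries $-a,-b,ab$.

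Suppose $(a,b)_F$ is not a division ring. By Proposition~\ref{prop:invertible} there is $q\neq0$ with $N(q)=0$. Write $q=\alpha+p$ with $p$ pure; then $N(q)=\alpha^2+N(p)=0$.
\begin{itemize}
\item If $\alpha\neq0$, then $u:=p/\alpha$ is pure with $N(u)=-1$, so $u^2=1$.
\item If $\alpha=0$, then the nondegenerate ternary form $N|_{\text{pure}}$ is isotropic. A nondegenerate isotropic quadratic form in characteristic $\neq2$ contains a hyperbolic plane and is therefore universal. Hence again there is a pure $u$ with $N(u)=-1$, i.e.\ $u^2=1$.
\end{itemize}

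\textbf{Step 2 (a new standard basis).} Let $W$ be the orthogonal complement of $u$ inside the pure space, taken with respect to the nondegenerate ternary form. Since $N(u)\neq0$, the form restricted to $W$ is nondegenerate and $\dim W=2$, so there is $v\in W$ with $N(v)\neq0$. Then $v$ is pure, $uv=-vu$, and $v^2=c:=-N(v)\in F^\times$.

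I then check that $1,u,v,uv$ are linearly independent. If $x+yu+zv+w\,uv=0$, multiply on the left and right by $u$ and $v$ and use the anticommutation relations; combined with $u^2=1$ and $c\neq0$ this forces $x=y=z=w=0$. Hence $(a,b)_F$ has a basis satisfying the relations of Definition~\ref{def:quatalg} with parameters $(1,c)$.

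\textbf{Step 3 (the matrix model).} Define an $F$-algebra map $\varphi\colon(a,b)_F\to M(2,F)$ by
$$u\mapsto\begin{pmatrix}1&0\\0&-1\end{pmatrix},\qquad v\mapsto\begin{pmatrix}0&c\\1&0\end{pmatrix}.$$
One checks directly that the images satisfy $U^2=1$, $V^2=c$ and $UV=-VU$. Since the algebra is presented by these relations on the basis $1,u,v,uv$, $\varphi$ is a well-defined homomorphism. The four images $I,U,V,UV$ span $M(2,F)$: their span contains $\mathrm{diag}(1,0)$ and $\mathrm{diag}(0,1)$ and both off-diagonal units, using $c\neq0$ and $\operatorname{char}F\neq2$. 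So $\varphi$ is surjective, and by equality of dimensions it is an isomorphism.

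\textbf{Main obstacle.} The hardest point is Step 1 in the case $\alpha=0$: passing from an isotropic pure quaternion to one of norm $-1$. There I would rely on the standard fact that an isotropic nondegenerate quadratic form is universal, which is proved by exhibiting a hyperbolic plane. Everything else is routine verification.
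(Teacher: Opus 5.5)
Your proof is correct. The paper does not prove this theorem itself; it quotes it from Conrad's notes \cite{Conrad} (Theorem 4.20 there). So your argument is a self-contained replacement for a citation, not a variant of an in-paper proof.

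\emph{Comparison with the usual route.} The standard structural argument first shows that $(a,b)_F$ is central simple. It then either applies Wedderburn's theorem ($4=n^2\dim_F\Delta$ leaves only $n=1$, a division ring, or $n=2$, giving $M(2,F)$). Or it lets the algebra act faithfully by left multiplication on a nonzero proper left ideal, which is forced to be two-dimensional. Your route needs no simplicity or Wedderburn theory, only the fact that an isotropic nondegenerate quadratic form is universal. In return you get an explicit isomorphism through the normal form $(a,b)_F\cong(1,c)_F$. This is exactly the converse of the observation about $(a,1)_F$ in Remark~\ref{rem:a1}.

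\emph{The step you flag as the main obstacle.} It is a one-liner here. If $p\neq0$ is pure with $N(p)=0$, nondegeneracy gives a pure $p'$ with $B(p,p')=1$, where $B$ is the polar form of $N$. Then $u=\tfrac{-1-N(p')}{2}\,p+p'$ satisfies $N(u)=-1$.

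\emph{Two minor presentation points.}
\begin{enumerate}
\item The independence check in Step~2 uses $\operatorname{char}F\neq2$: conjugating by $u$ and adding gives $2(x+yu)=0$, and you need to divide by 2.
\item In Step~3, state well-definedness as follows. The multiplication table of $1,u,v,uv$ is forced by $u^2=1$, $v^2=c$, $uv=-vu$ and associativity. The matrices $I,U,V,UV$ satisfy the same relations in the associative algebra $M(2,F)$, so the linear map is multiplicative.
\end{enumerate}
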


We recall a standard notation from the theory of quaternion algebras.

\begin{Definition} \label{def:split}
Let $F$ be a field and $a,b\in F$. The quaternion algebra $(a,b)_F$ is called {\em split} if
$(a,b)_F\cong M(2,F)$. Otherwise, $(a,b)_F$ is called a {\em non-split} quaternion algebra. 
\end{Definition}

Note that by Theorem \ref{thm:notdivision} all non-split quaternion algebras are division rings. 
\bigskip

\begin{Remark}
Let us look at quaternion algebras over some familiar fields. 
\smallskip

Over finite prime fields $\mathbb{F}_p$ for $p$ an odd prime, all quaternion algebras are split (see
\cite[Corollary 4.24]{Conrad}). 
\smallskip

Over the real numbers there are only two quaternion algebras, up to isomorphism. This follows from the
famous theorem of Frobenius \cite{Frobenius} (see also \cite[Corollary 3.5.8]{Voight} for a modern 
textbook version), 
stating that the only finite-dimensional associative real division algebras 
are $\mathbb{R}$, $\mathbb{C}$ and $\mathbb{H}$. More precisely, 
for $a,b\in \mathbb{R}\setminus \{0\}$ we have that 
$(a,b)_{\mathbb{R}} \cong \mathbb{H}$ if $a<0$ and $b<0$, and 
$(a,b)_{\mathbb{R}} \cong M(2,\mathbb{R})$ if $a>0$ or $b>0$.
(see \cite[Example 4.5]{Conrad}). 
\smallskip

Over the 
complex numbers, all quaternion algebras are split, that is, for alle $a,b\in \mathbb{C}\setminus \{0\}$
we have $(a,b)_{\mathbb{C}}\cong M(2,\mathbb{C})$ (see \cite[Example 4.6]{Conrad}). 
\smallskip

The situation for quaternion algebras over the rational numbers $\mathbb{Q}$ is entirely different.
Actually, there are infinitely many non-isomorphic quaternion algebras $(a,b)_{\mathbb{Q}}$
(see for instance \cite[Corollary 5.5]{Conrad}). 
\end{Remark}

\section{Non-commutative frieze patterns}

We have seen in the previous section that the class of quaternion algebras provides numerous
examples of non-commutative division algebras. These algebraic structures are needed for the 
theory of non-commutative friezes, originating in the work of Berenstein and Retakh
\cite{BR18}. For more details on non-commu\-ta\-tive friezes see also \cite{CHJ24}, \cite{CHJ25}. 
\smallskip

In this section we collect the basic definitions and fundamental results on non-commu\-ta\-tive 
friezes, as far as they are needed in this paper. 

\begin{Definition}[Berenstein, Retakh \cite{BR18}]
\label{def:ncpolygon}
Let $D$ be a ring and 
$\mathcal{P}$ be an $m$-gon (where $m\ge 3$). For each pair of vertices $r,s$ of
$\mathcal{P}$ invertible elements $c_{r,s}\in D^{\ast}$ and $c_{s,r}\in D^{\ast}$ are assigned, giving a map 
$c:\mathrm{diag(\mathcal{P})}\to D^{\ast}$, $c((r,s))=c_{r,s}$, where $\mathrm{diag(\mathcal{P})}$ denotes the set of diagonals
of $\mathcal{P}$.
\begin{enumerate}
\item[{(a)}] The {\em triangle relation} for distinct vertices $r,s,t$ of $\mathcal{P}$ is the equation
\begin{equation} \label{eq:triangle}
c_{r,s}c_{t,s}^{-1}c_{t,r} = c_{r,t}c_{s,t}^{-1}c_{s,r}.
\end{equation}
See Figure \ref{fig:triangle} for an illustration of the triangle relations.
\item[{(b)}] Consider a quadrangle with vertices $r,s,t,u$ in $\mathcal{P}$ such that $(r,t)$ and $(s,u)$
are the diagonals of the quadrangle.
The {\em non-commutative exchange relation} for the diagonal $(t,r)$ is the equation
\begin{equation} \label{eq:ncexchange}
c_{t,r} = c_{t,u}c_{s,u}^{-1}c_{s,r}+ c_{t,s} c_{u,s}^{-1}c_{u,r}
\end{equation}
See Figure \ref{fig:exchange} for an illustration of the exchange relations.
\end{enumerate}
\end{Definition}

\begin{figure}
\begin{center}
\begin{tikzpicture}[auto]
    \node[name=s, draw, shape=regular polygon, regular polygon sides=500, minimum size=3.4cm] {};
    \draw[blue,thick, stealth-] (s.corner 1) to (s.corner 207);
    \draw[red,thick, -stealth] (s.corner 10) to (s.corner 198);
    \draw[blue,thick, -stealth] (s.corner 198) to (s.corner 339);
    \draw[red,thick, stealth-] (s.corner 207) to (s.corner 330);
    \draw[blue,thick, -stealth] (s.corner 1) to (s.corner 339);
    \draw[red,thick, stealth-] (s.corner 10) to (s.corner 330);
    
    \draw[shift=(s.corner 198)]  node[below]  {{\small $s$}};
    \draw[shift=(s.corner 1)]  node[above]  {{\small $r$}};
    \draw[shift=(s.corner 320)]  node[right]  {{\small $t$}};
   \end{tikzpicture}  
\end{center}
\caption{The triangle relation ${\red c_{r,s}c_{t,s}^{-1}c_{t,r}} = {\blue c_{r,t}c_{s,t}^{-1}c_{s,r}}$.\\
(This figure and the following ones are reproduced from our paper
\cite{CHJ24}).}
 \label{fig:triangle}
\end{figure}

\begin{figure} 
\begin{center}
\begin{tikzpicture}[auto]
    \node[name=s, draw, shape=regular polygon, regular polygon sides=500, minimum size=3.4cm] {};
    
    \draw[blue,thick, stealth-] (s.corner 53) to (s.corner 182);
    
    \draw[red,thick, -stealth] (s.corner 187) to (s.corner 306);
    \draw[blue,thick, stealth-] (s.corner 50) to (s.corner 316);
    \draw[red,thick, -stealth] (s.corner 45) to (s.corner 310);
    \draw[thick, -stealth] (s.corner 185) to (s.corner 410);
    
    \draw[red,thick, -stealth] (s.corner 45) to (s.corner 413);
     \draw[blue,thick, -stealth] (s.corner 318) to (s.corner 406);

    \draw[shift=(s.corner 178)]  node[below]  {{\small $t$}};
    \draw[shift=(s.corner 50)]  node[above]  {{\small $s$}};
    \draw[shift=(s.corner 300)]  node[right]  {{\small $u$}};
    \draw[shift=(s.corner 420)]  node[right]  {{\small $r$}};
   \end{tikzpicture}  
\end{center}
\caption{The non-commutative exchange relation
$c_{t,r} = {\red c_{t,u}c_{s,u}^{-1}c_{s,r}} + {\blue c_{t,s} c_{u,s}^{-1}c_{u,r}}$.
}
\label{fig:exchange}
\end{figure}

In our paper \cite{CHJ24} we have shown that all triangle relations and all non-commutative exchange 
relations follow from a much smaller set of relations. We recall this here and to make this precise we need the following 
notions from \cite{CHJ24}.

\begin{Definition} \label{def:nclocal}
Let $D$ be a ring and 
$\mathcal{P}$ be an $m$-gon (where $m\ge 3$). Suppose that to each pair of vertices $r,s$ of
$\mathcal{P}$ invertible elements $c_{r,s}\in D^{\ast}$ and $c_{s,r}\in D^{\ast}$ are assigned.
\begin{enumerate}
\item[{(a)}] The {\em local triangle relation} for three consecutive vertices $r,r+1,r+2$ of $\mathcal{P}$ 
is the equation
\begin{equation} \label{eq:localtriangle}
c_{r+1,r+2}c_{r,r+2}^{-1}c_{r,r+1} = c_{r+1,r}c_{r+2,r}^{-1}c_{r+2,r+1}.
\end{equation}
See Figure \ref{fig:localtriangle}.
\item[{(b)}] Consider a quadrangle with vertices $r,r+1,s,s+1$ in $\mathcal{P}$.
The {\em local non-commutative exchange relation} for the diagonal $(s,r)$ is the equation
\begin{equation} \label{eq:nclocalexchange}
c_{s,r} = c_{s,s+1}c_{r+1,s+1}^{-1}c_{r+1,r} + c_{s,r+1} c_{s+1,r+1}^{-1}c_{s+1,r}.
\end{equation}
See Figure \ref{fig:localexchange}.
There are analogous local 
non-commutative exchange relations for the other three diagonals of this quadrangle, namely:
$$
c_{r,s} = c_{r,r+1}c_{s+1,r+1}^{-1}c_{s+1,s} + c_{r,s+1} c_{r+1,s+1}^{-1}c_{r+1,s},
$$
$$
c_{r+1,s+1} = c_{r+1,r}c_{s,r}^{-1}c_{s,s+1} + c_{r+1,s} c_{r,s}^{-1}c_{r,s+1},
$$
$$
c_{s+1,r+1} = c_{s+1,s}c_{r,s}^{-1}c_{r,r+1} + c_{s+1,r} c_{s,r}^{-1}c_{s,r+1}.
$$
\item[{(c)}] A {\em non-commutative frieze} over a ring $D$ on a polygon $\mathcal{P}$ is a map $c:\mathrm{diag(\mathcal{P})}\to D^{\ast}$, $c((r,s))=c_{r,s}$, 
satisfying all local triangle relations and all local non-commutative exchange relations (for all diagonals in the 
quadrangles with vertices $r,r+1,s,s+1$). 
\end{enumerate}
\end{Definition}

\begin{figure}
\begin{center}
\begin{tikzpicture}[auto]
    \node[name=s, draw, shape=regular polygon, regular polygon sides=500, minimum size=3.4cm] {};
    \draw[blue,thick, stealth-] (s.corner 1) to (s.corner 107);
    \draw[red,thick, -stealth] (s.corner 10) to (s.corner 98);
    \draw[blue,thick, -stealth] (s.corner 98) to (s.corner 419);
    \draw[red,thick, stealth-] (s.corner 107) to (s.corner 410);
    \draw[blue,thick, -stealth] (s.corner 1) to (s.corner 419);
    \draw[red,thick, stealth-] (s.corner 10) to (s.corner 410);
    
    \draw[shift=(s.corner 98)]  node[left]  {{\small $r+2$}};
    \draw[shift=(s.corner 1)]  node[above]  {{\small $r+1$}};
    \draw[shift=(s.corner 400)]  node[right]  {{\small $r$}};
   \end{tikzpicture}  
\end{center}
\caption{The local triangle relations
$
{\red c_{r+1,r+2}c_{r,r+2}^{-1}c_{r,r+1}} = {\blue c_{r+1,r}c_{r+2,r}^{-1}c_{r+2,r+1}}
$.}
\label{fig:localtriangle}
\end{figure}

\begin{figure}
\begin{center}
\begin{tikzpicture}[auto]
    \node[name=s, draw, shape=regular polygon, regular polygon sides=500, minimum size=3.4cm] {};
    
    \draw[blue,thick, stealth-] (s.corner 53) to (s.corner 182);
    
    \draw[red,thick, -stealth] (s.corner 187) to (s.corner 286);
    \draw[blue,thick, stealth-] (s.corner 50) to (s.corner 296);
    \draw[red,thick, -stealth] (s.corner 45) to (s.corner 290);
    \draw[thick, -stealth] (s.corner 185) to (s.corner 440);
    
    \draw[red,thick, -stealth] (s.corner 45) to (s.corner 443);
     \draw[blue,thick, -stealth] (s.corner 298) to (s.corner 436);

    \draw[shift=(s.corner 178)]  node[below]  {{\small $s$}};
    \draw[shift=(s.corner 50)]  node[above]  {{\small $r+1$}};
    \draw[shift=(s.corner 280)]  node[below]  {{\small $s+1$}};
    \draw[shift=(s.corner 450)]  node[right]  {{\small $r$}};
   \end{tikzpicture}  
\end{center}
\caption{The local non-commutative exchange relations \\
$c_{s,r} = {\red c_{s,s+1}c_{r+1,s+1}^{-1}c_{r+1,r}} + {\blue c_{s,r+1} c_{s+1,r+1}^{-1}c_{s+1,r}}
$.}
\label{fig:localexchange}
\end{figure}

Note that the definition of a frieze requires only certain triangle relations and non-commutative 
exchange relations to hold. One of the main results of \cite{CHJ24} states
that these local relations imply all triangle and exchange relations. 

\begin{Theorem}[\cite{CHJ24}, Theorem 7.1]
\label{thm:friezepolygon}
Let $c:\mathrm{diag}(\mathcal{P})\to D^{\ast}$ be a non-commutative frieze on a polygon $\mathcal{P}$
over a ring $D$. Then $c$ satisfies all triangle relations and all non-commutative exchange relations.
\end{Theorem}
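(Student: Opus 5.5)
The strategy is to reduce everything to one statement: \emph{deleting a single vertex from a non-commutative frieze again gives a non-commutative frieze} on the smaller polygon, with the restricted values $c_{r,s}$. Once this holds, repeated deletion (and rotational symmetry of the definitions) shows that for any $3\le k\le m$ and any vertices $v_1<\dots<v_k$ of $\mathcal{P}$ in cyclic order, the restriction of $c$ to the $k$-gon $v_1,\dots,v_k$ is a frieze. For distinct $r,s,t$ we take the $3$-gon on these vertices. Its local triangle relation \eqref{eq:localtriangle} is exactly the triangle relation \eqref{eq:triangle}, possibly after relabelling; this uses that the triangle relation is invariant under cyclic relabelling and under reversal, which is immediate from its shape. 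For a quadrangle $r,s,t,u$ we take the $4$-gon on these vertices. There the vertices are consecutive, so the four local exchange relations of Definition \ref{def:nclocal}(b) are precisely the exchange relations \eqref{eq:ncexchange} for the four diagonals. So the theorem follows from the vertex-deletion statement, which I would prove for the vertex $m-1$; the general case follows by rotation.

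Let $\mathcal{P}'$ be the $(m-1)$-gon on $0,\dots,m-2$. Its only new consecutive pair is $(m-2,0)$. Hence the new local relations to verify are: the local triangle relations for the triples $(m-3,m-2,0)$ and $(m-2,0,1)$, and the local exchange relations for the quadrangles $r,r+1,m-2,0$. In $\mathcal{P}$ these are ``gap one'' relations, involving vertices whose separations are at most two.

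So the core step is a lemma. For vertices $a, a+1, a+2$ and any further vertex $x$, the triangle relation for $(a,a+2,x)$ and the exchange relation for the quadrangle $x, x+1, a, a+2$ follow from the local relations. I would prove it as follows.

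\begin{enumerate}
\item First derive the triangle relation for $(a,a+2,x)$. Take the local exchange relation expressing $c_{a,x}$ (and $c_{x,a}$) through the quadrangle $a,a+1,x,x+1$, and the corresponding one for $a+2$. Eliminate $a+1$ using the local triangle relation at $a,a+1,a+2$. This rewrites $c_{a,x}$ via a two-step exchange.
\item Compare the two sides of the desired triangle relation. The key manipulation is to move inverses past each other using the local triangle relations, in the form $c_{r+1,r+2}c_{r,r+2}^{-1}c_{r,r+1}=c_{r+1,r}c_{r+2,r}^{-1}c_{r+2,r+1}$. This lets one rewrite products such as $c_{s,u}^{-1}c_{s,r}$ through their ``other orientation''.
\item Then obtain the gap-one exchange relation. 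Sum the two local exchange relations for the quadrangles $x,x+1,a,a+1$ and $x,x+1,a+1,a+2$, and substitute the expression of $c_{\cdot,a+1}$ coming from the quadrangle $a,a+1,a+2$ together with $x$.
\end{enumerate}

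This is an induction on the distance between vertices, mirroring the classical proof that local Ptolemy relations in a Conway--Coxeter frieze imply all Ptolemy relations.

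I expect the main obstacle to be this non-commutative bookkeeping in the gap-one exchange relation. Without commutativity, cancellation requires choosing at each step which orientation of each diagonal to use, so that the inverses line up. My first attempt would be to normalise every product into the form $c_{p,q}c_{r,q}^{-1}c_{r,s}$ (a ``path'' from $p$ to $s$ through $q,r$) and to apply local triangle relations only to rewrite middle segments. If this becomes unwieldy, the fallback is the quasideterminant viewpoint of Berenstein--Retakh: encode the frieze by $2\times 2$ transfer matrices whose products around the polygon are determined by the local relations. Each general relation then becomes an identity among products of these matrices, which holds automatically.
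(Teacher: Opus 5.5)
\textbf{There is a genuine gap.} The paper gives no proof of this theorem; it quotes it from \cite{CHJ24}. So I assess your proposal on its own terms.

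Your reduction is sound. If deleting a vertex always yields a frieze, then restricting to $3$-gons and $4$-gons gives every exchange relation, and every triangle relation, since the latter is invariant under permuting $r,s,t$ when all entries are invertible. Your list of new local relations after deleting $m-1$ is also correct. (In the proof of Theorem~\ref{thm:abcirc} the paper uses the theorem in the opposite direction, to get vertex deletion, so your reduction shows the two statements are equivalent.)

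The gap is that the core lemma, which carries all the content, is not proved, and the mechanism you describe for it does not work as stated. In step~3 you substitute ``the expression of $c_{\cdot,a+1}$ coming from the quadrangle $a,a+1,a+2$ together with $x$''. A quadrangle $a,a+1,a+2,y$ has the local form $r,r+1,s,s+1$ only for $y\in\{a-1,a+3\}$. For other $y$ its exchange relations are exactly the non-local relations the theorem asserts, so this step is circular.

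Concretely, substitute the local relation for $c_{a+1,x}$ (quadrangle $x,x+1,a+1,a+2$) into the local relation for $c_{a,x}$ (quadrangle $x,x+1,a,a+1$). Then the desired relation for $c_{a,x}$ in the quadrangle $x,x+1,a,a+2$ becomes equivalent to the ``fan'' relation
\[
c_{a,a+2}=c_{a,y}c_{a+1,y}^{-1}c_{a+1,a+2}+c_{a,a+1}c_{y,a+1}^{-1}c_{y,a+2}\qquad (y=x+1).
\]
The other three diagonals reduce the same way, either to this relation with $y=x$, or to its mirror $c_{a+2,a}=c_{a+2,y}c_{a+1,y}^{-1}c_{a+1,a}+c_{a+2,a+1}c_{y,a+1}^{-1}c_{y,a}$ with $y\in\{x,x+1\}$.

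\textbf{Missing idea for the exchange part.} Prove these fan relations for all $y\notin\{a,a+1,a+2\}$ by induction on $y$.
\begin{itemize}
\item Base case $y=a+3$: here they are local exchange relations of the quadrangle $a,\dots,a+3$.
\item Step $y\to y+1$ (with $y+1\neq a$): use the local relation for $c_{a,y}$ in the quadrangle $a,a+1,y,y+1$ and the local relation for $c_{y+1,a+2}$ in the quadrangle $a+1,a+2,y,y+1$. They give
\[
c_{a,y}c_{a+1,y}^{-1}-c_{a,y+1}c_{a+1,y+1}^{-1}=c_{a,a+1}c_{y+1,a+1}^{-1}c_{y+1,y}c_{a+1,y}^{-1},
\]
\[
c_{y+1,a+1}^{-1}c_{y+1,a+2}-c_{y,a+1}^{-1}c_{y,a+2}=c_{y+1,a+1}^{-1}c_{y+1,y}c_{a+1,y}^{-1}c_{a+1,a+2}.
\]
Hence the right-hand sides of the fan relation for $y$ and for $y+1$ coincide.
\end{itemize}
The mirror relation is handled identically, and no triangle relations are needed anywhere in this induction.

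\textbf{Triangle part.} Your step~1 aims at the wrong target. Vertex deletion needs the triangle relation for $(a,a+2,x)$ only for $x=a+3$ and $x=a-1$, and for these the quadrangle $a,a+1,x,x+1$ is not the relevant one. Instead:
\begin{itemize}
\item Solve the local exchange relations for $c_{a,a+2}$ and $c_{a+2,a}$ in the quadrangle $a,\dots,a+3$ for $c_{a,a+3}$ and $c_{a+3,a}$.
\item Substitute these into $c_{a,a+2}c_{a+3,a+2}^{-1}c_{a+3,a}=c_{a,a+3}c_{a+2,a+3}^{-1}c_{a+2,a}$.
\item Cancel using the local triangle relation at $a+1,a+2,a+3$. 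The difference of the two sides becomes exactly the local triangle relation at $a,a+1,a+2$.
\item The case $x=a-1$ follows by reflecting the polygon.
\end{itemize}

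With these two ingredients your route goes through. As written, though, the proposal names the obstacle without overcoming it, and the quasideterminant fallback is not an argument.
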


\begin{Remark}[\cite{CHJ24}, Section 4]
Given a non-commutative frieze $c:\mathrm{diag}(\mathcal{P})\to D^{\ast}$, the values $c((r,s))=c_{r,s}$
can be arranged in a pattern of numbers as in Figure \ref{fig:pattern}. 
This is a non-commutative analogue of a frieze pattern (with coefficients), as defined by Coxeter \cite{Cox71}
and later studied in numerous papers; see the survey \cite{MG15} for an overview and \cite{CHJ20}
for frieze patterns with coefficients.

For such a resulting
{\em non-commutative frieze pattern} we often use the notation $\mathcal{C}=(c_{r,s})$.  
By a slight abuse of notation we also use this in the sequel as a short-hand notation for a non-commutative frieze 
$c:\mathrm{diag}(\mathcal{P})\to D^{\ast}$, $c((r,s))=c_{r,s}$. 
\begin{figure} 
$$\begin{array}{ccccccccccccc}
0~~ & c_{0,1} & c_{0,2} & c_{0,3}~~~ & \ldots & c_{0,m-1} & 0 & & & & & &\\
~~ \\
& 0 & c_{1,2} & c_{1,3} & \ldots & c_{1,m-1} & c_{1,0} & 0 & & & & &\\
~~ \\
& & 0 & c_{2,3} & \ldots & c_{2,m-1} & c_{2,0} & c_{2,1} & ~~~~~0 & & & &\\
~~\\
& &  & \ddots  &  \ddots & \vdots & \vdots &  \vdots & \ddots & \ddots & & & \\
~~\\
& & &  & \ddots & c_{m-2,m-1} & c_{m-2,0} & c_{m-2,1} & \ldots  &  c_{m-2,m-3} & 0 & & \\
~~\\
& & & &  &   0 &  c_{m-1,0} &  c_{m-1,1} & \ldots & \ldots &  
c_{m-1,m-2} & ~~~~~~~~0 &  \\
\end{array}
$$
\caption{A non-commutative frieze pattern.}
 \label{fig:pattern}
\end{figure}
 
\end{Remark}

\begin{Example}[\cite{CHJ24}, Example 4.2] \label{ex:quat}
Here is an example of a non-commutative frieze pattern over the quaternions:
$$
\begin{array}{cccccccccccc}
0 & ~1~ & ~~\mathrm{i}~~ & 1 - \mathrm{k} & -\mathrm{i} - 2\mathrm{j} & 1 & 0 & \sps & \sps & \sps & \sps & \sps \\
    & 0 & 1 & -2\mathrm{i} - \mathrm{j} & 3\mathrm{k} & -\mathrm{i} + \mathrm{j} & 1 & 0 &    &    &    &   \\
    &    & 0 & 1 & \mathrm{i} - \mathrm{j} & \mathrm{k} & \mathrm{i} & 1 & 0 &    &    &   \\
    &    &    & 0 & 1 & \mathrm{j} & 1 + \mathrm{k} & -2\mathrm{i} - \mathrm{j} & 1 & 0 &    &   \\
    &    &    &    & 0 & 1 & -\mathrm{i} - 2\mathrm{j} & -3\mathrm{k} & ~\mathrm{i} - \mathrm{j}~ & 1 & 0 &   \\
\sps & \sps & \sps & \sps & \sps & 0 & 1 & -\mathrm{i} + \mathrm{j} & -\mathrm{k} & ~\mathrm{j}~ & ~1~ & 0
\end{array}
$$
This non-commutative frieze pattern has height 3, i.e.\ it corresponds to a non-commutative frieze on
a hexagon. Note that for all values on diagonals of length 3 we have $c_{r,r+3}\neq c_{r+3,r}$.
\end{Example}

In this article we will mainly be concerned with non-commutative friezes for which all boundary entries are 
equal to 1. Also in the classic situation of friezes this is by far the most commonly studied case, starting with the
classic result of Conway and Coxeter about a bijection between friezes over $\mathbb{N}$ and triangulations
of polygons \cite{CC73}.

\begin{Proposition} \label{rem:equalquddity}
Let $D$ be a ring and let $\mathcal{C}=(c_{r,s})$ be a non-commutative frieze
of height $n\ge 1$ over $D$, such that all boundary entries are equal to 1 (i.e. 
$c_{r,r+1}=c_{r+1,r}$ for all $r$). Then for all $r$ we have $c_{r,r+2}=c_{r+2,r}$. 
\end{Proposition}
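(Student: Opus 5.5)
The plan is to apply the local triangle relation (\ref{eq:localtriangle}) to the three consecutive vertices $r,r+1,r+2$ and simplify it using the hypothesis on the boundary entries. Here ``all boundary entries are equal to 1'' means
\[
c_{r,r+1}=c_{r+1,r}=1 \quad\text{for all } r,
\]
with indices read modulo $m=n+3$. The condition $n\ge 1$ guarantees $m\ge 4$. Hence $r$ and $r+2$ are not adjacent, so $(r,r+2)$ is an honest diagonal whose two values $c_{r,r+2}$ and $c_{r+2,r}$ are a priori unrelated. It also guarantees that the triangle $r,r+1,r+2$ is a genuine triangle of distinct vertices, so Definition \ref{def:nclocal}(a) applies to it.

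The local triangle relation for $r,r+1,r+2$ reads
\[
c_{r+1,r+2}\,c_{r,r+2}^{-1}\,c_{r,r+1} = c_{r+1,r}\,c_{r+2,r}^{-1}\,c_{r+2,r+1}.
\]
The four boundary entries appearing in it are
\[
c_{r+1,r+2},\quad c_{r,r+1},\quad c_{r+1,r},\quad c_{r+2,r+1},
\]
and all of them equal $1$ by hypothesis. The relation therefore collapses to
\[
c_{r,r+2}^{-1}=c_{r+2,r}^{-1}.
\]
Both elements are invertible, since a frieze takes values in $D^{\ast}$. Taking inverses gives $c_{r,r+2}=c_{r+2,r}$, and this holds for every $r$.

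There is no real obstacle in this argument. The points that need care are bookkeeping:
\begin{itemize}
\item checking that every entry other than $c_{r,r+2}$ and $c_{r+2,r}$ in the local triangle relation is a boundary entry;
\item handling the cyclic indexing, so that the argument also covers the wrap-around triangles such as $m-2,m-1,0$ and $m-1,0,1$;
\item using $n\ge1$, which is needed only to ensure that these triangles exist with distinct vertices and that $(r,r+2)$ is a diagonal.
\end{itemize}

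If one preferred not to rely on the local relations directly, Theorem \ref{thm:friezepolygon} gives the full triangle relation (\ref{eq:triangle}) for $r,r+1,r+2$. The same cancellation works there too.
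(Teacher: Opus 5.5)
Your proof is correct and is essentially the paper's argument. The paper also takes the triangle relation for the vertices $r,r+1,r+2$, written in the vertex order $c_{r,r+2}c_{r+1,r+2}^{-1}c_{r+1,r}=c_{r,r+1}c_{r+2,r+1}^{-1}c_{r+2,r}$, and lets the boundary $1$'s cancel; your use of the local form (\ref{eq:localtriangle}) is slightly more direct, since it is part of the definition of a frieze and needs no appeal to Theorem \ref{thm:friezepolygon}.
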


\begin{proof}
In fact, we consider the triangle relation for the triangle with vertices
$r,r+1,r+2$, this has the form
$$c_{r,r+2}c_{r+1,r+2}^{-1}c_{r+1,r}=c_{r,r+1}c_{r+2,r+1}^{-1}c_{r+2,r}.
$$
Since all boundary entries are equal to 1 by assumption, this simplifies to
$c_{r,r+2}=c_{r+2,r}.$
\end{proof}

\begin{Example} \label{ex:height1}
Non-commutative friezes of height 1 over a ring $D$ are maps 
$c:\mathrm{diag}(\mathcal{P})\to D^{\ast}$ on the diagonals of a quadrangle $\mathcal{P}$
(see Definition \ref{def:nclocal}\,(c)). Suppose that all boundary values of $c$ are 1.
Labelling the vertices of $\mathcal{P}$ by 0,1,2,3 we have $a:=c_{0,2}=c_{2,0}$ and 
$b:=c_{1,3}=c_{3,1}$ by Proposition \ref{rem:equalquddity}.

The triangle relations are clearly satisfied, the exchange relations have the form
$$ a = c_{0,2} = 1\cdot c_{3,1}^{-1}\cdot 1 + 1\cdot c_{1,3}^{-1}\cdot 1 = 2b^{-1}.
$$
Hence a non-commutative frieze over $D$ on a quadrangle (with all boundary values 1) is given by an ordered pair 
$(a,b)\in D^2$ such that $ab=2$. 
\end{Example}

\begin{Example} \label{ex:ncfriezes-1-2-3}
We give examples of non-commutative friezes (with
all boundary values 1) for
small heights over certain quaternion algebras. 
\begin{enumerate}
\item[{(1)}] In Example \ref{ex:height1} we have 
seen that non-commutative friezes of height 1 are
given by ordered pairs of elements whose product is
2. 

For the quaternion algebra $(-1,b)_{\mathbb{Q}}$
we have $2\mathrm{i}\cdot (-\mathrm{i})=2$,
yielding a non-commutative frieze of height 1
with non-trivial diagonal 
$\ldots, 2\mathrm{i},-\mathrm{i},
2\mathrm{i},-\mathrm{i},2\mathrm{i},-\mathrm{i}
\ldots$.

Similarly, for the quaternion algebra 
$(a,-1)_{\mathbb{Q}}$
we have $2\mathrm{j}\cdot (-\mathrm{j})=2$,
yielding a non-commutative frieze of height 1
with non-trivial diagonal 
$\ldots, 2\mathrm{j},-\mathrm{j},
2\mathrm{j},-\mathrm{j},2\mathrm{j},-\mathrm{j}
$.
\item[{(2)}] For the quaternion algebra $(-2,b)_{\mathbb{Q}}$
we have $\mathrm{i}\cdot (-\mathrm{i})=2$,
yielding a non-commutative frieze of height 1
with non-trivial diagonal 
$\ldots, \mathrm{i},-\mathrm{i},
\mathrm{i},-\mathrm{i},\mathrm{i},-\mathrm{i}
\ldots$.

Similarly, for the quaternion algebra 
$(a,-2)_{\mathbb{Q}}$
we have $\mathrm{j}\cdot (-\mathrm{j})=2$,
yielding a non-commutative frieze of height 1
with non-trivial diagonal 
$\ldots, \mathrm{j},-\mathrm{j},
\mathrm{j},-\mathrm{j},\mathrm{j},-\mathrm{j}
$.
\item[{(3)}] For the quaternion algebra
$(-3,b)_{\mathbb{Q}}$ we have 
$\mathrm{i}\cdot (-\mathrm{i})=3$. This 
leads to a non-commutative frieze of height 3
by setting
$$c_{r+2,r}=c_{r,r+2} = \left\{
\begin{array}{cl} \mathrm{i} & \text{ if $r$ is even} \\
-\mathrm{i} & \text{ if $r$ is odd}
\end{array} \right. 
$$
and $c_{r+3,r}=c_{r,r+3}=2$ for all $r$. 
It is straightforward to check that all local 
triangle and exchange relations are satisfied. 
The corresponding non-commutative frieze pattern has the 
form
$$\begin{array}{cccccccccccc}
0 & 1 & \mathrm{i} & 2 & \mathrm{i} & 1 & 0 & & & & & \\
& 0 & 1 & -\mathrm{i} & 2 & -\mathrm{i} & 1 & 0 & & & & \\
& & 0 & 1 & \mathrm{i} & 2 & \mathrm{i} & 1 & 0 & & & \\
& & & 0 & 1 & -\mathrm{i} & 2 & -\mathrm{i} & 1 & 0 & & \\
& & & & 0 & 1 & \mathrm{i} & 2 & \mathrm{i} & 1 & 0 & \\
& & & & & 0 & 1 & -\mathrm{i} & 2 & -\mathrm{i} & 1 & 0
\end{array}
$$
\end{enumerate}
Similarly, for the quaternion algebra 
$(a,-3)_{\mathbb{Q}}$ we have a 
non-commutative frieze of height 3 with 
$\mathrm{j}$ instead of $\mathrm{i}$.
\end{Example}
\smallskip

A crucial feature of non-commutative friezes is that for any pair of vertices $r,s$ of the polygon we have 
two values $c_{r,s}$ and $c_{s,r}$. In the non-commutative frieze pattern over the quaternions $\mathbb{H}$
in Example \ref{ex:quat} we have seen that these values might be different (even if all boundary values are 1). 
However, in the above Example \ref{ex:quat}, the values $c_{r,r+3}$ and $c_{r+3,r}$ still have the same norm.
A priori, this looked like an artefact of this example, but,  
surprisingly, it is true for all non-commutative friezes with 1's on the boundary over arbitrary normed division rings.

\begin{Lemma} \label{lem:samenorm}
Let $D$ be a normed division ring and let $\mathcal{C}=(c_{r,s})$ be a non-commutative
frieze
of height $n\ge 1$ over $D\setminus \{0\}$, such that all boundary entries are equal to 1. Then for all $r,s$ we have
$\lVert c_{r,s}\rVert = \lVert c_{s,r}\rVert$. 
\end{Lemma}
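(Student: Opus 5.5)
Our plan is to apply the full triangle relations from Theorem \ref{thm:friezepolygon} to triangles with two consecutive vertices, and then induct on the distance between $r$ and $s$. Since the norm is multiplicative by (N2) and $\lVert x^{-1}\rVert=\lVert x\rVert^{-1}$, every triangle relation $c_{r,s}c_{t,s}^{-1}c_{t,r} = c_{r,t}c_{s,t}^{-1}c_{s,r}$ turns into a relation between positive real numbers. Write $N_{r,s}=\lVert c_{r,s}\rVert$. Taking norms of the triangle relation gives
$$N_{r,s}N_{t,r}N_{s,t}=N_{r,t}N_{s,r}N_{t,s}.$$
Equivalently, the ratio $\rho(r,s):=N_{r,s}/N_{s,r}$ satisfies the cocycle identity $\rho(r,s)\rho(s,t)\rho(t,r)=1$ for all distinct $r,s,t$.

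We now induct on the cyclic distance. For adjacent vertices we have $\rho(r,r+1)=1$, because all boundary entries equal $1$. The case $s=r+2$ is also settled, by Proposition \ref{rem:equalquddity} (equivalently, by the cocycle identity for the triangle $r,r+1,r+2$). For general $s$, we apply the cocycle identity to the triangle $r, r+1, s$:
$$\rho(r,r+1)\rho(r+1,s)\rho(s,r)=1.$$
Since $\rho(r,r+1)=1$, this gives $\rho(r,s)=\rho(r+1,s)$. Iterating, $\rho(r,s)=\rho(r+1,s)=\dots=\rho(s-1,s)=1$, walking $r$ forward around the polygon until it becomes adjacent to $s$. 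This yields $N_{r,s}=N_{s,r}$. The argument needs the triangle relation for non-consecutive triples, which is exactly what Theorem \ref{thm:friezepolygon} supplies. Without it we would only have the local triangle relations, which involve three consecutive vertices.

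The only point requiring care is that all entries are nonzero. This holds because the frieze takes values in $D\setminus\{0\}=D^\ast$, so the norms are positive and the ratios $\rho$ are well defined. We expect no real obstacle beyond invoking Theorem \ref{thm:friezepolygon}. If one wanted to avoid it, the harder route would be to derive the needed relations from the local exchange relations by induction, and that would be the main difficulty.
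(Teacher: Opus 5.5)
Your proposal is correct and is essentially the paper's argument. The paper also takes norms in the triangle relation for the vertices $r,r+1,r+\ell+1$ (available via Theorem~\ref{thm:friezepolygon}) and inducts on the distance between the vertices, which is exactly your step $\rho(r,s)=\rho(r+1,s)$ iterated until the two vertices are adjacent; the cocycle formulation is a tidy repackaging of that induction.
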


\begin{proof}
We fix $r$ and proceed by induction on $\ell$ to show that 
$\lVert c_{r,r+\ell}\rVert = \lVert c_{r+\ell,r}\rVert$ for all 
$\ell\ge 0$ (i.e. we induct on the distance between the vertices). 
This is clear for $\ell=0$, it holds for $\ell=1$, since 
$c_{r,r+1}=1=c_{r+1,r}$
by assumption, and it holds for $\ell=2$ by Proposition
\ref{rem:equalquddity}.  

Assume as induction hypothesis, that for all vertices $s$ we have
$\lVert c_{s,s+\ell}\rVert=\lVert c_{s+\ell,s}\rVert$ for some $\ell$ (i.e. the claim holds for any vertices with distance $\ell$). The triangle relation
for the triangle with vertices $r,r+1,r+\ell+1$ reads
$$ c_{r,r+\ell+1}c_{r+1,r+\ell+1}^{-1}c_{r+1,r}=c_{r,r+1}c_{r+\ell+1,r+1}^{-1}c_{r+\ell+1,r}.
$$
Using the assumption that the boundary entries are 1 and taking norms we obtain
$$\lVert c_{r,r+\ell+1}\rVert \cdot \lVert c_{r+1,r+\ell+1}\rVert^{-1} = \lVert c_{r+\ell+1,r+1}\rVert^{-1}
\cdot \lVert c_{r+\ell+1,r}\rVert.
$$
We can apply the induction hypothesis to $c_{r+1,r+\ell+1}$, as the indices have distance $\ell$, hence
$\lVert c_{r+1,r+\ell+1}\rVert^{-1} = \lVert c_{r+\ell+1,r+1}\rVert^{-1}$ and cancelling this in the previous equation yields
$$\lVert c_{r,r+\ell+1}\rVert = \lVert c_{r+\ell+1,r}\rVert$$
which completes the induction step. 
\end{proof}
\medskip

\section{Finiteness for non-commutative friezes over normed division rings} \label{sec:finiteness}

A fundamental question when studying friezes over a given set of numbers is whether there are finitely or infinitely 
many friezes with entries in this set, for any given height. The following result gives an upper bound
on the norms of the entries of the quiddity sequence of a non-commutative frieze over a normed
division ring. It generalizes \cite[Lemma 6.1]{CHJ20} from 
classic friezes with coefficients over complex numbers 
to non-commutative friezes with coefficients over 
normed division rings.
Actually, the proof of \cite[Lemma 6.1]{CHJ20}
can largely be transferred to the non-commutative setting.
However, we decided to present here a simpler proof
leading to a better bound
which is based on discussions
with ChatGPT 6.0 Pro on this result for commutative friezes; 
we then generalized this approach to non-commutative friezes, leading to the following result.

\begin{Theorem} \label{lem:finite}
Let $D$ be a normed division ring. 
Let $R\subseteq D \setminus\{0\}$ be a subset such that 
$$M:=\inf\{\lVert x\rVert\,:\,x\in R\}>0.
$$
Let $\mathcal{C}=(c_{i,j})$ be a non-commutative frieze 
on an $(n+3)$-gon with 
$n\ge 1$
over $R$ and set
$$P= \max \{ \lVert c_{i,i+1}\rVert, \lVert c_{i+1,i}\rVert \,:\, 0\le i\le n+2\},
$$
the maximal value of the norm of a boundary entry. 
Then the norm of every quiddity entry $c_{j,j+2}$ and $c_{j+2,j}$ of $\mathcal{C}$ is 
at most 
$$\frac{P^2(M+nP)}{M^2}.
$$
\end{Theorem}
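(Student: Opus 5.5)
The plan is to imitate the classical telescoping argument for the quiddity $a_j=c_{j,j+2}$. In a commutative frieze with $1$'s on the boundary one writes
$$a_j=\frac{c_{j,j-1}}{c_{j+1,j-1}}+\sum_k\frac{1}{c_{j+1,k}c_{j+1,k+1}}.$$
This identity is obtained by telescoping the ratios $c_{j,k}/c_{j+1,k}$ along the row of vertex $j+1$, using the Ptolemy (determinant) relation. Each of the $n$ summands has absolute value at most $1/M^2$, and the first term has absolute value at most $1/M$. In the non-commutative setting I will do the same with one-sided quotients. Entries coming from boundary diagonals are bounded above by $P$. All other entries lie in $R$, so they have norm at least $M$, and their inverses have norm at most $1/M$.

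\emph{Step 1: the one-step identity.} Fix $j$ and a vertex $k$ with $k,k+1\notin\{j,j+1\}$. Apply the local non-commutative exchange relation (\ref{eq:nclocalexchange}) of Definition \ref{def:nclocal} to the quadrangle $j,j+1,k,k+1$, taking $r=j$ and $s=k$:
$$c_{k,j}=c_{k,k+1}c_{j+1,k+1}^{-1}c_{j+1,j}+c_{k,j+1}c_{k+1,j+1}^{-1}c_{k+1,j}.$$
Multiplying on the left by $c_{k,j+1}^{-1}$ gives
$$c_{k,j+1}^{-1}c_{k,j}-c_{k+1,j+1}^{-1}c_{k+1,j}=c_{k,j+1}^{-1}\,c_{k,k+1}\,c_{j+1,k+1}^{-1}\,c_{j+1,j}.$$
The right-hand side contains two boundary entries and two inverses of elements of $R$. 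Hence by (N2) its norm is at most $P^2/M^2$.

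\emph{Step 2: telescoping.} Let $k$ run over the $n$ vertices $j+2,j+3,\dots,j+n+1$; indices are taken modulo $n+3$, so $j+n+2=j-1$. Summing the identities telescopes to
$$c_{j+2,j+1}^{-1}c_{j+2,j}=c_{j-1,j+1}^{-1}c_{j-1,j}+\sum_{k=j+2}^{j+n+1}c_{k,j+1}^{-1}c_{k,k+1}c_{j+1,k+1}^{-1}c_{j+1,j}.$$
On the left, $c_{j+2,j+1}$ is a boundary entry, so the norm of the left side is at least $\lVert c_{j+2,j}\rVert/P$. The first term on the right involves the quiddity entry $c_{j-1,j+1}\in R$ and the boundary entry $c_{j-1,j}$, so its norm is at most $P/M$. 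By (N3) and Step 1 we therefore get
$$\lVert c_{j+2,j}\rVert\le P\Big(\frac PM+\frac{nP^2}{M^2}\Big)=\frac{P^2(M+nP)}{M^2}.$$
The degenerate case $n=1$ is covered, since there is exactly one $k$ and $k+1=j-1$.

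\emph{Step 3: the entry $c_{j,j+2}$.} For the other orientation I will run the same argument with the mirrored local exchange relation $c_{r,s}=c_{r,r+1}c_{s+1,r+1}^{-1}c_{s+1,s}+\dots$, using right multiplication instead of left. Equivalently, I can apply Step 2 to the frieze with reversed vertex order, which is again a non-commutative frieze (with the roles of $c_{r,s}$ and $c_{s,r}$ swapped), with the same $M$ and $P$.

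The main obstacle is bookkeeping rather than any single estimate. First, the relevant exchange relation must be oriented so that multiplying by one inverse on the \emph{same} side makes consecutive terms cancel exactly. Second, one must check that every inverted entry is a genuine element of $R$, i.e.\ never a zero diagonal. This requires $k\ne j+1$ and $k+1\ne j+1$, which is why $k$ stops at $j+n+1$ and the endpoint term involves the quiddity $c_{j-1,j+1}$.
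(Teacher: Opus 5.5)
Your proposal is correct and is essentially the paper's proof: with $j=n+2$, your quadrangles $j,j+1,k,k+1$ are exactly the paper's quadrangles $0,i,i+1,n+2$, and you telescope the same quantity $c_{i,0}^{-1}c_{i,n+2}$. This gives the same expression for $c_{1,n+2}$ (up to rewriting each summand by the triangle relation for $i,i+1,0$) and hence the same bound. The only difference is that you invoke the local exchange relation for the diagonal $(k,j)$ directly, so the telescoping step needs just one left multiplication, whereas the paper starts from the exchange relation for the other diagonal $(0,i+1)$ and needs two triangle relations to reach the same identity; your Step~3 also makes explicit the reflection symmetry that the paper uses tacitly.
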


\smallskip

\begin{proof}
We denote the vertices of the $(n+3)$-gon by $0,1,\ldots,n+2$. By symmetry it suffices to consider the
quiddity entry $c_{1,n+2}$. For every $i=1,\ldots,n$
the non-commutative exchange 
relation for the quadrangle with vertices $0,i,i+1,n+2$
yields
$$c_{0,i+1} = c_{0,i}c_{n+2,i}^{-1}c_{n+2,i+1} + 
c_{0,n+2}c_{i,n+2}^{-1} c_{i,i+1}.
$$
By multiplication with $c_{i,i+1}^{-1}c_{i,n+2}$ from the
right and then using the triangle relation for the triangle 
with vertices $i,i+1,n+2$ we can deduce that
\begin{eqnarray*}
c_{0,i+1}c_{i,i+1}^{-1}c_{i,n+2} & = & 
c_{0,i}c_{n+2,i}^{-1}c_{n+2,i+1}c_{i,i+1}^{-1}c_{i,n+2} + 
c_{0,n+2} \\
& = & c_{0,i}c_{n+2,i}^{-1} c_{n+2,i}c_{i+1,i}^{-1} c_{i+1,n+2} + c_{0,n+2} \\
& = & c_{0,i}c_{i+1,i}^{-1} c_{i+1,n+2} + c_{0,n+2}
\end{eqnarray*}
We multiply this equation with $c_{0,i}^{-1}$ from the left
to get
\begin{equation} \label{eq:triangle1}
c_{0,i}^{-1}c_{0,i+1}c_{i,i+1}^{-1}c_{i,n+2} =
c_{i+1,i}^{-1} c_{i+1,n+2} + c_{0,i}^{-1}c_{0,n+2}.
\end{equation}
The triangle relation for the triangle with vertices
$i,i+1,0$ reads $c_{i,i+1}c_{0,i+1}^{-1}c_{0,i}
= c_{i,0}c_{i+1,0}^{-1}c_{i+1,i}$. Inverting this and
plugging it in in (\ref{eq:triangle1}) yields
$$c_{i+1,i}^{-1}c_{i+1,0}c_{i,0}^{-1} c_{i,n+2} = 
c_{i+1,i}^{-1} c_{i+1,n+2} + c_{0,i}^{-1}c_{0,n+2}.
$$
We multiply this equation with $c_{i+1,0}^{-1}c_{i+1,i}$
from the left and get
$$c_{i,0}^{-1} c_{i,n+2} -
c_{i+1,0}^{-1} c_{i+1,n+2} = 
c_{i+1,0}^{-1}c_{i+1,i}c_{0,i}^{-1}c_{0,n+2}.
$$
We sum these equations for $i=1,\ldots,n$ and observe
that on the left hand side a telescope sum appears 
where only the first and last entry remains. So we obtain
$$c_{1,0}^{-1}c_{1,n+2} - c_{n+1,0}^{-1}c_{n+1,n+2}
= \sum_{i=1}^n c_{i+1,0}^{-1}c_{i+1,i}c_{0,i}^{-1}
c_{0,n+2}.
$$
By multiplication with $c_{1,0}$ from the left we 
get an expression for the quiddity entry $c_{1,n+2}$
under consideration:
\begin{equation} \label{eq:c1n+2}
c_{1,n+2} = c_{1,0}c_{n+1,0}^{-1}c_{n+1,n+2}
+ c_{1,0}\left(\sum_{i=1}^n c_{i+1,0}^{-1}c_{i+1,i}c_{0,i}^{-1}\right)
c_{0,n+2}.
\end{equation}
By definition of $M$ and $P$ we know that the norm of
each boundary entry is at most $P$ and that for each 
entry of the frieze we have $\lVert c_{r,s}\rVert\ge M$,
hence $\lVert c_{r,s}^{-1}\rVert \le \frac{1}{M}$. 
Using this and applying the triangle inequality to
(\ref{eq:c1n+2}) we obtain
\begin{eqnarray*}
\lVert c_{1,n+2}\rVert & \le & 
\lVert c_{1,0}\rVert\cdot \lVert c_{n+1,0}^{-1}\rVert \cdot \lVert c_{n+1,n+2}\rVert
+ \lVert c_{1,0}\rVert \left(\sum_{i=1}^n \lVert c_{i+1,0}^{-1}\rVert \cdot \lVert c_{i+1,i}\rVert\cdot \lVert c_{0,i}^{-1}\rVert \right)
\lVert c_{0,n+2}\rVert \\
& \le & \frac{P^2}{M} + P\cdot 
\left(\sum_{i=1}^n \frac{P}{M^2}\right)\cdot P 
\,=\, \frac{P^2(M+nP)}{M^2} 
\end{eqnarray*}
as claimed in the theorem.
\end{proof}

\smallskip

As a main application of Theorem \ref{lem:finite} we can show for many subsets $R$ of a normed division 
ring that there are only finitely many non-commutative friezes over $R\setminus \{0\}$, for any given height. We shall need the 
following definition.

\begin{Definition} \label{def:normedsubset}
Let $D$ be a normed division ring. 
\begin{enumerate}
\item[{(i)}] A subset $R\subseteq D\setminus \{0\}$ is called {\em norm-discrete} if the set
$\{\lVert a\rVert\,|\,a\in R\}$ is a discrete subset of $\mathbb{R}_{\ge 0}$ (i.e. it has no accumulation
point with the usual topology). 
\item[{(ii)}] A subset $R\subseteq D\setminus \{0\}$ is called {\em norm-finite} if 
for every real number $N$ the set
$$\{a\in R\,|\, \lVert a\rVert\le N\}$$
is finite.  
\end{enumerate}
\end{Definition}

Note that any norm-finite subset is
norm-discrete. Moreover, every subset of a norm-finite subset is also norm-finite. 

\begin{Example} \label{ex:lipschitz}
In the normed division ring $\mathbb{H}$ of Hamilton quaternions the {\em Lipschitz quaternions}
$$\mathbb{H}_{\mathbb{Z}}:= \{\alpha +  \beta \mathrm{i}+\gamma\mathrm{j}+\delta\mathrm{k}\,|\,\alpha,\beta ,\gamma,\delta\in \mathbb{Z}\}
$$
and the {\em Hurwitz quaternions}
$$\mathbb{H}_{\frac{1}{2}\mathbb{Z}} :=
\{\alpha +\beta \mathrm{i}+\gamma\mathrm{j}+ \delta \mathrm{k}\,|\,\alpha,\beta,\gamma,\delta\in \mathbb{Z}\mbox{~or~}
\alpha,\beta,\gamma,\delta\in \frac{1}{2}+\mathbb{Z}\}
$$
are norm-finite subsets.  
\smallskip

For any integer $d\ge 2$, the subset 
$$\mathbb{H}_{\mathbb{Z}(\sqrt{d})} 
= \{\alpha +\beta\mathrm{i}+\gamma\mathrm{j}+\delta\mathrm{k}\,|\,\alpha,\beta,\gamma,\delta\in \mathbb{Z}(\sqrt{d})\}
$$
is not norm-discrete (and hence not norm-finite) since 
$\mathbb{Z}(\sqrt{d})\subseteq \mathbb{H}_{\mathbb{Z}(\sqrt{d})}$
contains elements with arbitrarily small norm. 
\end{Example}

\smallskip

We can then state our first application of Theorem \ref{lem:finite}, which generalizes \cite[Corollary 3.8]{CH19}.

\begin{Corollary} \label{cor:finitelymany}
Let $D$ be a normed division ring and $R\subseteq D\setminus \{0\}$ a norm-finite subset. 
Then for each $n\in \mathbb{N}$ there are only finitely many non-commutative friezes over $R$ of height $n$
with all boundary entries 1. 
\end{Corollary}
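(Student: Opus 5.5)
The plan is to combine the norm bound of Theorem \ref{lem:finite} with the fact that a non-commutative frieze with all boundary entries $1$ is determined by its quiddity entries.

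\textbf{Step 1 (uniform norm bound).} Since $R$ is norm-finite and contained in $D\setminus\{0\}$, the set $\{a\in R : \lVert a\rVert \le 1\}$ is finite. Each of its elements has strictly positive norm by (N1). Hence
$$M=\inf\{\lVert x\rVert : x\in R\}>0$$
(if $R$ is empty there is nothing to prove). For a frieze of height $n$ with all boundary entries $1$ we have $P=\lVert 1\rVert=1$, because (N2) gives $\lVert 1\rVert=\lVert 1\rVert^2$ and (N1) gives $\lVert 1\rVert\neq 0$. Theorem \ref{lem:finite} then bounds every quiddity entry $c_{j,j+2}=c_{j+2,j}$ (equal by Proposition \ref{rem:equalquddity}) by
$$B:=\frac{M+n}{M^2},$$
which depends only on $n$ and $R$. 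By norm-finiteness, each quiddity entry therefore lies in the finite set $R_B=\{a\in R : \lVert a\rVert\le B\}$. So there are at most $|R_B|^{n+3}$ possible quiddity sequences $(c_{0,2},c_{1,3},\dots,c_{n+2,1})$.

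\textbf{Step 2 (the quiddity sequence determines the frieze).} We show that the entries $c_{r,s}$ are determined by the boundary and quiddity entries, by induction on the distance between $r$ and $s$. Take the local exchange relation (\ref{eq:nclocalexchange}) in the form
$$c_{r,s+1}=c_{r,r+1}c_{s+1,r+1}^{-1}c_{s+1,s}+c_{r,s}\,c_{r+1,s}^{-1}c_{r+1,s+1}\cdot(\text{suitable arrangement}).$$
Use it in whichever of the four versions listed in Definition \ref{def:nclocal}(b) expresses the diagonal between $r$ and $s+1$ through diagonals of smaller distance.

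Concretely, consider the quadrangle $r,r+1,s,s+1$ with $s=r+\ell$. The relation for $c_{r,s+1}$ (with the roles of $r,s$ swapped appropriately) writes it as a sum of products of $c_{r,r+1}$, $c_{s+1,s}$, $c_{r+1,s}$, $c_{s,r+1}$, $c_{r,s}$, $c_{s+1,r+1}$ and their inverses. Among these, $c_{s+1,r+1}$ has distance $\ell$ going the other way. To avoid this, I would instead use the formula-derivation style of the proof of Theorem \ref{lem:finite}: the telescoping identity there expresses $c_{0,i+1}$ via $c_{0,i}$, $c_{0,i-1}$ and the quiddity entry, in the form of a non-commutative three-term recurrence. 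Alternatively, and more simply, I would use the exchange relation for the quadrangle $r,r+1,r+\ell,r+\ell+1$, solved for the longest diagonal $c_{r,r+\ell+1}$. In that relation all other diagonals have length at most $\ell$ when measured along the short side, since $c_{s+1,r+1}$ has distance $\ell$ in the other direction only as an index pair in the polygon. Running the induction simultaneously over both directions $c_{r,s}$ and $c_{s,r}$ handles this. The base cases are the boundary entries $1$ and the quiddity entries.

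\textbf{Conclusion and main obstacle.} Every frieze of height $n$ over $R$ with boundary $1$ is thus determined by a quiddity sequence in $R_B^{n+3}$, a finite set, which gives the claim. The main obstacle is Step 2: making precise that all entries are recursively determined by the quiddity entries in the non-commutative setting. I would cite or reprove the corresponding statement from \cite{CHJ24}, which gives that the frieze pattern is generated row by row from the first two rows via local relations whose denominators involve only entries of earlier rows. Step 1 is routine.
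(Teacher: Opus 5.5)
Your proposal is correct and follows the paper's own proof. The steps are the same: $M>0$ from norm-finiteness, Theorem \ref{lem:finite} with $P=1$ bounds the quiddity entries, norm-finiteness then leaves finitely many quiddity sequences, and a frieze with boundary $1$ is determined by its quiddity entries (a fact the paper simply asserts). In Step 2, your final argument works: a simultaneous induction on cyclic distance for both orientations, using the local relations of the quadrangle $r,r+1,s,s+1$. Your aside about the proof of Theorem \ref{lem:finite} is inaccurate, though, since that proof contains no three-term recurrence. The clean recurrence is $c_{r,s+1}=c_{r,s}c_{s-1,s+1}-c_{r,s-1}$, which is the exchange relation for the quadrangle $r,s-1,s,s+1$ and holds by Theorem \ref{thm:friezepolygon}.
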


\begin{proof}
The set $R$ is norm-finite, in particular it is norm-discrete, so 
the infimum $M=\inf\{\lVert x\rVert\,:\,x\in R\}$ is positive and the assumption of Theorem \ref{lem:finite}
is satisfied for $R$. Then Theorem \ref{lem:finite} implies that there is an upper bound for the quiddity entries
$c_{j,j+2}$ and $c_{j+2,j}$ of any non-commutative frieze $\mathcal{C}$ of height $n$. Since $R$ is 
norm-finite, there are only finitely many possible values for the quiddity entries. But every non-commutative
frieze is uniquely determined by its quiddity entries.
\end{proof}

\begin{Example}
\begin{enumerate}
\item[{(1)}]
There are precisely 40 non-commutative friezes of height 1 over the Lipschitz quaternions. 
 
The non-commutative friezes of height 1 are parametrized by ordered pairs
$(\alpha,\beta)\in \mathbb{H}_{\mathbb{Z}}$ such that $\alpha\beta=2$ (see Example \ref{ex:height1}). 
Taking norms 
this condition implies $\lVert \alpha\rVert\cdot \lVert \beta\rVert=2$  and from this it is straightfoward to produce a list
of such pairs. In total, there are 40 such pairs, namely the following 20 pairs and their 
shifts obtained by interchanging 
the two entries.
$$(1,2), (-1,-2), (\mathrm{i},-2\mathrm{i}), (-\mathrm{i},2\mathrm{i}),
(\mathrm{j},-2\mathrm{j}), (-\mathrm{j},2\mathrm{j}), 
(\mathrm{k},-2\mathrm{k}), (-\mathrm{k},2\mathrm{k}),
$$
$$
(1+\mathrm{i},1-\mathrm{i}), (-1-\mathrm{i},-1+\mathrm{i}),
(1+\mathrm{j},1-\mathrm{j}), (-1-\mathrm{j},-1+\mathrm{j}),
(1+\mathrm{k},1-\mathrm{k}), (-1-\mathrm{k},-1+\mathrm{k}),
$$
$$
(\mathrm{i}+\mathrm{j},-\mathrm{i}-\mathrm{j}),
(\mathrm{i}-\mathrm{j},-\mathrm{i}+\mathrm{j}),
(\mathrm{i}+\mathrm{k},-\mathrm{i}-\mathrm{k}),
(\mathrm{i}-\mathrm{k},-\mathrm{i}+\mathrm{k}),
(\mathrm{j}+\mathrm{k},-\mathrm{j}-\mathrm{k}),
(\mathrm{j}-\mathrm{k},-\mathrm{j}+\mathrm{k})
$$
\item[{(2)}] There are precisely 72 non-commutative friezes of height 1 over the Hurwitz quaternions. 

In addition to the 40 friezes from (1) we get the 32 friezes for the following 16 pairs and their shifts.
$$\left(\frac{1}{2} (\pm 1\pm \mathrm{i}\pm\mathrm{j}\pm \mathrm{k}),
\pm 1 \mp \mathrm{i}\mp \mathrm{j} \mp \mathrm{k}\right)
$$
\end{enumerate}
\end{Example}

\medskip

Our next goal is to show that for any non-commutative frieze with 1's on the boundary over a normed division ring there must be 
quiddity entries with small norm. We shall need the following matrices which play a crucial role in the theory
of friezes. 

\begin{Definition}
Let $R$ be a ring. For $c\in R$ we define the $\mu$-matrix as
$$\mu(c) := \begin{pmatrix} 0 & -1 \\ 1 & c \end{pmatrix}.
$$
\end{Definition}

For non-commutative friezes the following property of $\mu$-matrices has been shown in an earlier paper. 

\begin{Lemma}[\cite{CHJ24}, Corollary 6.8] \label{lem:etamatrices}
Let $\mathcal{C}=(c_{i,j})$ be a non-commutative frieze on an $m$-gon over some ring $R$ with all
boundary values equal to 1. Then
$$\prod_{r=1}^{m} \mu(c_{r-1,r+1}) = \begin{pmatrix} -1 & 0 \\ 0 & -1 \end{pmatrix}.
$$ 
\end{Lemma}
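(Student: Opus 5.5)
The plan is to transfer the matrix identity from the classical setting, where the Conway–Coxeter relation between the quiddity sequence and the $\mu$-matrices is proved through the $SL_2$-tiling, that is, through the frieze entries along a fixed row. Fix a base vertex, say $0$. I would show by induction on $k$ that
$$\mu(c_{0,2})\mu(c_{1,3})\cdots\mu(c_{k-2,k}) \;=\; \begin{pmatrix} -x_{k-1} & -x_k \\ y_{k-1} & y_k\end{pmatrix},$$
where $x_j$ and $y_j$ are suitable entries from the frieze. Since the product starts with $r=1$, i.e.\ $\mu(c_{0,2})$, the natural guess is that the bottom row consists of entries $c_{0,j}$ and the top row of entries $c_{1,j}$, possibly twisted by boundary factors. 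Because all boundary values are $1$, I would aim for $y_j = c_{0,j}$ and $x_j = c_{1,j}$, up to the sign bookkeeping forced by $\mu$, with the conventions $c_{0,0}=0$, $c_{0,1}=1$, $c_{1,1}=0$ and $c_{1,0}=1$.

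The inductive step multiplies on the right by $\mu(c_{k-1,k+1})$. This sends a row $(u,v)$ to $(v,\,-u+v\,c_{k-1,k+1})$. So what is needed is a three-term recurrence
$$c_{0,k+1} \;=\; c_{0,k}\,c_{k-1,k+1} - c_{0,k-1},$$
valid in the non-commutative setting with coefficients multiplied on the right, and the analogous recurrence for the row of vertex $1$. To derive it, I would take the exchange relation for the quadrangle $0,k-1,k,k+1$. By Theorem~\ref{thm:friezepolygon} every exchange relation holds, which gives
$$c_{0,k+1}\;\text{or}\;c_{0,k-1}\text{ in terms of } c_{0,k}\,c_{k-1,k}^{-1}\cdots .$$
Substituting the boundary values $1$ and using triangle relations on the triangles $(0,k-1,k)$ and $(k-1,k,k+1)$, together with Proposition~\ref{rem:equalquddity}, should rearrange this into exactly the recurrence above. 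This computation is the first place where the order of factors matters. I expect the cleanest route is the identity derived in the proof of Theorem~\ref{lem:finite}, specialised to boundary $1$: it reads $c_{i,0}^{-1}c_{i,n+2}-c_{i+1,0}^{-1}c_{i+1,n+2}=c_{i+1,0}^{-1}c_{0,i}^{-1}c_{0,n+2}$. Still, one of the two sign/order conventions has to be chosen carefully so that the quiddity entry ends up on the right.

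Once the recurrence holds for $k=1,\dots,m$, running the induction around the whole polygon gives the full product $\prod_{r=1}^{m}\mu(c_{r-1,r+1})$ with rows built from $c_{0,m-1},c_{0,m}$ and $c_{1,m-1},c_{1,m}$. Indices are read cyclically, so $c_{0,m}=c_{0,0}=0$ and $c_{1,m}=c_{1,0}=1$. The extended sequence must also wrap correctly, namely $c_{0,m-1}=1$ and $c_{1,m-1}$ equal to the appropriate boundary term. Here I would take care with the zero diagonals of the pattern in Figure~\ref{fig:pattern}: one checks that the recurrence still holds when it passes through the zeros, using the exchange relation for the degenerate quadrangles, or simply by defining the terms past the ends. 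Substituting these values should produce $-\mathrm{id}$.

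I expect the main obstacle to be the non-commutative rearrangement in the inductive step. Specifically, one has to verify that the factor $c_{k-1,k+1}$ lands on the right of $c_{0,k}$ and appears with no leftover conjugation by inverse entries. If a conjugation persists, I would instead prove the statement for a twisted version of the rows, for example $c_{0,j}$ multiplied by $c_{j,0}^{-1}$-type factors as in the telescoping identity, and check that these twists become $1$ at the wrap-around points.
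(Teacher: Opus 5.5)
Your set-up is the right one. It is the mechanism the paper itself uses in its Appendix (the lemma is only quoted here from earlier work). The identification $\mu(c_{0,2})\cdots\mu(c_{k-2,k})=\begin{pmatrix}-c_{1,k-1}&-c_{1,k}\\ c_{0,k-1}&c_{0,k}\end{pmatrix}$ is \eqref{eq:mucontinuants} combined with the row recurrence \eqref{eq:propagation}. The inductive step is also simpler than you fear. With boundary values $1$, the exchange relation for the diagonal $(0,k)$ of the quadrangle $0,k-1,k,k+1$ reads $c_{0,k}=c_{0,k+1}c_{k-1,k+1}^{-1}+c_{0,k-1}c_{k+1,k-1}^{-1}$. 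The identity $c_{k+1,k-1}=c_{k-1,k+1}$ (Proposition~\ref{rem:equalquddity}) turns this literally into $c_{0,k+1}=c_{0,k}c_{k-1,k+1}-c_{0,k-1}$. No further triangle relations, twists, or the telescoping identity from Theorem~\ref{lem:finite} are needed. Note, however, that this only covers those $k$ for which the four vertices form a genuine quadrangle: $2\le k\le m-2$ for row $0$ and $3\le k\le m-1$ for row $1$.

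The gap is the closing step, which is exactly where the sign of the lemma is decided. Your claim that the recurrences hold for $k=1,\dots,m$ with indices read cyclically is false. The rows continued by the recurrence are antiperiodic, not periodic.
At the far end, $c_{0,m+1}=c_{0,m}c_{m-1,1}-c_{0,m-1}=-1\neq c_{0,1}$.
At the near end, your convention $c_{1,0}=1$ would force $c_{1,2}=c_{1,1}c_{0,2}-c_{1,0}=-1$; start instead from the single factor $\mu(c_{0,2})$.
Consequently, substituting cyclic values into the correctly indexed full product $\begin{pmatrix}-c_{1,m}&-c_{1,m+1}\\ c_{0,m}&c_{0,m+1}\end{pmatrix}$ gives $\begin{pmatrix}-1&0\\0&1\end{pmatrix}$, not $-\mathrm{id}$. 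With the indices $m-1,m$ that you actually wrote, you get the product of only $m-1$ factors, $\begin{pmatrix}-c_{1,m-1}&-1\\1&0\end{pmatrix}$, which is not $-\mathrm{id}$ either.

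There are no exchange relations for ``degenerate quadrangles''. What closes the cycle is Proposition~\ref{rem:equalquddity} at the seam: the last two factors involve $c_{m-2,0}$ and $c_{m-1,1}$, whereas the rows produce $c_{0,m-2}$ and $c_{1,m-1}$. Concretely, for $m\ge 4$ (the triangle is a direct check):
the genuine quadrangle $1,m-2,m-1,0$ gives $c_{1,m-1}c_{m-2,0}-c_{1,m-2}=c_{1,0}=1$;
the quiddity symmetry gives $c_{0,m-1}c_{m-2,0}-c_{0,m-2}=c_{m-2,0}-c_{0,m-2}=0$;
hence $\prod_{r=1}^{m-1}\mu(c_{r-1,r+1})=\begin{pmatrix}-c_{1,m-1}&-1\\1&0\end{pmatrix}$.
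Multiplying by $\mu(c_{m-1,1})$ yields $\begin{pmatrix}-1&c_{1,m-1}-c_{m-1,1}\\0&-1\end{pmatrix}$. This equals $-\mathrm{id}$ precisely because $c_{1,m-1}=c_{m-1,1}$. With this repair your argument is complete; as written, its last step does not produce $-\mathrm{id}$.
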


For proving that the quiddity sequence of any non-commutative frieze pattern contains some entries
with small norm we shall need the following non-commutative version 
of \cite[Lemma 3.1]{CH19}.

\begin{Lemma} \label{lem:cde}
Let $D$ be a normed division ring and for some integer $m\ge 3$ let $c_1,\ldots,c_m,d,e\in D$ with $\lVert c_1\rVert\ge 1$ and 
$\lVert dc_m-e\rVert > \lVert d\rVert$.  Moreover, suppose that 
$$\prod_{r=1}^m \mu(c_r) = \begin{pmatrix} \ast & \ast\\ d & e \end{pmatrix}
$$
(where each $\ast$ denotes an arbitrary entry). Then there exists an index
$j\in \{2,\ldots,m-1\}$ such that $\lVert c_j\rVert <2$.  
\end{Lemma}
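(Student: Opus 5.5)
The plan is to track only the bottom rows of the partial products $\prod_{r=1}^k \mu(c_r)$, which satisfy a second-order linear recursion of continuant type. Once the hypothesis $\lVert dc_m-e\rVert>\lVert d\rVert$ is rewritten in terms of that recursion, it says that the norms of the recursive sequence decrease at the last step. We then show that if all middle entries had norm at least $2$, the norms would be non-decreasing, which is a contradiction.

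\emph{Setting up the recursion.} For a row vector $(x,y)$ with entries in $D$ we have $(x,y)\,\mu(c)=(y,\,-x+yc)$. Write the bottom row of $\prod_{r=1}^k\mu(c_r)$ as $(y_{k-1},y_k)$. The bottom row of $\mu(c_1)$ is $(1,c_1)$, so we set $y_{-1}=0$, $y_0=1$, $y_1=c_1$. Multiplying on the right by $\mu(c_k)$ then gives
$$y_k = y_{k-1}c_k - y_{k-2}\qquad (k\ge 1).$$
The multiplication order matters here, since $D$ is non-commutative; $c_k$ must stay on the right. For $k=m$ this yields $d=y_{m-1}$ and $e=y_{m-1}c_m-y_{m-2}$, hence $dc_m-e=y_{m-2}$. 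The hypothesis therefore reads $\lVert y_{m-2}\rVert>\lVert y_{m-1}\rVert$. Since $m\ge 3$, both indices $m-2\ge 1$ and $m-1$ fall in the range where the recursion applies.

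\emph{Monotonicity argument.} Suppose, for a contradiction, that $\lVert c_j\rVert\ge 2$ for all $j\in\{2,\ldots,m-1\}$. We show by induction on $k$ that $\lVert y_k\rVert\ge\lVert y_{k-1}\rVert$ for $k=1,\ldots,m-1$.

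For $k=1$ this is $\lVert c_1\rVert\ge 1=\lVert y_0\rVert$, which is a hypothesis.

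For $2\le k\le m-1$, the reverse triangle inequality (a consequence of (N3)) together with (N2) gives
$$\lVert y_k\rVert\ge \lVert y_{k-1}\rVert\,\lVert c_k\rVert-\lVert y_{k-2}\rVert\ge 2\lVert y_{k-1}\rVert-\lVert y_{k-1}\rVert=\lVert y_{k-1}\rVert,$$
where the second step uses the induction hypothesis $\lVert y_{k-2}\rVert\le\lVert y_{k-1}\rVert$.

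In particular $\lVert y_{m-1}\rVert\ge\lVert y_{m-2}\rVert$, contradicting the rewritten hypothesis. Hence some $j\in\{2,\ldots,m-1\}$ has $\lVert c_j\rVert<2$.

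There is no real obstacle in this argument. The one step that needs care is getting the bookkeeping of the recursion right in the non-commutative setting: checking that right multiplication by $\mu(c_k)$ produces $y_{k-1}c_k$ with $c_k$ on the right, and that this matches exactly the expression $dc_m-e$ in the hypothesis.
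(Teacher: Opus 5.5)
Your proof is correct and is essentially the paper's argument. The paper also uses the fact that right multiplication of a row vector by $\mu(c)$ with $\lVert c\rVert\ge 2$ preserves ``norm of first entry $\le$ norm of second entry'', starting from $(0,1)\mu(c_1)=(1,c_1)$ and ending at $(dc_m-e,d)$ (obtained by multiplying with $\mu(c_m)^{-1}$); this is exactly your recursion $y_k=y_{k-1}c_k-y_{k-2}$ together with your identification $d=y_{m-1}$, $dc_m-e=y_{m-2}$.
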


\begin{proof}
Let $a,b\in D$ with $\lVert a\rVert \le \lVert b\rVert$ and 
$\lVert c\rVert \ge 2$. Then the reverse triangle inequality gives
\[ \lVert bc-a\rVert \ge \lVert bc\rVert -\lVert a\rVert = \lVert b\rVert (\lVert c\rVert -1)+\lVert b\rVert - \lVert a\rVert  \ge \lVert b\rVert (\lVert c\rVert -1)\ge \lVert b\rVert. \]
From this inequality and
\begin{equation} \label{eq:eta}
(a,b)\,\mu(c) = 
(a,b)\begin{pmatrix} 0 & -1 \\ 1 & c \end{pmatrix}  =
\begin{pmatrix} b \\ -a+bc \end{pmatrix}, 
\end{equation}
we see that multiplying row vectors in $D^2$ from the right with $\mu(c)$, where $\lVert c\rVert\ge 2$,
preserves the property that the norm of the first entry is less than or equal to the norm of the second entry.

Now
\[ (0,1)\mu(c_1) = 
(0,1) \begin{pmatrix} 0 & -1 \\ 1 & c_1 \end{pmatrix} = (1,c_1).
\]
From this and the assumption on the shape of $\prod_{r=1}^m \mu(c_r)$ we get
\begin{eqnarray*}
(1,c_1)\left(\prod_{j=2}^{m-1} \mu(c_j)\right)
= (0,1) \begin{pmatrix} \ast & \ast \\ d & e \end{pmatrix}\mu(c_m)^{-1}
=  (d,e)\begin{pmatrix} c_m & 1 \\ -1 & 0 \end{pmatrix} = (dc_m-e,d).
\end{eqnarray*}

Note that by assumption we have $\lVert c_1\rVert \ge 1$ and $\lVert dc_m-e\rVert > \lVert d\rVert$.
Thus $\lVert c_2\rVert,\ldots,\lVert c_{m-1}\rVert$ all greater or equal to $2$ would contradict
the property stated after Equation (\ref{eq:eta}).
\end{proof}

In a special case of Lemma \ref{lem:cde} we can even get a stronger conclusion, which will be
a useful tool for applications on non-commutative friezes. This is a non-commutative version
of \cite[Corollary 3.3]{CH19}.

\begin{Corollary} \label{cor:twonormless2}
Let $D$ be a normed division ring and let $(c_1,\ldots,c_m)\in D^m$ such that
$\prod_{r=1}^m \mu(c_r)$ is a non-zero scalar multiple of the identity matrix. Then there are two different
indices $j,k\in \{1,\ldots,m\}$ with $\lVert c_j\rVert <2$ and 
$\lVert c_k\rVert<2$. 
\end{Corollary}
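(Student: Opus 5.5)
We argue by contradiction, reducing to Lemma \ref{lem:cde}, as in the commutative case \cite[Corollary 3.3]{CH19}. Write $\prod_{r=1}^m \mu(c_r)=\lambda I$ with $\lambda\in D$, $\lambda\neq 0$. The first step is to note that the hypothesis is invariant under cyclic rotation of $(c_1,\ldots,c_m)$. Indeed, if $A=\mu(c_1)$ and $B=\prod_{r=2}^m\mu(c_r)$, then $AB=\lambda I$. Hence $B=A^{-1}\lambda$, since $\mu(c)$ is invertible with inverse $\begin{pmatrix} c & 1\\ -1 & 0\end{pmatrix}$. Therefore
$$BA=A^{-1}\lambda A.$$
The matrix $\lambda I$ is a scalar matrix with entries in the possibly non-central ring $D$, so we cannot simply claim $BA=\lambda I$. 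We must compute $A^{-1}(\lambda I)A$ directly:
$$\begin{pmatrix} c_1 & 1\\ -1 & 0\end{pmatrix}\begin{pmatrix} \lambda & 0\\ 0&\lambda\end{pmatrix}\begin{pmatrix} 0&-1\\ 1& c_1\end{pmatrix} = \begin{pmatrix} \lambda & -c_1\lambda+\lambda c_1\\ 0 & \lambda\end{pmatrix}.$$
This is not scalar unless $\lambda$ commutes with $c_1$. However, the bottom row is $(0,\lambda)$, which is exactly the information Lemma \ref{lem:cde} uses. So for any rotation of the sequence, the product has the form $\begin{pmatrix}\ast&\ast\\ d&e\end{pmatrix}$ with $d=0$ and $e=\lambda$, after conjugating step by step; at each step the bottom-left entry stays $0$ and the bottom-right entry stays $\lambda$. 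I expect this bookkeeping to be the main obstacle: rotation must be done one factor at a time, and one must check by the same computation that the form $\begin{pmatrix}\lambda & x\\ 0&\lambda\end{pmatrix}$ is preserved under conjugation by $\mu(c)$. It is, because the bottom row of $\mu(c)^{-1}\begin{pmatrix}\lambda&x\\0&\lambda\end{pmatrix}\mu(c)$ is $(0,\lambda)$.

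With $d=0$ and $e=\lambda\neq0$, we have $\lVert dc_m-e\rVert=\lVert\lambda\rVert>0=\lVert d\rVert$. So the second hypothesis of Lemma \ref{lem:cde} holds for every rotation. We may also assume $m\ge 3$. If $m\le 2$, check directly: $\mu(c_1)$ is never scalar, and $\mu(c_1)\mu(c_2)=\begin{pmatrix}-1&-c_2\\ c_1& c_1c_2-1\end{pmatrix}$ being scalar forces $c_1=c_2=0$, so both norms are $<2$.

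Now suppose there are fewer than two indices with $\lVert c_j\rVert<2$. First, if all $\lVert c_j\rVert\ge 2$, then applying Lemma \ref{lem:cde} to the original sequence gives some $j$ with $\lVert c_j\rVert<2$, a contradiction. So exactly one index $k$ has $\lVert c_k\rVert<2$. Rotate cyclically so that $c_k$ becomes the \emph{last} entry $c_m'$, and the new first entry $c_1'$ has norm $\ge 2\ge1$. Lemma \ref{lem:cde} then yields an index $j\in\{2,\ldots,m-1\}$ with $\lVert c_j'\rVert<2$. This index is different from the position $m$ of $c_k$, contradicting uniqueness. Hence at least two distinct indices $j,k$ satisfy $\lVert c_j\rVert<2$ and $\lVert c_k\rVert<2$.
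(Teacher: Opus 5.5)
\textbf{There is a genuine gap, in the bookkeeping step you yourself flagged as the main obstacle.} Your computation for the first rotation is correct. The claimed invariance under further rotations is false. For $M=\begin{pmatrix}\lambda&x\\0&\lambda\end{pmatrix}$, the bottom row of $\mu(c)^{-1}M\mu(c)$ is $(-x,\lambda-xc)$, not $(0,\lambda)$. Here $x=\lambda c_1-c_1\lambda$, which is nonzero whenever $\lambda$ and $c_1$ do not commute.

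So the hypotheses of Lemma \ref{lem:cde} are only established for the original sequence and its first rotation, and they can really fail for later rotations. For example, in $\mathbb{H}$ take $(c_1,c_2,c_3,c_4)=(-\mathrm{j}-\mathrm{k},\mathrm{j},-\mathrm{j}+\mathrm{k},-\mathrm{k})$. Then $\prod_r\mu(c_r)=-\mathrm{i}\,I$. But $\mu(c_3)\mu(c_4)\mu(c_1)\mu(c_2)$ has bottom row $(d,e)=(2\mathrm{j}-2\mathrm{k},-2+\mathrm{i})$, so $dc_2-e=\mathrm{i}$ and $\lVert dc_2-e\rVert=1<2\sqrt2=\lVert d\rVert$. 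Hence your final step, rotating the unique small entry $c_k$ to the last position, is unjustified when $2\le k\le m-1$.

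If $\lambda$ is central, every rotation has product $\lambda I$ and your argument is correct. It is then essentially the paper's proof, which gets rotation invariance from ``scalar matrices commute with every matrix''. That tacitly assumes a central scalar, as holds for $-I$ in Corollary \ref{thm:quidditynorm2}. Your contradiction argument simply replaces the paper's case split on $\lVert c_1\rVert<1$.

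\textbf{How to repair it for arbitrary $\lambda\neq0$.} The statement is still true, but you should grow from both ends instead of rotating. Fix $k$ and suppose $\lVert c_r\rVert\ge2$ for all $r\neq k$. Put $A=\mu(c_1)\cdots\mu(c_{k-1})$ and $B=\mu(c_{k+1})\cdots\mu(c_m)$. Then $\mu(c_k)B=A^{-1}(\lambda I)$. Multiplying by $(1,0)$ from the left and using that the first row of $\mu(c_k)$ is $(0,-1)$ gives $-(0,1)B=\bigl((1,0)A^{-1}\bigr)\lambda$.

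\begin{itemize}
\item The inequality in the proof of Lemma \ref{lem:cde} gives $(0,1)B=(x,y)$ with $\lVert x\rVert\le\lVert y\rVert$.
\item Its mirror image gives $(1,0)A^{-1}=(p,q)$ with $\lVert q\rVert<\lVert p\rVert$. Indeed, right multiplication by $\mu(c)^{-1}=\begin{pmatrix}c&1\\-1&0\end{pmatrix}$ with $\lVert c\rVert\ge2$ sends $(p,q)$ with $\lVert q\rVert<\lVert p\rVert$ to $(pc-q,p)$, which satisfies the same strict inequality; start from $(1,0)$.
\end{itemize}

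Then $x=-p\lambda$ and $y=-q\lambda$, and multiplicativity of the norm gives $\lVert x\rVert>\lVert y\rVert$, a contradiction. So for every $k$ some $c_r$ with $r\neq k$ has norm $<2$, which yields two such indices. This handles all positions at once and never uses centrality of $\lambda$.
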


\begin{proof}
Note that the assumption of the corollary can not be satisfied for $m=1$ and for $m=2$ it only holds
for $c_1=0=c_2$, in which case the conclusion is clearly true. So we can assume now that $m\ge 3$. 

Since any scalar multiple of the identity matrix commutes with every matrix, the assumption on the product of 
$\mu$-matrices also holds for all rotated shifts of the sequence $(c_1,\ldots,c_m)$, in particular it holds
for $(c_2,\ldots,c_{m},c_1)$. 

We distinguish cases. Suppose first that $\lVert c_1\rVert <1$. If also $\lVert c_{2}\rVert<1$, then we are done. 
If $\lVert c_{2}\rVert\ge 1$, then we consider the rotated sequence $(c_{2},\ldots,c_{m},c_1)$. 
It satisfies the assumptions of Lemma \ref{lem:cde} with $d=0$ ($e\neq 0$ by the assumption that 
the product of $\mu$-matrices is a non-zero multiple of the identity matrix). By Lemma \ref{lem:cde}
there exists an index $j\in \{3,\ldots,m\}$ with $\lVert c_j\rVert<2$, and we are done. 

Suppose now that $\lVert c_1\rVert \ge 1$. Again by Lemma \ref{lem:cde} there exists an index 
$j\in \{2,\ldots,m-1\}$ such that $\lVert c_j\rVert <2$. Then we consider the rotated sequence
$$(c_{j+1},c_{j+2},\ldots,c_m,c_1,\ldots,c_{j-1},c_{j}).
$$
If $\lVert c_{j+1}\rVert <1$, then we are done. If $\lVert c_{j+1}\rVert \ge 1$, then the rotated sequence
satisfies the assumptions of Lemma \ref{lem:cde} (with $d=0$). So there exists an index
$k\in \{j+2,\ldots,m,1,\ldots,j-1\}$ such that $\lVert c_k\rVert <2$, and we are done. 
This completes the proof of the corollary.
\end{proof}

We now want to apply the previous results to non-commutative friezes. 

\begin{Corollary} \label{thm:quidditynorm2}
Let $D$ be a normed division ring and $R\subseteq D \setminus\{0\}$ a subset. 
Let $\mathcal{C}=(c_{i,j})$ be a non-commutative frieze on an $m$-gon (where $m\ge 3$) over 
$R$ such that all boundary values are equal to 1.
Then the set
$$\{c_{0,2},c_{1,3},\ldots,c_{m-3,m-1},c_{m-2,0},c_{m-1,1}\}
$$
of quiddity entries contains at least two elements with norm less than 2. 
\end{Corollary}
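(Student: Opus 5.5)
The plan is to combine Lemma \ref{lem:etamatrices} with Corollary \ref{cor:twonormless2}. Since all boundary values of $\mathcal{C}$ equal $1$, Lemma \ref{lem:etamatrices} gives
$$\prod_{r=1}^{m} \mu(c_{r-1,r+1}) = \begin{pmatrix} -1 & 0 \\ 0 & -1 \end{pmatrix},$$
with indices read modulo $m$. The right-hand side is $-1$ times the identity matrix. This is a non-zero scalar multiple, because $-1\neq 0$ in the division ring $D$.

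Next I set $c_r := c_{r-1,r+1}$ for $r=1,\ldots,m$. Explicitly, $c_1=c_{0,2}$, $c_2=c_{1,3}$, and so on, up to $c_{m-1}=c_{m-2,0}$ and $c_m=c_{m-1,1}$. This is exactly the list of quiddity entries in the statement, in the same cyclic order. Corollary \ref{cor:twonormless2} then applies to $(c_1,\ldots,c_m)\in D^m$. It yields two different indices $j\neq k$ in $\{1,\ldots,m\}$ with $\lVert c_j\rVert<2$ and $\lVert c_k\rVert<2$.

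The only point needing care is the wording ``at least two elements'' of the \emph{set}. If $c_j$ and $c_k$ happen to be equal as elements of $D$, the set might contain just one element of norm less than $2$, even though two positions do. I would handle this by reading the statement as counting the entries $c_{0,2},\ldots,c_{m-1,1}$ with multiplicity, i.e.\ as a list indexed by the $m$ vertices. This is evidently the intended meaning, since for instance in the height-one example with $a=b$ and $ab=2$ the positions carry equal values. With that reading, the two distinct indices from Corollary \ref{cor:twonormless2} give the claim directly.

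Beyond the bookkeeping that matches the cyclic indexing of Lemma \ref{lem:etamatrices} with the listed set, I expect no real obstacle. The case $m=3$ is also covered, since Corollary \ref{cor:twonormless2} is stated for every $m$.
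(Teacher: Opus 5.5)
Your argument is correct and is exactly the paper's proof. Lemma~\ref{lem:etamatrices} gives the negative identity matrix, and Corollary~\ref{cor:twonormless2} applied to $(c_{0,2},c_{1,3},\ldots,c_{m-1,1})$ yields two distinct positions of norm less than $2$. Your reading of ``two elements'' as two entries counted by position is also what the paper's proof uses (``two quiddity entries $c_{j-1,j+1}$ and $c_{k-1,k+1}$''), and it is in fact necessary: for $m=3$ all listed entries equal $1$, so a literal set reading would fail.
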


Recall from Remark \ref{rem:equalquddity} that for the quiddity entries we have $c_{i,i+2}=c_{i+2,i}$
since the boundary entries are all equal to 1, so one could state Corollary \ref{thm:quidditynorm2}
as well for the set 
$$\{c_{2,0},c_{3,1},\ldots,c_{m-1,m-3},c_{0,m-2},c_{1,m-1}\}.$$

\begin{proof}
We know from Lemma \ref{lem:etamatrices} that
$$\prod_{r=1}^{m} \mu(c_{r-1,r+1}) = \begin{pmatrix} -1 & 0 \\ 0 & -1 \end{pmatrix}
$$ 
is a non-zero scalar multiplie of the identity matrix. Then Corollary \ref{cor:twonormless2}
gives that there are two quiddity entries $c_{j-1,j+1}$ and $c_{k-1,k+1}$ with norm less than 2. 
\end{proof}

\section{Non-commutative friezes over Lipschitz subrings of quaternion algebras}

We have seen in the previous section that for norm-finite subsets of normed division rings there 
are only finitely many non-commutative friezes for each height.
In this section we want to apply this to certain subsets of quaternion algebras. 
For using the norm form on a quaternion algebra to define a norm function (in the sense of
Definition \ref{def:normeddivring}) we require the norm form to have values in $\mathbb{R}$
(and not in an arbitrary field). 
Therefore, we consider in this section quaternion algebras over the rational numbers and the real numbers. 
\medskip

Let $a,b\in \mathbb{R}\setminus \{0\}$. The quaternion algebra $(a,b)_{\mathbb{R}}$ 
(and similarly $(a,b)_{\mathbb{Q}}$ if $a,b\in \mathbb{Q}$) carries 
a norm form
$$N:(a,b)_{\mathbb{R}}\to \mathbb{R},~N(\alpha+\beta \mathrm{i}+\gamma \mathrm{j} +
\delta \mathrm{k}) = \alpha^2 - a\beta^2 - b\gamma^2 +ab \delta^2
$$
(see Definition \ref{def:norm}).
To mimic the norm on Hamilton's quaternions the natural approach for a norm function is to set
$$\lVert .\rVert: (a,b)_{\mathbb{R}}\to \mathbb{R},~\lVert q\rVert := \sqrt{ N(q)}
$$
where $q=\alpha+\beta \mathrm{i}+\gamma \mathrm{j} +\delta \mathrm{k}\in (a,b)_{\mathbb{R}}$. 
However, for this to make sense we need the values $N(q)$ to be 
non-negative real numbers. 
This is only guaranteed when we assume $a<0$ and $b<0$. 
\medskip

\begin{Proposition} \label{prop:normab<0}
Let $a<0$ and $b<0$ be real numbers. 
With the above notation, the quaternion algebra $(a,b)_{\mathbb{R}}$ (and also $(a,b)_{\mathbb{Q}}$ for
$a,b\in \mathbb{Q}$) becomes a normed division ring
with the norm function $\lVert .\rVert =\sqrt{N(.)}$. That is (cf. Definition \ref{def:normeddivring}), the following properties are satisfied
for all $q,q_1,q_2\in (a,b)_{\mathbb{R}}$:
\begin{enumerate}
\item[{(i)}] $\lVert q\rVert =0$ $\Longleftrightarrow$ $q=0$.
\item[{(ii)}] $\lVert q_1q_2\rVert  = \lVert q_1\rVert \cdot \lVert q_2\rVert$.
\item[{(iii)}] $\lVert q_1+q_2\rVert \le \lVert q_1\rVert  + \lVert q_2\rVert$.
\end{enumerate}
\end{Proposition}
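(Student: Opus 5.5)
The plan is to reduce all three properties to Proposition \ref{prop:conjnormprop} together with the observation that, for $a<0$ and $b<0$, the norm form is a positive definite quadratic form. Writing $q=\alpha+\beta\mathrm{i}+\gamma\mathrm{j}+\delta\mathrm{k}$, we have
$$N(q)=\alpha^2+|a|\beta^2+|b|\gamma^2+|ab|\delta^2,$$
a sum of non-negative terms with positive coefficients. Hence $N(q)\ge 0$, so $\lVert q\rVert=\sqrt{N(q)}$ is well defined, and $N(q)=0$ forces $\alpha=\beta=\gamma=\delta=0$. This gives (i). Combined with Proposition \ref{prop:invertible}, every non-zero element is invertible, so $(a,b)_{\mathbb{R}}$ is a division ring. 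The same argument applies verbatim to $(a,b)_{\mathbb{Q}}$, since it is a subring closed under inverses, the inverse being $N(q)^{-1}\overline{q}$.

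Property (ii) follows immediately from Proposition \ref{prop:conjnormprop}\,(iii): we have $N(q_1q_2)=N(q_1)N(q_2)$, and both factors are non-negative, so we may take square roots.

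For (iii), the idea is to identify $(a,b)_{\mathbb{R}}$ with $\mathbb{R}^4$ through the linear map
$$\phi(q)=\bigl(\alpha,\sqrt{-a}\,\beta,\sqrt{-b}\,\gamma,\sqrt{ab}\,\delta\bigr).$$
This map is additive and satisfies $\lVert q\rVert=\lvert\phi(q)\rvert_{\mathrm{Euclid}}$. The triangle inequality for $\lVert\cdot\rVert$ is then inherited from the Euclidean triangle inequality, since
$$\lVert q_1+q_2\rVert=\lvert\phi(q_1)+\phi(q_2)\rvert\le\lvert\phi(q_1)\rvert+\lvert\phi(q_2)\rvert.$$
As an alternative, one could argue algebraically: expand $N(q_1+q_2)=N(q_1)+N(q_2)+(q_1\overline{q_2}+q_2\overline{q_1})$, note that $q_1\overline{q_2}+q_2\overline{q_1}=2\,\mathrm{Re}(q_1\overline{q_2})$, and bound this by $2\sqrt{N(q_1\overline{q_2})}=2\lVert q_1\rVert\lVert q_2\rVert$. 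That bound holds because $\mathrm{Re}(x)^2\le N(x)$ when $a,b<0$.

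The only point that needs some care is (iii). Recognising the norm as a rescaled Euclidean norm makes it routine, so I expect no real obstacle anywhere.
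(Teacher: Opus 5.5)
Your proposal is correct and follows the paper's proof. (i) comes from $N$ being a sum of positive multiples of squares, and (ii) comes from the multiplicativity in Proposition~\ref{prop:conjnormprop}\,(iii). (iii) comes from $\sqrt{N}$ being induced by an inner product, and your rescaling isometry $\phi$ onto Euclidean $\mathbb{R}^4$ is an explicit realisation of that fact. Beyond the paper, you spell out via Proposition~\ref{prop:invertible} the division-ring part it leaves implicit, and your alternative algebraic argument using $\mathrm{Re}(x)^2\le N(x)$ is also valid.
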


\begin{proof}
\begin{enumerate}
\item[{(i)}] Since $a<0$ and $b<0$ the value $N(q)$ is a sum of positive multiples of squares, hence
we have $\lVert q\rVert =0$ if and only if $N(q)=0$ if and only if $q=0$. 
\item[{(ii)}] This follows from Proposition \ref{prop:conjnormprop}\,(iii). 
\item[{(iii)}] Since $a<0$ and $b<0$, the function $\lVert .\rVert =\sqrt{N(.)}$ is induced from an inner product
so the triangle inequality is satisfied for $\lVert .\rVert$ (this is a standard application of the Cauchy-Schwarz inequality). 
\end{enumerate}
\end{proof}

\begin{Remark} \label{rem:squares}
In the sequel we will restrict our attention to quaternion algebras over $\mathbb{Q}$. Given a
quaternion algebra $(a,b)_{\mathbb{Q}}$ with $a,b\in \mathbb{Q}$ we can assume, up to isomorphism, 
that $a,b\in \mathbb{Z}$. 
In fact, this follows from the general observation that $(a,b)_F\cong (ac^2,bd^2)_F$ for all non-zero $c,d\in F$
and any field $F$ (see \cite{Conrad}, Remark after Definition 4.1). 
\end{Remark}

\smallskip

So from now on we consider quaternion algebras $(a,b)_{\mathbb{Q}}$ over the rational numbers where
$a$ and $b$ are non-zero integers. 
\medskip

One can immediately generalize the well-known Lipschitz quaternions (see Example \ref{ex:lipschitz})
from Hamilton's quaternions to other quaternion algebras.

\begin{Definition} \label{def:lipschitz}
Let $a,b\in \mathbb{Z}\setminus \{0\}$. The {\em Lipschitz subring} of the quaternion algebra $(a,b)_{\mathbb{Q}}$
is the subring
$$(a,b)_{\mathbb{Z}} = \{\alpha+ \beta \mathrm{i} + \gamma \mathrm{j} + 
\delta \mathrm{k}\,|\,\alpha,\beta,\gamma,\delta\in \mathbb{Z}\}.
$$
\end{Definition}

Note that for $a=b=-1$ the subring $(-1,-1)_{\mathbb{Z}}=\mathbb{H}_{\mathbb{Z}}$ is the 
set of Lipschitz quaternions. 

\medskip

Our aim is to consider non-commutative friezes over quaternion algebras (with all boundary values equal to 1). 
In particular, we want to address the
fundamental question whether for given subsets there are finitely or infinitely many such non-commutative friezes
for any given height. One important aspect for this is to know how many units the given subset contains. If a subset 
contains infinitely many units then there are already infinitely many non-commutative friezes of height 1,
by Example \ref{ex:height1}.
\smallskip

For Lipschitz subrings in quaternion algebras over $\mathbb{Q}$ the question of finitely or infinitely many units is answered by the following result.

\begin{Proposition}
Let $a,b\in \mathbb{Z}\setminus \{0\}$. 
For $a<0$ and $b<0$ the ring $(a,b)_{\mathbb{Z}}$ has finitely many units. On the other hand,  
if $a>0$ or $b>0$, the ring $(a,b)_{\mathbb{Z}}$ has infinitely many units.  
\end{Proposition}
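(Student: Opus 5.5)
The first step is to identify the units of $(a,b)_{\mathbb{Z}}$ with the elements of norm form $\pm 1$. If $q\in(a,b)_{\mathbb{Z}}$ is a unit with inverse $q'\in(a,b)_{\mathbb{Z}}$, then by Proposition \ref{prop:conjnormprop}\,(iii) we get $N(q)N(q')=N(1)=1$. Both factors are integers, so $N(q)=\pm1$. Conversely, if $N(q)=\pm1$, then the proof of Proposition \ref{prop:invertible} gives $q^{-1}=N(q)^{-1}\overline{q}=\pm\overline{q}$. This lies in $(a,b)_{\mathbb{Z}}$, so $q$ is a unit. Hence
$$(a,b)_{\mathbb{Z}}^{\ast}=\{q\in(a,b)_{\mathbb{Z}}\,:\,N(q)=\pm 1\}.$$

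\emph{Case $a<0$ and $b<0$.} Here $N(q)=\alpha^2+|a|\beta^2+|b|\gamma^2+|ab|\delta^2\ge 0$, so the units are exactly the integer solutions of
$$\alpha^2+|a|\beta^2+|b|\gamma^2+|ab|\delta^2=1.$$
Every coefficient is a positive integer, so each coordinate has absolute value at most $1$. That leaves at most $3^4$ candidates, and the set of units is finite. The same argument shows directly that $(a,b)_{\mathbb{Z}}$ is norm-finite in $(a,b)_{\mathbb{Q}}$.

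\emph{Case $a>0$ or $b>0$.} Suppose $a>0$; the case $b>0$ is symmetric, using $\mathrm{j}$ instead of $\mathrm{i}$. Restrict to the commutative subring $\mathbb{Z}[\mathrm{i}]=\{\alpha+\beta\mathrm{i}\}\subseteq(a,b)_{\mathbb{Z}}$, where $\mathrm{i}^2=a$ and $N(\alpha+\beta\mathrm{i})=\alpha^2-a\beta^2$.

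\emph{Subcase: $a$ not a perfect square.} The Pell equation $\alpha^2-a\beta^2=1$ has a solution with $\beta\neq0$. The element $u=\alpha+\beta\mathrm{i}$ is then a unit, with inverse $\alpha-\beta\mathrm{i}$. Its powers $u^k$ are pairwise distinct, because under the ring embedding $\mathbb{Z}[\mathrm{i}]\to\mathbb{R}$, $\mathrm{i}\mapsto\sqrt a$, the element $u$ maps to a real number of absolute value $\neq 1$.

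\emph{Subcase: $a=c^2$ a perfect square.} Pell is not available, and this is the step I expect to be the main obstacle. Here $(\mathrm{i}-c)(\mathrm{i}+c)=a-c^2=0$, so the element $x=\mathrm{i}-c\neq 0$ is a zero divisor. I would then try to find a nilpotent element. Since $x\mathrm{j}=-\mathrm{j}(\mathrm{i}+c)$, the element $y=(\mathrm{i}+c)\mathrm{j}$ satisfies
$$y^2=(\mathrm{i}+c)\mathrm{j}(\mathrm{i}+c)\mathrm{j}=(\mathrm{i}+c)(c-\mathrm{i})\mathrm{j}^2=(c^2-a)b=0.$$
Also $y\neq0$, because $y=c\mathrm{j}+\mathrm{k}$. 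Then $1+ty$ is a unit for every $t\in\mathbb{Z}$, with inverse $1-ty$, and these elements are pairwise distinct. This gives infinitely many units.

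A similar nilpotent construction, for instance $(\mathrm{j}+d)\mathrm{i}$ when $b=d^2$, handles the perfect-square case for $b>0$. This completes both directions.
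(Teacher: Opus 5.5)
Your proof is correct. In the case $a,b<0$ and in the non-square Pell subcase it is essentially the paper's argument. The paper runs Pell in $\mathbb{Z}[\mathrm{j}]$ assuming $b>0$ and quotes the infinitude of solutions directly; you run it in $\mathbb{Z}[\mathrm{i}]$ assuming $a>0$ and generate the units as powers of one solution.

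The genuine difference is the perfect-square subcase. The paper argues that $(a,b)_{\mathbb{Q}}\cong(a,1)_{\mathbb{Q}}\cong M(2,\mathbb{Q})$ and that the Lipschitz subring then becomes $M(2,\mathbb{Z})$, which has infinitely many units. This explains conceptually where the units come from (the algebra is split), but as written it glosses over the integral structure:
\begin{itemize}
\item the isomorphism $(a,d^2)_{\mathbb{Q}}\cong(a,1)_{\mathbb{Q}}$ sends $\mathrm{j}$ to $d$ times the new generator, so $(a,b)_{\mathbb{Z}}$ lands in a proper suborder of $(a,1)_{\mathbb{Z}}$ when $|d|>1$;
\item $(a,1)_{\mathbb{Z}}$ itself need not map onto $M(2,\mathbb{Z})$.
\end{itemize}
To finish that route one needs the extra fact that every order in $M(2,\mathbb{Q})$ has infinitely many units. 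Your explicit nilpotent $y=c\mathrm{j}+\mathrm{k}\in(a,b)_{\mathbb{Z}}$ with $y^2=(c^2-a)b=0$ avoids all of this. It produces the infinite family of units $1+ty$ directly inside the ring, so your argument is both more elementary and more complete.

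You are also more careful about the characterization of units. The paper cites multiplicativity via Proposition \ref{prop:normab<0}, which assumes $a,b<0$, and leaves the converse implicit. You instead use Proposition \ref{prop:conjnormprop}\,(iii), valid for all $a,b$, and observe that $q^{-1}=\pm\overline{q}$ lies in $(a,b)_{\mathbb{Z}}$.
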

 
\begin{proof}
By Proposition \ref{prop:normab<0} there is a norm 
form $N:(a,b)_{\mathbb{Z}}\to \mathbb{R}$ which takes values in the set of integers. 
Note that because the norm form $N$ is multiplicative (by Proposition \ref{prop:normab<0}\,(ii)), 
an element $q\in (a,b)_{\mathbb{Z}}$ is invertible if
and only if $N(q)=\pm 1$. 
\smallskip

First we consider the case $a<0$ and $b<0$. Then the values of the norm form $N$ are
non-negative integers. 
Hence for every unit $q\in (a,b)_{\mathbb{Z}}$ we have $N(q)=1$. The values of the norm form are
sums of non-negative multiples of squares of integers, so there can only be finitely many $q\in (a,b)_{\mathbb{Z}}$ with
$N(q)=1$. (Actually, for $a=b=-1$ there are eight units $\pm 1,\pm\mathrm{i},\pm\mathrm{j},
\pm \mathrm{k}$, for $a=-1$, $b<-1$ there are four units $\pm 1, \pm\mathrm{i}$, 
similarly for $a<-1$, $b=-1$ there are four units $\pm 1, \pm\mathrm{j}$, and if $a<-1$, $b<-1$ there
are only two units $\pm 1$.) 
\smallskip

Now suppose that $a>0$ or $b>0$, say $b>0$ (the other case is completely analogous).
If $b$ is a perfect square, then by Remark \ref{rem:squares} we have 
$(a,b)_{\mathbb{Q}} \cong (a,1)$. The latter is isomorphic to the matrix ring $M(2,\mathbb{Q})$,
see Remark \ref{rem:a1} and Theorem \ref{thm:notdivision}. The Lipschitz subring of $M(2,\mathbb{Q})$
is the ring $M(2,\mathbb{Z})$ which has infinitely many units. 

Now assume that $b$ is not a perfect square. 
By the introductory remark, units in $(a,b)_{\mathbb{Z}}$
correspond to integral solutions of the equations
\begin{equation} \label{eq:pell}
\alpha^2-a\beta^2-b\gamma^2+ab\delta^2 =\pm 1.
\end{equation}
Since $b$ is not a square the theory of Pell's equation (and Dirichlet's unit theorem) gives infinitely many 
integral solutions to the equations $\alpha^2-b\gamma^2=\pm 1$. Setting $\beta=0$ and $\delta=0$
these also yield infinitely many integral solutions of (\ref{eq:pell}). Hence there are infinitely many 
units in $(a,b)_{\mathbb{Z}}$. 
\end{proof} 
 
 \medskip
 
 For the case $a<0$ and $b<0$ we will show that for each height there are only 
 finitely many non-commutative friezes over $(a,b)_{\mathbb{Z}}\setminus \{0\}$. For this
 we need the following observation.

\begin{Proposition} \label{prop:abZfinite}
If $a<0$ and $b<0$ are integers, the Lipschitz subring $(a,b)_{\mathbb{Z}}$ is a norm-finite subset of the
normed division ring $(a,b)_{\mathbb{Q}}$. 
\end{Proposition}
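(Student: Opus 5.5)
The plan is to bound each integer coordinate of an element of bounded norm directly. Note first that the statement concerns $(a,b)_{\mathbb{Z}}\setminus\{0\}$ when viewed as a subset of $D\setminus\{0\}$ in Definition \ref{def:normedsubset}. Removing $0$ does not affect finiteness, so it suffices to show that for every real $N_0$ only finitely many $q\in(a,b)_{\mathbb{Z}}$ satisfy $\lVert q\rVert\le N_0$.

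Write $q=\alpha+\beta\mathrm{i}+\gamma\mathrm{j}+\delta\mathrm{k}$ with $\alpha,\beta,\gamma,\delta\in\mathbb{Z}$. By Proposition \ref{prop:normab<0} the norm is $\lVert q\rVert=\sqrt{N(q)}$ with
$$N(q)=\alpha^2+|a|\beta^2+|b|\gamma^2+|ab|\delta^2 .$$
Here every coefficient is a positive integer, because $a,b<0$ are non-zero integers. Hence $|a|,|b|,|ab|\ge 1$, and so
$$N(q)\ge \alpha^2+\beta^2+\gamma^2+\delta^2 .$$
Consequently $\lVert q\rVert\le N_0$ forces $\max(|\alpha|,|\beta|,|\gamma|,|\delta|)\le N_0$. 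The number of such integer quadruples is at most $(2\lfloor N_0\rfloor+1)^4$, which is finite.

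There is no real obstacle here. The only point to state carefully is the use of integrality of $a$ and $b$: this gives $|a|,|b|\ge1$, so the norm dominates the Euclidean norm on the coefficient vector. For non-integer negative rationals one would instead use $\min(1,|a|,|b|,|ab|)>0$, and the same argument would go through.
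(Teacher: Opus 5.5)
Your proof is correct and takes essentially the paper's approach. The paper notes that $N(q)$ is a non-negative integer and that each equation $\alpha^2-a\beta^2-b\gamma^2+ab\delta^2=r$ has only finitely many integral solutions; your explicit bound $N(q)\ge\alpha^2+\beta^2+\gamma^2+\delta^2$ justifies exactly that finiteness, and your handling of $0$ to match Definition~\ref{def:normedsubset} is a small point the paper leaves implicit.
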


\begin{proof}
Recall that the norm on $(a,b)_{\mathbb{Q}}$ is given by
$$\lVert \alpha + \beta\mathrm{i} + \gamma \mathrm{j} + \delta\mathrm{k}\rVert =
\sqrt{\alpha^2-a\beta^2-b\gamma^2+ab\delta^2}
$$
and when restricted to the Lipschitz subring $(a,b)_{\mathbb{Z}}$ the values of the norm function 
are roots of non-negative integers (since the integers $a<0$ and $b<0$ by assumption). 
In particular, the set $\{\lVert q\rVert \,:\,q\in (a,b)_{\mathbb{Z}}\}$ is a discrete subset of $\mathbb{R}_{\ge 0}$,
so $(a,b)_{\mathbb{Z}}$ is norm-discrete (see Definition \ref{def:normedsubset}). Moreover, for each
$r\in \mathbb{Z}_{\ge 0}$ there are only finitely many integral solutions to the equation 
$\alpha^2-a\beta^2-b\gamma^2+ab\delta^2=r$ (again use that $a<0$ and $b<0$ are integers).
Hence, for each $N\in \mathbb{R}$ the set $\{q\in (a,b)_{\mathbb{Z}}\,:\,\lVert q\rVert\le N\}$ is finite.
By Definition \ref{def:normedsubset} this means that the Lipschitz subring $(a,b)_{\mathbb{Z}}$ is norm-finite. 
\end{proof}

\begin{Corollary}
Let $a<0$ and $b<0$ be integers. For each height there are only finitely many 
non-commutative friezes over $(a,b)_{\mathbb{Z}}\setminus \{0\}$. 
\end{Corollary}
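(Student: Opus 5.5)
The plan is to combine Proposition \ref{prop:abZfinite} with Corollary \ref{cor:finitelymany}. Friezes in the sense of the paper's main question have all boundary entries equal to 1, and in that setting this is essentially immediate. Since the statement does not mention the boundary, I would also add a brief argument via Theorem \ref{lem:finite} for arbitrary boundary values.

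First, by Proposition \ref{prop:normab<0}, the algebra $(a,b)_{\mathbb{Q}}$ with integers $a<0$ and $b<0$ is a normed division ring under $\lVert q\rVert=\sqrt{N(q)}$. By Proposition \ref{prop:abZfinite}, $(a,b)_{\mathbb{Z}}$ is norm-finite. As the paper notes after Definition \ref{def:normedsubset}, every subset of a norm-finite set is norm-finite, so $R:=(a,b)_{\mathbb{Z}}\setminus\{0\}$ is a norm-finite subset of $D\setminus\{0\}$ with $D=(a,b)_{\mathbb{Q}}$. Corollary \ref{cor:finitelymany} then gives, for each height $n$, only finitely many non-commutative friezes over $R$ with all boundary entries $1$.

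If the boundary is allowed to vary, there is one extra point. Every frieze entry is invertible in $(a,b)_{\mathbb{Z}}$ only if we require values in the unit group, and by the previous proposition there are finitely many units for $a,b<0$. In particular, if boundary entries must be units of the ring, they come from a finite set. I would therefore fix one of the finitely many boundary assignments. Since $N$ takes values in the positive integers on $R$, we have $M\ge 1$ and $P=1$. Theorem \ref{lem:finite} then bounds all quiddity norms by $1+n$, so norm-finiteness leaves finitely many choices for the quiddities. Finally, the quiddities together with the boundary determine the frieze, by repeatedly applying the local exchange relations, as used in the proof of Corollary \ref{cor:finitelymany}.

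The main obstacle is interpretive rather than technical: deciding what the boundary convention is. Under the paper's standing convention of ones on the boundary, the proof is the two-line combination above. If arbitrary invertible boundary values are meant with frieze entries only required to lie in $D^\ast$, the boundary set need not be finite. In that case I would restrict to the boundary-$1$ reading, which matches the surrounding text.
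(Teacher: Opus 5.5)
Your main argument is exactly the paper's proof: Proposition \ref{prop:abZfinite} makes $(a,b)_{\mathbb{Z}}\setminus\{0\}$ a norm-finite subset of $(a,b)_{\mathbb{Q}}$, and Corollary \ref{cor:finitelymany}, which assumes all boundary entries are $1$ (the intended reading), finishes it. Your side remark for unit boundary values via Theorem \ref{lem:finite} with $M=P=1$ is also correct, and you are right to set aside the arbitrary-boundary reading: under it the statement is false, since a quadrangle with constant boundary $k\in\mathbb{Z}\setminus\{0\}$ and diagonals $2k,k$ gives infinitely many height-one friezes.
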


\begin{proof}
We know from Proposition \ref{prop:abZfinite} that $(a,b)_{\mathbb{Z}}$ (and hence also 
$(a,b)_{\mathbb{Z}}\setminus \{0\}$) is a norm-finite subset of the normed division ring $(a,b)_{\mathbb{Q}}$.  
Then the claim follows from Corollary \ref{cor:finitelymany}.
\end{proof}

We extend a definition from \cite{CHP24} to the non-commutative setting. 

\begin{Definition} \label{def:friezesubring}
Let $R$ be a ring. The {\em non-commutative frieze subring} of $R$ is defined as the ring $R^{\circ}$ 
generated by all entries of all non-commutative frieze patterns over $R$ with 1's on the boundary. 
\end{Definition}

\begin{Theorem} \label{thm:abcirc}
Let $a<0$ and $b<0$ be integers. For the non-commutative frieze subring of the Lipschitz
quaternion ring we get 
$$(a,b)_{\mathbb{Z}}^{\circ} = \left\{
\begin{array}{ll} \mathbb{Z} & \mbox{ if $a\le -4$ and $b\le -4$} \\
(a,b)_{\mathbb{Z}} & \mbox{ if $a\ge -3$ and $b\ge -3$}\\
\end{array} \right.
$$
\end{Theorem}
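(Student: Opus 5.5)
The plan is to treat the two cases separately. The case $a\ge -3$, $b\ge -3$ is a direct construction, while the case $a\le -4$, $b\le -4$ is an induction that uses the small-norm quiddity entries from Corollary \ref{cor:twonormless2}.

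\emph{Case $a\ge -3$ and $b\ge -3$.} The inclusion $(a,b)_{\mathbb{Z}}^{\circ}\subseteq (a,b)_{\mathbb{Z}}$ holds by definition, so only the reverse inclusion needs proof.
\begin{enumerate}
\item[{(1)}] Every frieze with 1's on the boundary contains $1$, and hence $\mathbb{Z}\subseteq (a,b)_{\mathbb{Z}}^{\circ}$.
\item[{(2)}] By Example \ref{ex:ncfriezes-1-2-3}, for each $a\in\{-1,-2,-3\}$ there is a non-commutative frieze over $(a,b)_{\mathbb{Z}}$ containing $\mathrm{i}$ or $-\mathrm{i}$. This gives $\mathrm{i}\in (a,b)_{\mathbb{Z}}^{\circ}$.
\item[{(3)}] The same example, with $\mathrm{j}$ in place of $\mathrm{i}$ for $b\in\{-1,-2,-3\}$, gives $\mathrm{j}\in (a,b)_{\mathbb{Z}}^{\circ}$.
\item[{(4)}] Then $\mathrm{k}=\mathrm{i}\mathrm{j}$ also lies in the subring, so all of $(a,b)_{\mathbb{Z}}$ is generated.
\end{enumerate}

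\emph{Case $a\le -4$ and $b\le -4$.} The key observation concerns $q=\alpha+\beta\mathrm{i}+\gamma\mathrm{j}+\delta\mathrm{k}\in(a,b)_{\mathbb{Z}}$. If $(\beta,\gamma,\delta)\neq 0$, then
$$N(q)=\alpha^2-a\beta^2-b\gamma^2+ab\delta^2\ge 4,$$
so $\lVert q\rVert\ge 2$. Hence the only elements of norm less than $2$ are $0$ and $\pm1$.

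I will prove by induction on $m$ the following claim: if $(c_1,\ldots,c_m)\in (a,b)_{\mathbb{Z}}^m$ and $\prod_{r=1}^m\mu(c_r)=\pm\mathrm{Id}$, then all $c_r\in\mathbb{Z}$.
\begin{enumerate}
\item[{(i)}] For $m\le 2$ the claim is checked directly.
\item[{(ii)}] For $m\ge 3$, Corollary \ref{cor:twonormless2} provides an index $j$ with $\lVert c_j\rVert<2$, so $c_j\in\{0,\pm1\}$.
\item[{(iii)}] By cyclic rotation (scalar matrices are central) I may assume $1<j<m$.
\item[{(iv)}] I then use the matrix identities
$$\mu(c)\mu(0)\mu(d)=-\mu(c+d),\qquad \mu(c)\mu(\epsilon)\mu(d)=\epsilon\,\mu(c-\epsilon)\mu(d-\epsilon)\quad(\epsilon=\pm1).$$
Their verification involves only sums and products with central scalars, so it is valid even though $c$ and $d$ need not commute.
\item[{(v)}] These identities replace the sequence by a shorter sequence in $(a,b)_{\mathbb{Z}}$ whose $\mu$-product is still $\pm\mathrm{Id}$.
\item[{(vi)}] By induction the shorter sequence is integral. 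Recovering $c_{j-1},c_{j+1}$ (and $c_j\in\mathbb{Z}$) shows the original sequence is integral too.
\end{enumerate}

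Applying the claim to the quiddity sequence of a frieze (Lemma \ref{lem:etamatrices}) shows that every quiddity entry lies in $\mathbb{Z}$. Next I argue that a non-commutative frieze with 1's on the boundary is uniquely determined by its quiddity sequence, as in the proof of Corollary \ref{cor:finitelymany}. The classical frieze over $\mathbb{Q}$ with this integer quiddity has integer entries, namely continuants (determinants of tridiagonal matrices) in the quiddity entries. It is a non-commutative frieze, since triangle relations are trivial and exchange relations become Ptolemy relations in the commutative case. By uniqueness, every entry therefore lies in $\mathbb{Z}$, so $(a,b)_{\mathbb{Z}}^{\circ}=\mathbb{Z}$.

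The main obstacle I expect is in this second case. It lies in the reduction step: checking the $\mu$-matrix identities carefully in the non-commutative setting, and handling the degenerate small lengths where the reduction ends at $m\le 2$ or produces a zero entry. I will also make sure that uniqueness of the frieze given its quiddity sequence is justified for non-commutative friezes. If that justification is problematic, I will instead derive integrality of all entries inductively, using the exchange relations with boundary entries equal to 1.
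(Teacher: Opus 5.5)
\textbf{Verdict: genuine gap in the case $a,b\le -4$.} Your case $a,b\ge -3$ matches the paper. The two identities in (iv) are correct. The last step (integral quiddity gives integral entries via propagation) is fine.

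\textbf{The gap.} The inductive claim you set up is false. By your own identity, $\mu(\mathrm{i})\mu(0)\mu(-\mathrm{i})=-\mu(0)$, so $(\mathrm{i},0,-\mathrm{i},0)$ has $\mu$-product $-\mu(0)^2=\mathrm{Id}$. Likewise $(\mathrm{i},0,-\mathrm{i},0,1,1,1)$ has product $-\mathrm{Id}$. Neither sequence is integral, whatever $a,b$ are. The proof breaks in step (vi) when $c_j=0$: the shorter sequence only gives $c_{j-1}+c_{j+1}\in\mathbb{Z}$, not $c_{j-1},c_{j+1}\in\mathbb{Z}$.

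\textbf{Why the obvious patches do not work.}
\begin{enumerate}
\item[(1)] Choosing $j$ cleverly does not help: in the first example both small-norm entries are $0$.
\item[(2)] Restricting to zero-free sequences with product $-\mathrm{Id}$ does not help either, because the $\epsilon$-reductions create zeros whenever $c_{j\pm1}=\epsilon$. For instance, $(1+\mathrm{i},1,1,1-\mathrm{i},1,1,1,1,1)$ is zero-free with product $-\mathrm{Id}$. Two applications of your $\epsilon=1$ identity reduce it to the counterexample above.
\end{enumerate}

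\textbf{What is missing.} You need to carry the fact that the sequence comes from a frieze, i.e.\ that all entries (all proper continuants) are invertible, and show that each reduction preserves this. The paper does so by never leaving the class of friezes with $1$'s on the boundary.
\begin{enumerate}
\item[(1)] Invertibility forces the two small-norm quiddity entries from Corollary \ref{thm:quidditynorm2} to be $\pm1$, never $0$.
\item[(2)] If one of them equals $1$, restrict to the $(m-1)$-gon without that vertex. This is again a frieze by Theorem \ref{thm:friezepolygon}, and its new quiddity entries are $c_{i-1,i+1}-1$ and $c_{i+1,i+3}-1$.
\item[(3)] If both equal $-1$, they cannot be adjacent, since otherwise the length-$3$ diagonal would be $(-1)(-1)-1=0$. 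One removes both vertices and applies the sign twist $c'_{r,s}=\epsilon_r\epsilon_s c_{r,s}$. The two minus signs from $\mu(c)\mu(-1)\mu(d)=-\mu(c+1)\mu(d+1)$ cancel, and the result is a genuine frieze with $1$'s on the boundary on the $(m-2)$-gon.
\end{enumerate}
The entries of the smaller frieze are, up to sign, entries of the original one, so they are nonzero. Hence the induction never meets a $0$ and terminates at $(0,0)$ or $(1,1,1)$. Alternatively, you could keep your formal $\pm\mathrm{Id}$ framework but add the hypothesis that all proper cyclic continuants are nonzero, and verify that your reductions preserve it up to sign. As written, the argument does not go through.
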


\begin{proof}Let $\mathcal{C}=(c_{i,j})$ be a non-commutative frieze over $(a,b)_{\mathbb{Z}}$
with 1's on the boundary. 
By Corollary \ref{thm:quidditynorm2} we know that the quiddity sequence contains two
entries with norm less than 2. 

Recall that the norm on $(a,b)_{\mathbb{Z}}$ is defined by
$$\lVert \alpha+\beta\mathrm{i}+\gamma\mathrm{j}+\delta\mathrm{k}\rVert = 
\sqrt{\alpha^2-a\beta^2-b\gamma^2+ab\delta^2}
$$
and the values are roots of non-negative integers since $a,b\in \mathbb{Z}_{< 0}$ by assumption.
Hence
the norm is $<2$ precisely if the integer $\alpha^2-a\beta^2-b\gamma^2+ab\delta^2$ is less than 4. 
\smallskip

We first consider the case $a\le -4$ and $b\le -4$.  Then
the only elements with norm $<2$ are $\pm 1$ and $0$.
A quiddity entry of $\mathcal{C}$ cannot be 0 since by Definition \ref{def:ncpolygon} all entries
in a non-commutative frieze are invertible. Hence $\mathcal{C}$ contains two quiddity entries 
equal to $\pm 1$. 

We know from \cite[Corollary 6.8]{CHJ24} that the quiddity sequence $(c_{0,2},c_{1,3},\ldots,
c_{m-1,m})$ of $\mathcal{C}$ satisfies the formula
\begin{equation} \label{eq:prodmu}
\prod_{r=1}^m \mu(c_{r-1,r+1},c_{r,r+1},c_{r,r-1},c_{r-1,r})= \begin{pmatrix} -1 & 0 \\ 0 & -1 
\end{pmatrix}
\end{equation}
where the non-commutative $\mu$-matrices are defined in \cite[Definition 6.5]{CHJ24} as
$$\mu(c,d,e,f)= \begin{pmatrix} 0 & -e^{-1}d \\ 1 & f^{-1}c \end{pmatrix}.
$$
A general reduction formula for non-commutative quiddity sequences has been shown 
in \cite[Proposition 6.12]{CHJ24}. We apply this formula in our case where all boundary entries
are equal to 1. 

When a quiddity entry $c_{i,i+2}$ is 1 then the formula from \cite[Proposition 6.12]{CHJ24}
takes the form
\begin{equation} \label{eq:prod1111}
\mu(c_{i-1,i+1},1,1,1)\mu(1,1,1,1)\mu(c_{i+1,i+3},1,1,1) =
\mu(-1+c_{i-1,i+1},1,1,1)\mu(-1+c_{i+1,i+3},1,1,1)
\end{equation} 
(see \cite[Remark 6.13]{CHJ24}). This means that we can reduce the product in (\ref{eq:prodmu})
to a product with fewer $\mu$-matrices, that is, we get a shorter non-commutative quiddity cycle
(in the sense of \cite[Definition 6.9]{CHJ24}, i.e. the product of the corresponding $\mu$-matrices
is the negative of the identity matrix). Crucially, the $\mu$-matrices on the right hand side of (\ref{eq:prod1111})
still have the property that the last three entries in the argument (the boundary values) are equal to 1. 
More precisely, the shorter non-commutative 
quiddity cycle is again the quiddity cycle of a 
non-commutative frieze with 1's on the boundary.
In fact, $\mathcal{C}$ is a non-commutative frieze on
an $m$-gon. The restriction of $\mathcal{C}$ to the
$(m-1)$-gon without the vertex $i+1$ is a
non-commutative frieze (by Theorem 
\ref{thm:friezepolygon}), its new boundary entries
are $c_{i,i+2}=1$ and by Proposition 
\ref{rem:equalquddity} also $c_{i+2,i}=1$, and its new
quiddity entries are $c_{i-1,i+2}= -1+c_{i-1,i+1}$
and $c_{i,i+3}=-1+c_{i+1,i+3}$ (these are 
non-commutative exchange relations).
\smallskip

If the quiddity sequence of $\mathcal{C}$ does not contain a 1, then it must contain two entries
equal to $-1$. Note that these $-1$-entries cannot be neighbouring as otherwise the non-commutative
exchange relations give a 0 on a length 3 diagonal which is not allowed since all entries in a non-commutative 
frieze are invertible. 

In the case of a $-1$ quiddity entry the formula from \cite[Proposition 6.12]{CHJ24} reads
\begin{equation} \label{eq:prod11111}
\mu(c_{i-1,i+1},1,1,1)\mu(-1,1,1,1)\mu(c_{i+1,i+3},1,1,1) =
\mu(-1-c_{i-1,i+1},-1,1,1)\mu(-1-c_{i+1,i+3},1,-1,-1).
\end{equation} 
Note that now the $\mu$-matrices on the right hand side no longer have the crucial property that
the last three entries of the argument are equal to 1. However, a direct computation shows that 
$$\mu(-1-c_{i-1,i+1},-1,1,1)\mu(-1-c_{i+1,i+3},1,-1,-1) = 
- \mu(1+c_{i-1,i+1},1,1,1)\mu(1+c_{i+1,i+3},1,1,1).
$$
Thus the above equation (\ref{eq:prod11111}) takes the form
\begin{equation} \label{eq:twistfrieze} \mu(c_{i-1,i+1},1,1,1)\mu(-1,1,1,1)\mu(c_{i+1,i+3},1,1,1)
= - \mu(1+c_{i-1,i+1},1,1,1)\mu(1+c_{i+1,i+3},1,1,1)
\end{equation}
where now the $\mu$-matrices on the right hand side again have the property 
that the last three entries of the argument are equal to 1. But a minus sign appears on the right 
hand side of the formula, hence the shorter sequence after removing the $-1$ quiddity entry
is not a non-commutative quiddity cycle anymore. 

Recall that we are dealing with the case that there are two $-1$ entries in the quiddity sequence
of $\mathcal{C}$ and that these are non-neighbouring. Hence we can apply the reduction
twice, the two minus signs cancel and indeed we get a shorter non-commutative quiddity cycle 
where in all $\mu$-matrices appearing in the product the last three entries in the argument are 1. 
More precisely, this reduction 
produces a non-commutative quiddity cycle which is
the quiddity cycle of a non-commutative frieze on an
$(m-2)$-gon (where $\mathcal{C}$ was a non-commutative
frieze on an $m$-gon). In fact, suppose that
$c_{i,i+2}=-1$ and $c_{k,k+2}=-1$ are two 
non-neighbouring quiddity entries. We remove the
vertices $i+1$ and $k+1$ from the $m$-gon and twist
the remaining entries of $\mathcal{C}$ as follows.
For vertices $r,s$ in the $(m-2)$-gon we set 
$\epsilon_r=-1$ for $r=i+2,i+3,\ldots,k$ and
$\epsilon_r=1$ for $r=k+2,k+3,\ldots,i$ (indices 
modulo $m$) and then
$c'_{r,s}:= \epsilon_r\epsilon_s c_{r,s}$. 
By observing that in each triangle relation and each
non-commutative exchange relation the signs cancel,
one can directly verify that 
$\mathcal{C'}=(c'_{r,s})$ is a non-commutative
frieze on the $(m-2)$-gon; we leave the details to
the reader. For the new boundary entries we have 
$$c'_{i,i+2}=\epsilon_i\epsilon_{i+2}c_{i,i+2}
= 1\cdot (-1)\cdot (-1)=1
$$
and similarly $c'_{k,k+2}=1$; all other boundary 
entries remain 1 by construction of $\mathcal{C'}$. 
For the new quiddity entries we get from the 
non-commutative exchange relations in $\mathcal{C}$ 
that
$$c'_{i-1,i+2} = \epsilon_{i-1}\epsilon_{i+2}
c_{i-1,i+2} = 1\cdot (-1)(-1-c_{i-1,i+1})
= 1+c_{i-1,i+1}
$$
and similarly $c'_{i,i+3}=1+c_{i+1,i+3}$,
$c'_{k-1,k+2}=1+c_{k-1,k+1}$ and 
$c'_{k,k+3}=1+c_{k+1,k+3}$ and these
are precisely the values appearing in the reduction
(\ref{eq:twistfrieze}). The only exceptional case is when 
$k=i+2$, then there is only one new quiddity entry 
$c'_{i,i+4} = c_{i,i+4}=2+c_{i+1,i+3}$, which is also what 
two applications of (\ref{eq:twistfrieze}) give.

\smallskip

We have shown that in both cases (a 1 quiddity entry or two $-1$ quiddity entries) we can reduce 
the non-commutative quiddity cycle coming from $\mathcal{C}$ to a shorter non-commutative
quiddity cycle which is again
the quiddity cycle of a non-commutative frieze
with 1's on the boundary.

Inductively, we can in this way reduce to a non-commutative quiddity cycle of length 2 or 3. 
In \cite[Example 6.11]{CHJ24} we have determined all non-commutative quiddity cycles 
of lengths 2 and 3. The only ones with the property that the last three arguments of the $\mu$-matrices are
1 are the well-known integral quiddity cycles $(0,0)$ and $(1,1,1)$. 
Since the above reductions for $\mu$-matrices only added/subtracted 1 to the quiddity entries $c_{i-1,i+1}$
and $c_{i+1,i+3}$ we can deduce that the original quiddity sequence of $\mathcal{C}$ consists of
integral entries. Then the non-commutative exchange relations (see Definition \ref{def:ncpolygon}) imply
that all entries of $\mathcal{C}$ are rational numbers. On the other hand, the entries of $\mathcal{C}$
are from $(a,b)_{\mathbb{Z}}$. Clearly, $\mathbb{Q}\cap (a,b)_{\mathbb{Z}}= \mathbb{Z}$,
hence $\mathcal{C}$ has only integral entries. A non-commutative frieze over a commutative set is
a classic frieze, thus $\mathcal{C}$ is a Conway-Coxeter frieze or a twisted Conway-Coxeter frieze
(for the classification of non-zero friezes over $\mathbb{Z}$ see \cite{Fontaine}, or 
\cite[Section 4]{CHP24} for an alternative proof). 
\smallskip

We now consider the case $a\ge -3$ and $b\ge -3$, 
i.e. $a,b\in \{-3,-2,-1\}$. By Example 
\ref{ex:ncfriezes-1-2-3} we have non-commutative 
friezes of heights 1 or 3 in which $\mathrm{i}$
and $\mathrm{j}$ appear as entries. In particular,
$\mathrm{i}\in (a,b)^{\circ}_{\mathbb{Z}}$ and
$\mathrm{j}\in (a,b)^{\circ}_{\mathbb{Z}}$.
By Definition \ref{def:friezesubring} the frieze 
subring is the ring generated by all frieze 
entries. This implies that also
$\mathrm{k}=\mathrm{i}\cdot \mathrm{j}
\in (a,b)^{\circ}_{\mathbb{Z}}$. 
It follows that $(a,b)^{\circ}_{\mathbb{Z}}
= (a,b)_{\mathbb{Z}}$ for $a,b\in \{-3,-2,-1\}$. 
\end{proof} 

\medskip

\begin{Remark}
\begin{enumerate}
\item[{(a)}]
This theorem says that for $a\le -4$ and $b\le -4$ the only non-commutative friezes over
the Lipschitz quaternion ring $(a,b)_{\mathbb{Z}}$ are the Conway-Coxeter friezes and the 
twisted Conway-Coxeter friezes. 

Note the analogy to the friezes over rings of integers of imaginary quadratic number fields
(see \cite[Theorem 5.2]{CHP24}): there we had that for a negative square-free integer $d$
we get
$$\mathcal{O}_d^{\circ} = \left\{
\begin{array}{ll} \mathcal{O}_d & \text{ if $d\in \{-1,-2,-3,-7,-11\}$} \\
\mathbb{Z} & \text{ otherwise}
\end{array} \right.
$$
In both situations, we have some small cases where a classification of all friezes seems to be complicated
(although we know that there are only finitely many in each height), 
but in many cases 
the 
friezes over $\mathcal{O}_d$ for $d<0$ and the non-commutative friezes 
over $(a,b)_{\mathbb{Z}}$ for $a<0$ and $b<0$
are known. 
\item[{(b)}] The above theorem does not cover the 
case where exactly one of the (negative) integers
$a$ and $b$ is in $\{-3,-2,-1\}$. 
Computer experiments suggested that in this case the frieze 
subring is $\mathbb{Z}[\mathrm{i}]$ (if $a\in \{-3,-2,-1\}$)
or $\mathbb{Z}[\mathrm{j}]$ (if $b\in \{-3,-2,-1\}$). It seems
to be subtle to prove this as there are plenty of 
possible quiddity cycles (in the sense that the product
of the associated $\mu$-matrices is the negative identity
matrix) involving more than one of 
$\mathrm{i},\mathrm{j},\mathrm{k}$ but they all seem to 
provide zero entries in the corresponding frieze pattern,
which is not allowed. Based on our computer 
experiments we conjectured that 
$(a,b)_{\mathbb{Z}}^{\circ}$ is equal to 
$\mathbb{Z}[\mathrm{i}]$ (if $a\in \{-3,-2,-1\}$)
or $\mathbb{Z}[\mathrm{j}]$ (if $b\in \{-3,-2,-1\}$).
The Appendix contains a proof of this
conjecture found by ChatGPT 6.0 Pro.
\end{enumerate}
\end{Remark}

\section{Appendix}
The aim of this section is to settle the cases left
open in Theorem \ref{thm:abcirc}, that is, to  
determine the non-commutative frieze subrings
of the Lipschitz quaternion rings $(a,b)_{\mathbb{Z}}$
for the cases when one of $a,b$ is in $\{-1,-2,-3\}$
and the other is $\le -4$. 

The proof has been found by ChatGPT 6.0 Pro.
The presentation given below is a rewritten
and carefully checked version by the authors.  
\medskip

The main result of this Appendix is the following. 

\begin{Theorem}\label{thm:main-intro}
Let $a,b<0$ be integers.
\begin{enumerate}
\item If $a\in\{-1,-2,-3\}$ and $b\leq -4$, then
\[
 (a,b)_{\Z}^{\circ}=\Z[\mathrm{i}].
\]
\item If $b\in\{-1,-2,-3\}$ and $a\leq -4$, then
\[
 (a,b)_{\Z}^{\circ}=\Z[\mathrm{j}].
\]
\end{enumerate}
\end{Theorem}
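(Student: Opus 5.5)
The plan is to prove (1) and deduce (2) by symmetry. The map $\mathrm{i}\mapsto\mathrm{j}$, $\mathrm{j}\mapsto\mathrm{i}$, $\mathrm{k}\mapsto-\mathrm{k}$ is a ring isomorphism $(a,b)_{\Z}\to(b,a)_{\Z}$ onto the Lipschitz subring. It preserves the norm and sends non-commutative friezes with $1$'s on the boundary to such friezes. Hence it identifies the frieze subrings, and (2) follows from (1).

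For (1), the inclusion $\Z[\mathrm{i}]\subseteq(a,b)_{\Z}^{\circ}$ is immediate. Example \ref{ex:ncfriezes-1-2-3} gives, for each $a\in\{-1,-2,-3\}$, a frieze with $1$'s on the boundary containing $\mathrm{i}$ as an entry, and $\Z[\mathrm{i}]$ is generated by $\mathrm{i}$. The real work is the reverse inclusion: every entry of every such frieze lies in $\Z[\mathrm{i}]=\{\alpha+\beta\mathrm{i}\}$. I will use the decomposition $(a,b)_{\Q}=K\oplus K\mathrm{j}$ with $K=\Q(\mathrm{i})$, writing $q=x+y\mathrm{j}$ with $x,y\in\Z[\mathrm{i}]$ for Lipschitz elements. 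A direct computation gives $N(q)=N_K(x)-b\,N_K(y)$. Since $b\le-4$ and $N_K(y)\ge1$ for $y\neq0$, every element with $y\neq0$ has norm at least $2$. So all entries of norm $<2$ lie in $\Z[\mathrm{i}]$. By Corollary \ref{thm:quidditynorm2}, every frieze therefore has at least two quiddity entries in $\Z[\mathrm{i}]$ of norm $<2$. These form the finite list $\pm1,\pm\mathrm{i}$, together with $\pm1\pm\mathrm{i}$ when $a\in\{-1,-2\}$.

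I then argue by induction on the number $m$ of vertices, mimicking the proof of Theorem \ref{thm:abcirc}. For a small quiddity entry equal to $1$, or two non-neighbouring entries equal to $-1$, I reuse exactly the reductions (\ref{eq:prod1111}) and (\ref{eq:twistfrieze}). These give a frieze with $1$'s on the boundary on fewer vertices whose quiddity entries differ from the old ones by $\pm1$. By induction the smaller frieze lies in $\Z[\mathrm{i}]$, so the original quiddity sequence does too. Since $\Z[\mathrm{i}]$ is a commutative subring closed under the exchange relations, with inverses taken in $\Q(\mathrm{i})$, all entries lie in $\Q(\mathrm{i})\cap(a,b)_{\Z}=\Z[\mathrm{i}]$. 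The base cases are the quiddity cycles of length $2$ and $3$ from \cite[Example 6.11]{CHJ24}.

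The main obstacle is a frieze whose only small quiddity entries are among $\pm\mathrm{i}$ and $\pm1\pm\mathrm{i}$, where no reduction keeping all boundary values $1$ is available. For such an entry $c$ I would try the general reduction of \cite[Proposition 6.12]{CHJ24}, which removes a vertex at the cost of creating boundary values $c$ and $\bar c$ in $\Z[\mathrm{i}]$. I would then run the whole induction in the wider class of friezes whose boundary entries are units times small elements of $\Z[\mathrm{i}]$.

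The step I would try first, as an alternative or a complement, is a refinement of Lemma \ref{lem:cde} that tracks the $\mathrm{j}$-component. Multiplying row vectors $(u,v)$ by $\mu(c)$ with $c=x+y\mathrm{j}$ and $y\neq0$ contributes a term $v\,y\mathrm{j}$ whose norm is at least $2\lVert v\rVert$. I would aim to show that once a nonzero $\mathrm{j}$-part enters the partial products $(0,1)\prod\mu(c_r)$, the second coordinate strictly dominates. This would make the product $-I$ impossible unless every $y$ vanishes. Then every quiddity entry lies in $\Z[\mathrm{i}]$ and the conclusion follows as above. The computer-observed fact that candidate quiddity cycles mixing $\mathrm{i}$ and $\mathrm{j}$ always produce a zero entry suggests this growth argument must also use the invertibility of the length-$3$ diagonals, not only the $\mu$-matrix identity.
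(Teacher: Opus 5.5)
\textbf{There is a genuine gap: the case you flag as the main obstacle is the whole difficulty, and it is left open.} Several of your steps are correct: the symmetry argument, the inclusion $\Z[\mathrm{i}]\subseteq(a,b)_{\Z}^{\circ}$, the splitting $N(x+y\mathrm{j})=N_K(x)-b\,N_K(y)$, and the reductions at a quiddity entry $1$ or at two non-neighbouring entries $-1$. But they do not prove the theorem.

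Corollary \ref{thm:quidditynorm2} only supplies two quiddity entries of norm $<2$. These may be among $\pm\mathrm{i},\pm1\pm\mathrm{i}$, or one $-1$ together with such an entry. For these you have no reduction that keeps all boundary values equal to $1$.

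Your first remedy, passing to friezes with boundary values in $\Z[\mathrm{i}]$, does not close this case:
\begin{itemize}
\item Corollary \ref{thm:quidditynorm2} is then no longer available, since it rests on Lemma \ref{lem:etamatrices} with all boundary values $1$.
\item Any analogue would only control the entries $f^{-1}c$ of the matrices $\mu(c,d,e,f)$. The bound $\lVert c\rVert<2\lVert f\rVert$ with $\lVert f\rVert^2\in\{2,3\}$ no longer forces $c\in\Z[\mathrm{i}]$; take $c=\mathrm{j}$ and $b=-4$.
\item The boundary values compound under repeated reductions.
\end{itemize}

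Your second remedy aims at a false statement: the $\mu$-identity alone cannot force the $\mathrm{j}$-parts to vanish. From $\mu(1+s)\mu(1)^2=\begin{pmatrix}-1&0\\ s&-1\end{pmatrix}$ and the fact that $\mu(1)^3$ is the negative identity matrix, one gets
\[
\mu(1+t\mathrm{j})\,\mu(1)^2\,\mu(1-t\mathrm{j})\,\mu(1)^5=-\begin{pmatrix}1&0\\0&1\end{pmatrix}\qquad(t\in\Z).
\]
This is a formal quiddity cycle with all entries nonzero and $\mathrm{j}$-parts of unbounded norm. It fails to come from a frieze only because some continuants vanish, e.g.\ $K_2(1,1)=0$. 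So invertibility of all frieze entries must enter essentially, and your proposal has no mechanism for it.

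\textbf{The paper's missing idea is to deform the quiddity cycle rather than reduce it.}
\begin{itemize}
\item The additive map $\delta_t(x+y\mathrm{j})=x+ty\mathrm{j}$ induces a group endomorphism of $\GE_2((a,b)_{\Z})$ with $E(q)\mapsto E(\delta_t(q))$ (Lemma \ref{lem:dilation}). This uses the presentation from \cite{BJJKT}: Cohn's universal relations plus $(E(\bar z)E(z))^n=E(0)^2$ for $1<\lVert z\rVert=\sqrt n<2$. All these relations are preserved because units and elements of norm strictly between $1$ and $2$ lie in $\Z[\mathrm{i}]$.
\item Hence $(\delta_t(q_r))_r$ is a formal quiddity cycle for every $t$. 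Your example above is exactly such a family.
\item By Lemma \ref{lem:reconstruction}, it is a frieze whenever all proper cyclic continuants are nonzero.
\item If some $y_r\neq0$, then $\lVert\delta_t(q_r)\rVert\to\infty$. By Theorem \ref{lem:finite}, some continuant must vanish for every large $|t|$.
\item There are finitely many continuants, so one fixed continuant, a polynomial in the central variable $t$, vanishes for infinitely many $t$ and hence identically. This contradicts its being a nonzero entry of the original frieze at $t=1$.
\end{itemize}
This uses the invertibility of all entries at once and needs no case analysis of the small entries and no induction on $m$.
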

\smallskip

Let us first outline the proof strategy. 
By symmetry we can assume that $a\in\{-1,-2,-3\}$ and $b\leq-4$. We write the Lipschitz quaternion ring as 
$(a,b)_{\Z}=\Z[\mathrm{i}]\oplus\Z[\mathrm{i}]\mathrm{j}$.
The proof has three main steps.  

We first prove a converse to the usual matrix identity for a quiddity cycle:
a sequence $q_0,\ldots,q_{m-1}$ satisfying
\[
 \mu(q_0)\cdots\mu(q_{m-1})= - \begin{pmatrix} 1 & 0 \\
 0 & 1 \end{pmatrix},
 \qquad
 \mu(q)=\begin{pmatrix}0&-1\\1&q\end{pmatrix},
\]
reconstructs a frieze whenever all of its proper cyclic continuants are
nonzero. See Lemma \ref{lem:reconstruction}. 
The nonvanishing condition is essential because frieze entries must
be invertible, see Definition \ref{def:nclocal}.
\smallskip

Secondly, we
show that for each $t\in\mathbb{Z}$, 
the additive map
\[
\delta_t:(a,b)_{\mathbb{Z}}\to (a,b)_{\mathbb{Z}},\,\,\delta_t(x+y\mathrm{j})=x+t y\mathrm{j}
\]
induces a group homomorphism of the group 
$\GE_2((a,b)_{\Z})$ generated by the matrices 
$E(q)=\begin{pmatrix} q & 1 \\ -1 & 0 \end{pmatrix}$
and the invertible diagonal matrices. See Lemma 
\ref{lem:dilation}. 
This is not obvious since the map $\delta_t$ is 
not multiplicative. The proof of Lemma \ref{lem:dilation}
uses a presentation of the group 
$\GE_2((a,b)_{\mathbb{Z}})$ established in
\cite[Proposition~3.1]{BJJKT}, which consists of
Cohn's universal relations \cite{Cohn66} and one crucial additional
relation.
\smallskip

Finally, applying $\delta_t$ to a quiddity cycle gives a one-parameter family
of quiddity cycles.  If one quiddity entry had a nonzero
$\Z[\mathrm{i}]\mathrm{j}$-component, its norm would tend to infinity with $|t|$.  The norm
bound from Theorem \ref{lem:finite} then implies that every sufficiently
large parameter value must make some cyclic continuant vanish.  A polynomial
pigeonhole argument makes one continuant vanish identically, contradicting
the nonvanishing of the original frieze at $t=1$.

\bigskip

\noindent
{\em Step 1: Non-commutative friezes and continuants}

Let $D$ be a division ring, $\mathcal{P}$ an $m$-gon
for an integer $m\ge 3$ and let
$c:\mathrm{diag}(\mathcal{P})\to D^{\ast}$ be a 
non-commutative frieze (as in Definition 
\ref{def:nclocal}). 
We denote the vertices of $\mathcal{P}$ by
$0,1,\ldots,m-1$. The {\em quiddity cycle} of such a non-commutative frieze is
the sequence $(q_0,q_1,\ldots, q_{m-1})$ where
$q_r=c_{r-1,r+1}$ (indices to be read modulo $m$).

The non-commutative exchange relation for the quadrangle with vertices $r,s-1,s,s+1$ give the row-propagation 
formula
\begin{equation}\label{eq:propagation}
 c_{r,s+1}=c_{r,s}q_s-c_{r,s-1},
 \qquad c_{r,r}=0,\quad c_{r,r+1}=1.
\end{equation}  

The quiddity cycle of a non-commutative frieze satisfies
the matrix identity
\begin{equation}\label{eq:frieze-cycle}
 \mu(q_0)\mu(q_1)\cdots\mu(q_{m-1})=-\begin{pmatrix}
     1 & 0 \\ 0 & 1 \end{pmatrix}
\end{equation}
by \cite[Corollary~6.8]{CHJ24}.  We call any sequence 
$(q_o,q_1,\ldots,q_{m-1})$ satisfying
\eqref{eq:frieze-cycle} a \emph{formal quiddity cycle}.

\bigskip

For non-commuting variables define the right continuants by
\begin{equation}\label{eq:continuants}
 K_{-1}=0,\qquad K_0=1,\qquad
 K_\ell(x_1,\ldots,x_\ell)
 =K_{\ell-1}(x_1,\ldots,x_{\ell-1})x_\ell
  -K_{\ell-2}(x_1,\ldots,x_{\ell-2}).
\end{equation}

It is straightforward to check inductively that the 
$\mu$-matrices and the continuants are related by 
the formula
\begin{equation} \label{eq:mucontinuants}
\prod_{i=1}^{\ell} \mu(x_i) = \begin{pmatrix}
-K_{\ell-2}(x_2,\ldots,x_{\ell-1}) & 
-K_{\ell-1}(x_2,\ldots,x_{\ell}) \\
K_{\ell-1}(x_1,\ldots,x_{\ell-1}) &
K_{\ell}(x_1,\ldots,x_{\ell})
\end{pmatrix}    \mbox{\hskip0.4cm for all $\ell\ge 1$}.
\end{equation}

\begin{Lemma}[Reconstruction]\label{lem:reconstruction}
Let $D$ be a division ring, let $m\geq3$, and let
$(q_0,q_1,\ldots,q_{m-1})$ with $q_i\in D$ be a formal quiddity cycle.  Extend the sequence
periodically to $\mathbb{Z}$, and, for $a<b\leq a+m$, set
\begin{equation}\label{eq:candidate-entries}
 c_{a,a}=0,\qquad c_{a,b}
 =K_{b-a-1}(q_{a+1},\ldots,q_{b-1}).
\end{equation}
Suppose that
\begin{equation}\label{eq:proper-nonzero}
 c_{a,b}\neq0\qquad\text{whenever}\qquad 2\leq b-a\leq m-2.
\end{equation}
Then the $c_{a,b}$, read cyclically, define a
non-commutative frieze on the $m$-gon with all boundary values 1, with quiddity cycle $(q_0,q_1,\ldots,q_{m-1})$.
\end{Lemma}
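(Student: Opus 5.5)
The plan is to check directly that the array \eqref{eq:candidate-entries}, read cyclically, satisfies the axioms of Definition~\ref{def:nclocal}. The main tool is the dictionary \eqref{eq:mucontinuants} between products of $\mu$-matrices and continuants, together with the fact that $-I$ is central.

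\emph{Step 1: cyclic invariance of the cycle identity.} Since $-I$ commutes with every matrix, \eqref{eq:frieze-cycle} holds for every cyclic shift $(q_{a+1},\ldots,q_{a+m})$.

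\emph{Step 2: well-definedness and boundary values.} I apply \eqref{eq:mucontinuants} with $\ell=m$ to the shifted sequence and compare entries with $-I$. This gives:
\begin{itemize}
\item $K_{m-1}(q_{a+1},\ldots,q_{a+m-1})=0$, which says $c_{a,a+m}=0=c_{a,a}$, so the cyclic reading is consistent;
\item $K_{m-2}(q_{a+2},\ldots,q_{a+m-1})=1$, which says $c_{a+1,a}=c_{a+1,a+m}=1$;
\item $K_{m}(q_{a+1},\ldots,q_{a+m})=-1$.
\end{itemize}
Thus all boundary values are $1$, in both directions. By the recursion \eqref{eq:continuants} we also have $c_{a,a+2}=q_{a+1}$. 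The other quiddity value $c_{a+2,a}=K_{m-3}(q_{a+3},\ldots,q_{a+m-1})$ should equal $q_{a+1}$ as well. To see this, I will take the $(1,1)$-entry of the identity $\mu(q_{a+3})\cdots\mu(q_{a+m+1})=-\mu(q_{a+1})^{-1}\mu(q_{a+2})^{-1}$ (the cycle identity read from $a+3$ and rearranged), and evaluate the right-hand side using $\mu(q)^{-1}=\begin{pmatrix} q&1\\-1&0\end{pmatrix}$. Hence the local triangle relations \eqref{eq:localtriangle} reduce to $c_{r,r+2}=c_{r+2,r}$, and they hold. All entries with $2\le b-a\le m-2$ are nonzero by hypothesis \eqref{eq:proper-nonzero}, hence invertible in $D$, and the boundary entries equal $1$. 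So we do get a map into $D^\ast$ with the right quiddity cycle.

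\emph{Step 3: the local exchange relations (main obstacle).} With all boundary values $1$, the four relations for the quadrangle $r,r+1,s,s+1$ read, for example,
\[
c_{s,r}=c_{r+1,s+1}^{-1}+c_{s,r+1}\,c_{s+1,r+1}^{-1}\,c_{s+1,r}.
\]
This mixes "forward" and "backward" continuants and involves inverses, so it does not follow from the recursion alone. I would encode row $r$ by the vectors $(c_{r,s-1},c_{r,s})$. By \eqref{eq:propagation} these satisfy
\[
(c_{r,s},c_{r,s+1})=(c_{r,s-1},c_{r,s})\,\mu(q_s),
\]
and $(c_{r,r},c_{r,r+1})=(0,1)$. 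Hence the vector $(c_{r,s},c_{r,s+1})$ is the second row of $M_{r,s}:=\mu(q_{r+1})\cdots\mu(q_s)$.

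By the cyclic identity, $M_{r,s}M_{s,r+m}=-I$. This gives $M_{r,s}^{-1}=-M_{s,r+m}$, and the entries of $M_{s,r+m}$ are the backward continuants $c_{s,r},c_{s+1,r},\ldots$ via \eqref{eq:mucontinuants}. Writing out $M_{r,s}M_{s,r+m}=-I$ and $M_{s,r+m}M_{r,s}=-I$ entrywise yields bilinear ``Wronskian''-type identities linking $c_{r,s},c_{r,s+1},c_{r+1,s},c_{r+1,s+1}$ to $c_{s,r},c_{s+1,r},c_{s,r+1},c_{s+1,r+1}$.

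Solving these identities (using invertibility of the proper entries) should produce exactly the four local exchange relations. For each relation I will multiply by the appropriate inverse on the left or right, to turn it into a polynomial identity of that shape. The delicate point is choosing the right side for each inverse, since $D$ is noncommutative. Degenerate quadrangles, where $s=r+2$ or $s+1=r+m-1$ and so boundary entries appear, will be checked separately using Step 2.

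Once the local triangle and local exchange relations hold, Definition~\ref{def:nclocal} gives a non-commutative frieze with all boundary values $1$ and quiddity cycle $(q_0,\ldots,q_{m-1})$. Theorem~\ref{thm:friezepolygon} then supplies all the remaining relations.
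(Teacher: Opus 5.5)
Your proposal follows the paper's proof. The cycle identity gives the closure values; your $M_{r,s}$ is the paper's $P_{r,s+1}$, and $M_{r,s}^{-1}=-M_{s,r+m}$ is its $P_{r,s+1}^{-1}$. Comparing off-diagonal entries of both products gives relations like $EB=FD$, and the diagonal entries then give the four local exchange relations. The only entries inverted are $c_{r,s},c_{r+1,s+1},c_{s,r},c_{s+1,r+1}$, which are always proper, so degenerate quadrangles need no separate check.

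There is one slip in Step~2. The identity you quote is misindexed: $\mu(q_{a+3})\cdots\mu(q_{a+m+1})$ equals $-\mu(q_{a+2})^{-1}$, not $-\mu(q_{a+1})^{-1}\mu(q_{a+2})^{-1}$. Take instead the $(1,1)$-entry of $\mu(q_{a+2})\cdots\mu(q_{a+m})=-\mu(q_{a+1})^{-1}$. This gives $c_{a+2,a}=q_{a+1}$, exactly as in the paper's closure identities, and hence the local triangle relations.
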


\begin{proof}
The assertion is correct for $m=3$. In fact, the only 
formal quiddity cycle of length 3 is $(1,1,1)$ and 
(\ref{eq:candidate-entries}) gives $c_{a,b}=1$ 
for $a<b$, leading to the only non-commutative frieze 
on a triangle with boundary values 1. 

\smallskip

So we assume from now on that $m\geq 4$.
According to Definition \ref{def:nclocal}\,(c) we have to
show that the values $c_{a,b}$ assigned to the 
edges and diagonals of the $m$-gon as in the lemma
satisfy the local triangle relations and the 
local non-commutative exchange relations. 
\smallskip

We begin with the local triangle relations. 

Since the negative identity matrix is central, every cyclic rotation of a formal quiddity cycle is again
a formal quiddity cycle. Direct expansion using the 
definition of continuants in 
\eqref{eq:continuants} and (\ref{eq:mucontinuants})
gives, for $a+2\leq b\leq a+m$,
\begin{equation}\label{eq:continuant-matrix}
 P_{a,b}:= \mu(q_{a+1})\cdots \mu(q_{b-1})
 =\begin{pmatrix}
   -c_{a+1,b-1}&-c_{a+1,b}\\
    c_{a,b-1}&c_{a,b}
  \end{pmatrix}.
\end{equation}
In particular,
$\mu(q_a) P_{a,a+m}$ is the negative identity matrix 
(being a cyclic rotation of a formal quiddity cycle), 
hence
\[
 P_{a,a+m}=-\mu(q_a)^{-1}
 =\begin{pmatrix}-q_a&-1\\1&0\end{pmatrix}.
\]
Comparison with \eqref{eq:continuant-matrix} yields
\begin{equation}\label{eq:closure}
 c_{a,a+m-1}=c_{a+1,a+m}=1,
 \qquad c_{a,a+m}=0,
 \qquad c_{a+1,a+m-1}=q_a.
\end{equation}
Thus both orientations of every boundary edge have 
value 1 and the
assignment closes cyclically.

For $a+2\leq b\leq a+m-1$, set
\[
 Q_{a,b}:=\mu(q_b)\mu(q_{b+1})\cdots \mu(q_{a+m}).
\]
Then $P_{a,b}Q_{a,b}$ is the negative of the identity matrix, so $P_{a,b}^{-1}=-Q_{a,b}$.  Applying the same
continuant expansion to $Q_{a,b}=P_{b-1,a+m+1}$ as for
(\ref{eq:continuant-matrix}) gives
\[
 P_{a,b}^{-1}
 =\begin{pmatrix}
    c_{b,a+m}&c_{b,a+m+1}\\
   -c_{b-1,a+m}&-c_{b-1,a+m+1}
  \end{pmatrix}.
\]
Reading the subscripts cyclically, we obtain
\begin{equation}\label{eq:inverse-matrix}
 P_{a,b}^{-1}
 =\begin{pmatrix}
    c_{b,a}&c_{b,a+1}\\
   -c_{b-1,a}&-c_{b-1,a+1}
  \end{pmatrix}.
\end{equation}

The lower-left entry of the identity matrix 
$P_{a,b}P_{a,b}^{-1}$ gives
\begin{equation}\label{eq:weak-triangle}
 c_{a,b-1}c_{b,a}=c_{a,b}c_{b-1,a}.
\end{equation}
Because $c_{b,b-1}=c_{b-1,b}=1$, this is precisely the triangle relation for
the vertices $a,b-1,b$ (see Definition \ref{def:ncpolygon}). 
As $a$ and $b$ vary, all weak local triangle
relations (as in \cite[Definition 3.4]{CHJ24}) hold; in particular, all local triangle relations hold.  
\smallskip

We next verify the local non-commutative exchange
relations.

We fix numbers $r<r+1<s<s+1<r+m$ and abbreviate
\[
\begin{array}{llll}
 A=c_{r+1,s},&B=c_{r+1,s+1},&C=c_{r,s},&D=c_{r,s+1},\\
 E=c_{s+1,r},&F=c_{s+1,r+1},&G=c_{s,r},&H=c_{s,r+1}.
\end{array}
\]
Equations \eqref{eq:continuant-matrix} and \eqref{eq:inverse-matrix} read
\[
 P_{r,s+1}=\begin{pmatrix}-A&-B\\ C&D\end{pmatrix},
 \qquad
 P_{r,s+1}^{-1}=\begin{pmatrix}E&F\\-G&-H\end{pmatrix}.
\]
The off-diagonal entries in the identity matrix
$P_{r,s+1}P_{r,s+1}^{-1}=P_{r,s+1}^{-1}P_{r,s+1}$ imply
\begin{align*}
 CE&=DG,& EB&=FD,& AF&=BH,& GA&=HC,\label{eq:off-diagonal-basic}\\
 EG^{-1}&=C^{-1}D,& F^{-1}E&=DB^{-1},&
 HF^{-1}&=B^{-1}A,& G^{-1}H&=AC^{-1}
\end{align*}
where the equations in the second row can be obtained from the 
first row by suitable multiplications.
From this, the four diagonal entries give
$$
 C=F^{-1}+DB^{-1}A,\,\,
 G=B^{-1}+HF^{-1}E,\,\,
 B=G^{-1}+AC^{-1}D,\,\,
 F=C^{-1}+EG^{-1}H.
$$
These are exactly the four local non-commutative
exchange relations for the quadrangle
with vertices $r,r+1,s,s+1$ after inserting its four boundary values, all equal to one.
Every element inverted here is nonzero by assumption
\eqref{eq:proper-nonzero} or is a
boundary value. 
\smallskip

As all local triangle relations and all local 
non-commutative exchange relations hold, the 
reconstructed assignment is
a frieze.  Its quiddity is $(q_0,q_1,\ldots, q_{m-1})$ by
\eqref{eq:candidate-entries}.
\end{proof}
\medskip

\noindent
{\em Step 2: The group homomorphism $\Delta_t$ on
the group $\mathrm{GE}_2((a,b)_{\mathbb{Z}})$.}

We continue to assume that
$ a\in\{-1,-2,-3\}$ and $b\leq-4$
and consider again
the decomposition 
$(a,b)_{\mathbb{Z}} = \mathbb{Z}[\mathrm{i}]\oplus
\mathbb{Z}[\mathrm{i}]\mathrm{j}$.
For all $x\in \mathbb{Z}[\mathrm{i}]$ we have
\begin{equation}\label{eq:twisting}
 \mathrm{j}x=\overline{x}\mathrm{j}.
\end{equation}
In fact, write $x=\alpha + \beta \mathrm{i}$. Then
$$\mathrm{j}x = \alpha \mathrm{j}- \beta \mathrm{k}
= (\alpha-\beta \mathrm{i})\mathrm{j} = \overline{x}
\mathrm{j}.
$$

For $t\in\Z$ define the {\em additive transverse dilation}
by setting
$$ \delta_t:(a,b)_{\mathbb{Z}}\to (a,b)_{\mathbb{Z}},
 \,\,\,\delta_t(x+y\mathrm{j})=x+t y\mathrm{j}
 \text{\hskip0.4cm for $x,y\in \mathbb{Z}[\mathrm{i}]$}.
$$

We consider the matrices
$E(q):=\begin{pmatrix}q&1\\-1&0\end{pmatrix}$;
they are related to the $\mu$-matrices by the equation
$\mu(q)=E(q)^{-1}$. Moreover, we write 
$[u,v]$ for the diagonal matrix with diagonal 
entries $u$ and $v$. 

The group $\GE_2((a,b)_{\mathbb{Z}})$ is defined as the group generated by the matrices 
$E(q)$ together with the invertible diagonal matrices.

\begin{Lemma}[Transverse dilation]\label{lem:dilation}
For every $t\in\Z$ there is a group homomorphism
\[
 \Delta_t:\GE_2((a,b)_{\mathbb{Z}})\longrightarrow
 \GE_2((a,b)_{\mathbb{Z}})
\]
such that
\[
 \Delta_t(E(q))=E(\delta_t(q))\quad(q\in (a,b)_{\mathbb{Z}})\,\,\,
 \text{~~~and~~~}\,\,\,
 \Delta_t([u,v])=[u,v]\quad(u,v\in 
 (a,b)_{\mathbb{Z}}^{\ast}).
\]
Consequently, every formal quiddity cycle $(q_0,\ldots,q_{m-1})$ over
$(a,b)_{\mathbb{Z}}$ gives another formal quiddity cycle
$(\delta_t(q_0),\ldots,\delta_t(q_{m-1}))$.
\end{Lemma}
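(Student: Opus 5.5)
The approach is to use a presentation of $\GE_2((a,b)_{\Z})$ by generators and relations, namely the one from \cite[Proposition~3.1]{BJJKT}. It consists of Cohn's universal relations \cite{Cohn66} together with one additional relation. We define $\Delta_t$ on generators by $E(q)\mapsto E(\delta_t(q))$ and $[u,v]\mapsto[u,v]$, and check that every defining relation is sent to a valid identity in $\GE_2((a,b)_{\Z})$. Since $a\in\{-1,-2,-3\}$ and $b\le -4$, the only units of $(a,b)_{\Z}$ are the norm-one elements, which lie in $\Z[\mathrm{i}]$: they are $\pm1$, together with $\pm\mathrm{i}$ when $a=-1$. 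In particular every unit is fixed by $\delta_t$.

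Cohn's universal relations are the following, for $x,y$ in the ring and units $\alpha,\beta$:
\[
E(x)E(0)E(y)=-E(x+y),\qquad E(\alpha)E(\alpha^{-1})E(\alpha)=-[\alpha,\alpha^{-1}],\qquad E(x)[\alpha,\beta]=[\beta,\alpha]E(\beta^{-1}x\alpha),
\]
plus the relations among diagonal matrices.

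\begin{itemize}
\item The first relation is preserved because $\delta_t$ is additive and $\delta_t(0)=0$.
\item The second involves only units, which are fixed by $\delta_t$, so it is mapped to itself.
\item For the third, we need $\delta_t(\beta^{-1}x\alpha)=\beta^{-1}\delta_t(x)\alpha$ for units $\alpha,\beta$. Write $x=x_1+x_2\mathrm{j}$ with $x_1,x_2\in\Z[\mathrm{i}]$. Since $\alpha,\beta\in\Z[\mathrm{i}]$, equation (\ref{eq:twisting}) gives $\beta^{-1}x\alpha=\beta^{-1}x_1\alpha+\beta^{-1}x_2\overline{\alpha}\,\mathrm{j}$, which is again of the form $x_1'+x_2'\mathrm{j}$ with coefficients in $\Z[\mathrm{i}]$. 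Hence $\delta_t$ commutes with left and right multiplication by units, i.e.\ it is $\Z[\mathrm{i}]$-bimodule compatible in the twisted sense.
\item The diagonal relations are untouched.
\end{itemize}

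The main obstacle is the additional non-universal relation in \cite[Proposition~3.1]{BJJKT}. I expect it to be a relation that expresses the failure of the ring to be ``discretely normed'' in Cohn's sense, coming from small-norm elements that involve only $\mathrm{i}$. It is probably something like $E(\mathrm{i})E(-\mathrm{i})\cdots$ for $a=-2,-3$, or $E(1+\mathrm{i})\cdots$ when $a=-1$. I would first check whether all entries appearing in it lie in $\Z[\mathrm{i}]$, since $\delta_t$ is the identity on $\Z[\mathrm{i}]$; in that case the relation is mapped to itself and there is nothing to do. If instead the relation involves $\mathrm{j}$-components, I would verify it directly: expand both sides as $2\times2$ matrices, substitute $y\mapsto ty$ in the $\Z[\mathrm{i}]\mathrm{j}$-parts, and use (\ref{eq:twisting}) together with $\mathrm{j}^2=b$. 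The product of two $\mathrm{j}$-parts is scaled by $t^2$, so one must check that such terms cancel. That is where the real computation lies.

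Given that the relations are preserved, $\Delta_t$ is a well-defined homomorphism. For the consequence, $\mu(q)=E(q)^{-1}$, so $\prod\mu(q_r)=-I$ implies $\prod E(q_{m-1-r})\cdots=-I$ in inverted order. Note that $-I=[-1,-1]$ is diagonal and hence fixed by $\Delta_t$. Applying $\Delta_t$ gives $\prod\mu(\delta_t(q_r))=-I$, so $(\delta_t(q_0),\ldots,\delta_t(q_{m-1}))$ is again a formal quiddity cycle.
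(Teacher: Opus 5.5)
Your overall strategy is the paper's: use the presentation of $\GE_2((a,b)_{\Z})$ from \cite[Proposition~3.1]{BJJKT} and check that $\Delta_t$ respects every defining relation. Your handling of Cohn's universal relations is correct, and so is the deduction about formal quiddity cycles. However, you leave the decisive step open: you only guess at the extra relation and give a conditional plan.

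In \cite[Proposition~3.1]{BJJKT} the extra relations are
\[
\bigl(E(\overline{z})E(z)\bigr)^n=E(0)^2 \qquad\text{for all } z \text{ with } 1<\lVert z\rVert=\sqrt{n}<2 .
\]
The missing idea is that every such $z$ lies in $\Z[\mathrm{i}]$. Your guess that these come from elements involving only $\mathrm{i}$ is right, but it is exactly what has to be proved. Write $z=x+y\mathrm{j}$ with $x,y\in\Z[\mathrm{i}]$. Then $N(z)=x\overline{x}-b\,y\overline{y}$. If $y\neq 0$, then $y\overline{y}\ge 1$, so $N(z)\ge -b\ge 4$ and $\lVert z\rVert\ge 2$. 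Hence $\delta_t$ fixes $z$ and $\overline{z}$, and each extra relation is mapped to itself. This is the only place where the full hypothesis $b\le -4$ is used; for the units, $b\le -2$ would already suffice.

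Your fallback of expanding a relator with $\mathrm{j}$-components and hoping the $t^2$-terms cancel cannot work, because such relators are genuinely not preserved. For instance, if $b=-2$, then $z=\mathrm{j}$ has norm $\sqrt{2}$ and gives the valid relation $\bigl(E(-\mathrm{j})E(\mathrm{j})\bigr)^2=E(0)^2$. Yet $\Delta_0$ would send the left side to $E(0)^4$, the identity matrix, and the right side to $E(0)^2$, its negative. So the argument must show that no relator has a $\mathrm{j}$-component, rather than verify that such relators survive. With the norm estimate above inserted, your proof is complete and coincides with the paper's.
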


\begin{proof}
Recall the norm form on $(a,b)_{\mathbb{Z}}$
from Definition \ref{def:norm}. For an arbitrary
element $x+y\mathrm{j}\in \mathbb{Z}[\mathrm{i}]
\oplus \mathbb{Z}[\mathrm{i}]\mathrm{j}$
(with $x,y\in \mathbb{Z}[\mathrm{i}])$, the norm 
form can quickly be computed to satisfy
\begin{equation}\label{eq:norm-splitting}
 N(x+yj)=x\overline{x}-b\,y\overline{y}.
\end{equation}
If $q\in (a,b)_{\mathbb{Z}}^{\ast}$ is a unit, 
then $N(q)=1$ (since the positive integers $N(q)$ 
and
$N(q^{-1})$ have product 1).  Since $b\leq -4$, formula
\eqref{eq:norm-splitting} forces the 
$\mathbb{Z}[\mathrm{j}]$-component of $q$ to vanish.  Hence
\begin{equation*}\label{eq:units-in-C}
 (a,b)_{\mathbb{Z}}^{\ast} =
 \mathbb{Z}[\mathrm{i}]^{\ast}
 =\begin{cases}
   \{\pm1,\pm i\},& \text{if~}a=-1,\\
   \{\pm1\},& \text{if~}a=-2,-3.
  \end{cases}
\end{equation*}
The same argument shows that
\begin{equation}\label{eq:short-in-C}
 1<\lVert q\rVert<2\quad\Longrightarrow\quad 
 q\in \mathbb{Z}[\mathrm{i}],
\end{equation}
because then $N(q)\in\{2,3\}$.  The map $\delta_t$ therefore fixes every
unit and every element of norm strictly between one and two, as well as its
quaternionic conjugate.

For $u,v\in (a,b)_{\mathbb{Z}}^{\ast}=
\mathbb{Z}[\mathrm{i}]^{\ast}$ and 
$q=x+yj\in \mathbb{Z}[\mathrm{i}]\oplus \mathbb{Z}[\mathrm{i}]\mathrm{j}=(a,b)_{\mathbb{Z}}$, equation
\eqref{eq:twisting} gives
\begin{equation}\label{eq:unit-equivariance}
 \delta_t(uqv)
 =\delta_t(uxv+uy\bar vj)
 =uxv+tuy\bar vj
 =u\delta_t(q)v.
\end{equation}

We now invoke \cite[Proposition~3.1]{BJJKT}.  It states that, for an order in
a totally definite rational quaternion algebra, a complete presentation of the group
$\GE_2$ consists of Cohn's universal relations 
(i.e. relations \cite[(R1)-(R4)]{BJJKT} together with 
the relations in $D_2((a,b)_{\mathbb{Z}})$, the multiplicative group of invertible $2\times 2$-matrices
over $(a,b)_{\mathbb{Z}}$; see \cite{Cohn66}) 
plus the relation
\begin{equation}\label{eq:short-relators}
 \bigl(E(\bar z)E(z)\bigr)^n=E(0)^2
 \qquad\text{for }1<\lVert z\rVert=\sqrt n<2.
\end{equation}
The function $\Delta_t$ in the statement preserves the additive universal relation (R1),
\[
\hskip1cm E(x)E(0)E(y)=E(0)^2E(x+y),
\]
because $\delta_t$ is additive.  It preserves all universal relations
of types (R2) (for a unit $x$),
$$E(x)E(x^{-1})E(x) = E(0)^2[x,x^{-1}]
$$
and (R4), 
$$E(0)^2 = [-1,-1]
$$
as well as the relations in
$D_2((a,b)_{\mathbb{Z}}$, 
because zero and each unit is fixed by $\delta_t$.  It preserves the remaining
universal relation (R3),
\[
 E(x)[u,v]=[v,u]E(v^{-1}xu)
\]
by \eqref{eq:unit-equivariance}.  
Finally, $\delta_t$ maps each extra relation
\eqref{eq:short-relators} to itself by
\eqref{eq:short-in-C} (because $\delta_t(z)=z$ for all
$z\in \mathbb{Z}[\mathrm{i}]$).  The complete presentation therefore yields that the
map $\Delta_t$ is a group homomorphism.

For the final statement, if 
$\mu(q_0)\cdots\mu(q_{m-1})$ is the negative identity
matrix, apply $\Delta_t$, use
$\mu(q)=E(q)^{-1}$, and note that 
$E(0)^2=-\begin{pmatrix} 1 & 0 \\ 0 & 1 \end{pmatrix}$ is fixed.  This gives that also 
\[
 \mu(\delta_t(q_0))\cdots\mu(\delta_t(q_{m-1}))
\]
is the negative identity matrix, as required.
\end{proof}

\begin{Remark}
The complete presentation is crucial in Lemma~\ref{lem:dilation}.
The map $\delta_t$ is generally not a ring homomorphism, and entrywise
application of $\delta_t$ to a matrix is not a matrix-group homomorphism.
\end{Remark}

\smallskip

\noindent
{\em Step 3: The frieze subring}

If the quiddity entries $q_r=q_r(t)$ depend polynomially on a central variable $t$, then every
candidate entry in \eqref{eq:candidate-entries} is likewise a polynomial in
$t$.  Lemma~\ref{lem:reconstruction} says that the only obstruction to
obtaining a genuine non-commutative frieze from a formal 
quiddity cycle is the vanishing of one of the
finitely many proper cyclic continuants.

\bigskip

\begin{Proposition}\label{thm:main}
Assume $a\in \{-1,-2,-3\}$ and $b\le -4$.  Every 
non-commutative frieze over $(a,b)_{\mathbb{Q}}$ with
all boundary entries 1
whose
entries lie in the Lipschitz subring $(a,b)_{\mathbb{Z}}\setminus\{0\}$ has all its entries in $\mathbb{Z}[\mathrm{i}]$.
Consequently, the frieze subring is
\[
 (a,b)_{\mathbb{Z}}^{\circ}=\mathbb{Z}[\mathrm{i}].
\]
\end{Proposition}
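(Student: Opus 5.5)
Let $\mathcal{C}=(c_{r,s})$ be a non-commutative frieze on an $m$-gon with all boundary entries $1$ and entries in $(a,b)_{\mathbb{Z}}\setminus\{0\}$. Let $(q_0,\ldots,q_{m-1})$ be its quiddity cycle and write $q_r=x_r+y_r\mathrm{j}$ with $x_r,y_r\in\mathbb{Z}[\mathrm{i}]$. The plan is to show that every $y_r=0$. Once this holds, the row-propagation formula \eqref{eq:propagation} expresses every entry $c_{r,s}$ as a continuant in the $q$'s, so all entries lie in the commutative subring $\mathbb{Z}[\mathrm{i}]$. For the reverse inclusion $\mathbb{Z}[\mathrm{i}]\subseteq(a,b)_{\mathbb{Z}}^{\circ}$, Example \ref{ex:ncfriezes-1-2-3} gives friezes containing $\mathrm{i}$ for $a\in\{-1,-2,-3\}$, so $\mathrm{i}\in(a,b)^{\circ}_{\mathbb{Z}}$. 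Together these give $(a,b)_{\mathbb{Z}}^{\circ}=\mathbb{Z}[\mathrm{i}]$.

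Suppose for contradiction that some $y_{r_0}\neq 0$. By Lemma \ref{lem:dilation}, for every $t\in\mathbb{Z}$ the sequence $q(t):=(\delta_t(q_0),\ldots,\delta_t(q_{m-1}))$ is again a formal quiddity cycle over $(a,b)_{\mathbb{Z}}$. Let $t$ be an indeterminate commuting with everything. Since $\mathrm{j}$ commutes with the integer $t$, each candidate entry $c_{u,v}(t)=K_{v-u-1}(\delta_t(q_{u+1}),\ldots,\delta_t(q_{v-1}))$ is a polynomial in $t$ with coefficients in $(a,b)_{\mathbb{Z}}$. Equivalently, its four coordinates in the basis $1,\mathrm{i},\mathrm{j},\mathrm{k}$ are integer polynomials in $t$. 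At $t=1$ these are the entries of $\mathcal{C}$, which are nonzero. Hence each proper continuant ($2\le v-u\le m-2$, finitely many up to cyclic period) is a nonzero polynomial and has only finitely many integer roots.

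Now take $|t|$ larger than all these roots. Then every proper continuant of $q(t)$ is nonzero, and Lemma \ref{lem:reconstruction} turns $q(t)$ into a genuine non-commutative frieze with all boundary entries $1$ and entries in $(a,b)_{\mathbb{Z}}\setminus\{0\}$. Theorem \ref{lem:finite} applies with $P=1$ and $M\ge 1$, since nonzero elements of $(a,b)_{\mathbb{Z}}$ have integer norm form at least $1$. It bounds every quiddity entry by $1+n$, with $n=m-3$, a bound independent of $t$. On the other hand, by \eqref{eq:norm-splitting},
\[\lVert\delta_t(q_{r_0})\rVert^2=x_{r_0}\overline{x_{r_0}}-b\,t^2\,y_{r_0}\overline{y_{r_0}},\]
which tends to infinity as $|t|\to\infty$ because $-b>0$ and $y_{r_0}\neq0$. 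This is a contradiction, so every $y_r=0$.

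The main obstacle is making the polynomial-in-$t$ claim rigorous: one must check that $\delta_t$ applied to the quiddity entries really makes each continuant a polynomial whose value at $t=1$ is the original entry. This is immediate because $t$ is a central integer scalar multiplying the $\mathrm{j}$-parts, and continuants are built by ring operations. No pigeonhole argument is needed beyond ``a nonzero polynomial has finitely many roots''. All the genuinely hard work is in Lemma \ref{lem:dilation}, which is assumed.
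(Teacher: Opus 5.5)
Your proposal is correct and follows the paper's proof essentially step by step: the decomposition $q_r=x_r+y_r\mathrm{j}$, the dilated formal quiddity cycles from Lemma \ref{lem:dilation}, the reconstruction Lemma \ref{lem:reconstruction}, the norm bound of Theorem \ref{lem:finite}, and polynomiality of the continuants in $t$. Your ``each proper continuant is a nonzero polynomial, hence has finitely many integer roots'' is the same finiteness argument as the paper's pigeonhole step. The only thing to add is the trivial case $m=3$: there the quiddity entries are boundary entries equal to $1$, and Theorem \ref{lem:finite} (which needs $n\ge 1$) is only invoked for $m\ge 4$.
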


\begin{proof}
Let $c$ be such a non-commutative frieze on an $m$-gon, 
and let $(q_0,\ldots,q_{m-1})$ be its quiddity cycle.  If $m=3$
there are only boundary entries, so assume $m\geq 4$. 
Each quiddity entry is in $(a,b)_{\mathbb{Z}}=\mathbb{Z}[\mathrm{i}]\oplus \mathbb{Z}[\mathrm{i}]\mathrm{j}$, so
we write
\[
 q_r=x_r+y_r \mathrm{j}\qquad(x_r,y_r\in \mathbb{Z}[\mathrm{i}]).
\]
For every $t\in\Z$, put
\[
 q_r(t)=\delta_t(q_r)=x_r+t y_r \mathrm{j}.
\]
By Lemma~\ref{lem:dilation}, the sequence $(q_r(t))$ is a formal quiddity
cycle for every $t$.

Use \eqref{eq:candidate-entries} to form all proper cyclic continuants
$c_{u,v}(t)$ from this formal quiddity cycle.  Each is an $(a,b)_{\mathbb{Q}}$-valued polynomial in the
central variable $t$, with coefficients in $(a,b)_{\mathbb{Z}}$.  At $t=1$, propagation
\eqref{eq:propagation} shows that these candidate values are exactly the
entries of the original frieze.  In particular,
\begin{equation}\label{eq:nonzero-at-one}
 c_{u,v}(1)\neq0
 \qquad(2\leq v-u\leq m-2).
\end{equation}

Suppose, for a contradiction, that $y_r\neq0$ for some $r$.  Formula
\eqref{eq:norm-splitting} gives
\begin{equation}\label{eq:growing-norm}
 \lVert q_r(t)\rVert^2=x_r\bar x_r-bt^2y_r\bar y_r,
\end{equation}
so $\lVert q_r(t)\rVert\to\infty$ as $|t|\to\infty$.  In particular,
$\lVert q_r(t)\rVert>m-2$ for every sufficiently large $|t|$.

For each such integer $t$, at least one proper cyclic continuant must vanish.
Indeed, if they were all nonzero, Lemma~\ref{lem:reconstruction} would turn
the formal quiddity cycle into a non-commutative frieze over
$(a,b)_{\mathbb{Z}}\setminus\{0\}$, contrary to the 
norm-bound in 
Theorem \ref{lem:finite} and
\eqref{eq:growing-norm}.  There are only finitely many proper cyclic
continuants.  Hence one fixed polynomial $c_{u,v}(t)$ vanishes for infinitely
many integers $t$.  Expressing its coefficients in the $\Q$-basis
$1,\mathrm{i},\mathrm{j},\mathrm{k}$ of 
$(a,b)_{\mathbb{Q}}$ shows that each of its four scalar coordinate polynomials
vanishes identically.  Thus $c_{u,v}(t)$ is the zero polynomial, which
contradicts \eqref{eq:nonzero-at-one}.

We conclude that $y_r=0$ for every $r$, so every quiddity entry lies in $\mathbb{Z}[\mathrm{i}]$.
The propagation formula \eqref{eq:propagation}, starting from $0$ and $1$,
then shows that every frieze entry lies in 
$\mathbb{Z}[\mathrm{i}]$.  This proves
$(a,b)_{\mathbb{Z}}^{\circ}\subseteq \mathbb{Z}[\mathrm{i}]$.

For the reverse inclusion, the boundary entries show that
$\mathbb{Z}\subseteq (a,b)_{\mathbb{Z}}^{\circ}$.  The 
non-commutative friezes in Example \ref{ex:ncfriezes-1-2-3}
show that $\mathrm{i}$ is a frieze entry in each of
the remaining cases: for $a=-1$ use the height-one frieze with alternating
entries $2\mathrm{i},-\mathrm{i}$; for $a=-2$ use the height-one frieze with alternating
entries $\mathrm{i},-\mathrm{i}$; and for $a=-3$ use the height-three frieze whose quiddity
entries alternate between $\mathrm{i}$ and $-\mathrm{i}$ and whose length-three entries are
$2$.  Hence $\mathbb{Z}[\mathrm{i}]\subseteq
(a,b)_{\mathbb{Z}}^{\circ}$, completing the proof.
\end{proof}

Interchanging $\mathrm{i}$ and $\mathrm{j}$ in Proposition~\ref{thm:main} proves the second half of
Theorem~\ref{thm:main-intro}.

Combining Theorem~\ref{thm:main-intro} with
Theorem \ref{thm:abcirc} gives the full answer for all non-commutative frieze subrings of Lipschitz quaternion 
rings $(a,b)_{\mathbb{Z}}$ with negative
integral parameters.

\begin{Theorem}\label{cor:classification}
Let $a,b<0$ be integers.  Then
$$(a,b)_{\mathbb{Z}}^{\circ} = \left\{
\begin{array}{ll} \mathbb{Z} & \mbox{ if $a\le -4$ and $b\le -4$}, \\
(a,b)_{\mathbb{Z}} & \mbox{ if $a\ge -3$ and $b\ge -3$}, \\
\mathbb{Z}[\mathrm{i}] & \mbox{ if $a\in \{-1,-2,-3\}$
and $b\le -4$}, \\
\mathbb{Z}[\mathrm{j}] & \mbox{ if $a\le -4$ and $b\in \{-1,-2,-3\}$}.
\end{array} \right.
$$
\end{Theorem}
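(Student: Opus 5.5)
This is essentially a bookkeeping result. I will split the pairs $(a,b)$ of negative integers into four disjoint regions and quote the earlier result that settles each region. The regions are:
\begin{itemize}
\item both $a\le -4$ and $b\le -4$;
\item both $a,b\in\{-1,-2,-3\}$;
\item $a\in\{-1,-2,-3\}$ and $b\le -4$;
\item $a\le -4$ and $b\in\{-1,-2,-3\}$.
\end{itemize}
Since $a,b<0$ are integers, each of $a,b$ lies in either $\{-1,-2,-3\}$ or $\{\dots,-5,-4\}$, so these four regions are exhaustive and pairwise disjoint. In particular the four cases in the statement never give conflicting values.

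For the first two regions I would invoke Theorem \ref{thm:abcirc} directly. It gives $(a,b)_{\mathbb{Z}}^{\circ}=\mathbb{Z}$ when $a\le -4$ and $b\le -4$. It gives $(a,b)_{\mathbb{Z}}^{\circ}=(a,b)_{\mathbb{Z}}$ when $a\ge -3$ and $b\ge -3$, that is, when $a,b\in\{-1,-2,-3\}$.

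For the mixed region $a\in\{-1,-2,-3\}$, $b\le -4$, I would cite Theorem \ref{thm:main-intro}(1), equivalently Proposition \ref{thm:main}, which yields $\mathbb{Z}[\mathrm{i}]$. For the other mixed region I would cite Theorem \ref{thm:main-intro}(2), which yields $\mathbb{Z}[\mathrm{j}]$.

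The only point that deserves a sentence is the second mixed region. The paper obtains it by interchanging $\mathrm{i}$ and $\mathrm{j}$, so I would justify that swap explicitly. The map $\mathrm{i}\mapsto\mathrm{j}$, $\mathrm{j}\mapsto\mathrm{i}$, $\mathrm{k}\mapsto-\mathrm{k}$ (since $\mathrm{j}\mathrm{i}=-\mathrm{k}$) gives a ring isomorphism $(a,b)_{\mathbb{Z}}\cong(b,a)_{\mathbb{Z}}$. This isomorphism transports non-commutative friezes with $1$'s on the boundary, and hence carries frieze subrings onto each other. There is no real obstacle beyond this check.
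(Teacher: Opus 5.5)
Your proposal is correct and matches the paper, which proves this theorem simply by combining Theorem~\ref{thm:abcirc} (for $a,b\le -4$ and for $a,b\in\{-1,-2,-3\}$) with Theorem~\ref{thm:main-intro} (for the two mixed cases). Your check that $\mathrm{i}\mapsto\mathrm{j}$, $\mathrm{j}\mapsto\mathrm{i}$, $\mathrm{k}\mapsto-\mathrm{k}$ is a ring isomorphism $(a,b)_{\mathbb{Z}}\cong(b,a)_{\mathbb{Z}}$ that transports friezes with $1$'s on the boundary makes explicit the ``interchange $\mathrm{i}$ and $\mathrm{j}$'' step, which the paper leaves to the reader.
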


\bigskip

\noindent
{\bf Statement on the use of AI.}
The paper has been largely written without the use of 
AI. Only at the very final stages we have involved
ChatGPT 6.0 Pro. The model confirmed our conjecture on the 
frieze subrings in the cases $a\in \{-1,-2,-3\}$
and $b\le -4$, or vice versa, see Theorem \ref{cor:classification}. The proof the model provided
is documented in the Appendix; it has been carefully 
checked and partially rewritten by the authors. In 
connection with another project on commutative friezes ChatGPT suggested a better bound and a simpler proof
of a commutative version of Theorem \ref{lem:finite}.
We used the idea to generalize it to the non-commutative
setting which is now included here. Claude Fable 5.1
Max assisted our proofreading.
In this process, Claude observed a small gap in the 
proof of Theorem \ref{thm:abcirc} and suggested 
an easy fix
which we included.
The authors take full responsibility for all aspects 
of the content and the correctness of the paper. 

\bigskip

\noindent
{\bf Acknowledgement.} The third author was supported 
by Aarhus University Research Foundation, grant 
number AUFF-E-2024-9-43.


\end{document}